\def\crulefill{\leavevmode\leaders\hrule height 1pt\hfill\kern 0pt}
\long\def\QUERY#1{%
\leavevmode\newline%
\noindent$\star\star\star$\thinspace\textsf{Comment/Query}\crulefill\newline%
   \space #1\newline\hbox to 120mm{\crulefill}$\star\star\star$\newline
}
 \numberwithin{equation}{section}
\theoremstyle{definition}
\newtheorem{number1}[equation]{}
\newtheorem{Defn}[equation]{Definition}
\newtheorem{Remark}[equation]{Remark}
\theoremstyle{plain}
\newtheorem{Prop}[equation]{Proposition}
\newtheorem{Theorem}[equation]{Theorem}
\newtheorem{Assumption}[equation]{Assumption}
\newtheorem{Lemma}[equation]{Lemma}
\newtheorem{Cor}[equation]{Corollary}
\def\enumerate{\begingroup\ifnum\@enumdepth>3\@toodeep\else
      \advance\@enumdepth\@ne
      \edef\@enumctr{enum\romannumeral\the\@enumdepth}%
      \topsep\z@\parskip\z@
      \list{\csname label\@enumctr\endcsname}
        {\@nmbrlisttrue\let\@listctr\@enumctr
         \parsep\z@\itemsep\z@\topsep\z@
         \setcounter{\@enumctr}{0}
         \def\makelabel##1{\hss\llap{\rm ##1}}
       }\fi}
\let\epsilon=\varepsilon
\def\({\big(}
\def\){\big)}
\def\bfs{\mathbf s}
\def\bft{\mathbf t}
\def\bfu{\mathbf u}
\def\bfv{\mathbf v}
\def\t{\mathfrak t}
\def\u{\mathfrak u}
\def\v{\mathfrak v}
\def\ss{\mathbf s}
\def\m{\mathfrak m}
\def\floor#1{\lfloor\tfrac#1\rfloor}
\def\UPD{\mathscr{T}^{ud}}
\def\Std{\mathscr{T}^{s}}
\def\SStd{\mathscr{T}^{ss}}
\def\bfS{\mathbf S}
\def\N{\mathbb N}
\def\Z{\mathbb Z}
\def\0{\underline{0}}
\def\bu{\mathbf u}
\def\B{\mathscr B}
\def\Bcal{\mathfrak B}
\def\Dcal{\mathcal D}
\def\Ef{{\mathcal E}_f}
\def\G{\mathcal G}
\def\H{\mathscr H}
\let\proj=\varepsilon
\def\Nrf{\mathbb N_r^{f, n}}
\def\Nrl{\mathbb N_r^{\ell, n-1}}
\def\Sym{\mathfrak S}
\def\W{\mathscr B}
\def\Wlam{\W_{r,n}^{\gdom(f, \lambda)}}
\def\simk{\overset{k}\sim}
\def\simn{\overset{n-1}\sim}
\DeclareMathOperator{\Rad}{Rad}
\def\F{\mathcal F}
\def\G{\mathcal G}
\let\gdom\rhd
\let\gedom\unrhd
\def\m{\mathfrak m}
\def\floor#1{\lfloor\tfrac#1\rfloor}
\def\UPD{\mathscr{T}^{ud}}
\def\Std{\mathscr{T}^{std}}
\def\Wmuieq{S^{\gedom\mu_\lambda(i)}}
\def\ss{\mathfrak s}
\def\tt{\mathfrak t}
\def\uu{\mathfrak u}
\def\vv{\mathfrak v}
\def\s{\mathbf s}
\def\t{\mathbf t}
\def\u{\mathbf u}
\def\v{\mathbf v}
\def\bfs{\mathbf s}
\def\bft{\mathbf t}
{\catcode`\|=\active
  \gdef\set#1{\mathinner{\lbrace\,{\mathcode`\|"8000%
                                   \let|\midvert #1}\,\rbrace}}
  \gdef\seT#1{\mathinner{\Big\lbrace\,{\mathcode`\|"8000%
                                   \let|\midverT #1}\,\Big\rbrace}}
}
\def\midvert{\egroup\mid\bgroup}
\def\midverT{\egroup\,\Big|\,\bgroup}
\def\Set[#1]#2|#3|{\Big\{\ #2\ \Big| \
           \vcenter{\hsize #1mm\centering #3}\Big\}}
\def\map#1#2{\,{:}\,#1\!\longrightarrow\!#2}
\title{The representations of cyclotomic BMW algebras, II}
\author{Hebing Rui}
\address{H. Rui, Department of Mathematics, East China Normal University, %
         200241 Shanghai, P.R. China.}
\email{hbrui@math.ecnu.edu.cn}
\author{Mei Si  }
\address{M. Si, Department of Mathematics, Shanghai Jiaotong University, %
         200240, Shanghai, P.R. China.}
\email{simeism@hotmail.com}
\thanks{The first author is supported in part by
NSFC and NCET-05-0423.}
\date{July 25, 2008}
\begin{document}
\sloppy \maketitle


\begin{abstract} In this paper, we  go on Rui-Xu's  work on
 cyclotomic Birman-Wenzl algebras
$\W_{r, n}$ in \cite{RX}. In particular, we use the representation
theory of cellular algebras in \cite{GL} to classify the irreducible
$\W_{r, n}$-modules for all positive integers $r$ and $n$.  By
constructing  cell filtrations for all cell modules of $\W_{r, n}$,
we compute the discriminants associated to all  cell modules for
$\W_{r, n} $.  Via such discriminats together with induction and
restriction functors given in section~5, we determine explicitly
when  $\W_{r, n}$ is  semisimple over a field.  This generalizes our
previous result on  Birman-Wenzl algebras in \cite{RS1}.
\end{abstract}

\section{Introduction}

Let $\W_{r, n}$ be the  cyclotomic Birman-Wenzl algebras defined
in~\cite{HO:cycBMW}. Motivated by Ariki, Mathas and Rui's work on
cyclotomic Nazarov--Wenzl algebras~\cite{AMR}, Rui and Xu~\cite{RX}
proved that $\W_{r, n}$ is cellular over $R$ for all positive odd
integers $r$ under the so-called $\u$-admissible conditions (see the
assumption~\ref{admiss}). Moreover, they  have classified the
irreducible $\W_{r, n}$-modules.

In this paper, we will prove that $\W_{r, n}$ is cellular over $R$
for all positive integers $r$ under the $\u$-admissible conditions.
By using arguments in \cite{RX}, we classify the irreducible $\W_{r,
n}$-modules over an arbitrary field. This completes the
classification of irreducible $\W_{r, n}$-modules over a field. We
remark that Yu~\cite{Yu1} first  proved that $\W_{r, n}$ is cellular
over $R$ under the similar conditions. However, she did assume that
the parameter $\omega_0$, which is  given in  Definition~\ref{waff},
is invertible when she proved that $\W_{r, n}$ is cellular.

Given a cell module $M$ of $\W_{r, n}$.  Following \cite{RS1}, we
construct a $\W_{r, n-1}$-filtration for $M$. Via it, we  construct
an $R$-basis for $M$, called JM-basis in the sense of \cite{M:semi}.
This enables us to use standard arguments in \cite{M:semi} to
construct an orthogonal basis for $M$ under so called
\textsf{separate condition} in the sense of \cite{M:semi}. The key
is that the Gram determinants associated to $M$ which are defined by
the JM-basis and the previous orthogonal basis are the same. We will
give a recursive formula to compute the later determinant.

Motivated by \cite{DWH},  we  construct restriction functor $\F$ and
induction functor $\G$ which set up a relationship between the
category of $\W_{r, n}$-modules and the category of $\W_{r,
n-2}$-modules. Via $\F$ and $\G$ together with certain explicit
formulae on Gram determinants, we determine explicitly when $\W_{r,
n}$ is semisimple over a field.

We organize the paper as follows. In Section~2, we prove that
$\W_{r, n}$ is cellular over $R$ for all positive integers $r$ and
$n$. We also classify the irreducible $\W_{r, n}$--modules. In
section~3, we construct the JM-basis and an orthogonal basis for
each cell module of $\W_{r, n}$. In section~4, we compute the
discriminants associated to   all cell modules of $\W_{r, n}$.
Restriction functor $\F$ and induction functor  $\G$ will be
constructed in section~5. In section~6, we  determine explicitly
when $\W_{r, n}$ is semisimple over an arbitrary field.

\section{The  cyclotomic Birman-Wenzl algebras}

Throughout the paper, we fix  two positive integers $r$ and $n$. Let
$R$ be a commutative ring which contains the identity $1$ and
invertible elements $q^{\pm 1}$, $u_1^{\pm 1}, \dots, u_r^{\pm 1}$,
$\varrho^{\pm 1}, \delta^{\pm 1}$ such that $\delta=q-q^{-1}$ and
$\omega_0=1-\delta^{-1}(\varrho-\varrho^{-1})$.

\begin{Defn}\cite{HO:cycBMW}\label{waff} The  cyclotomic Birman-Wenzl algebra $\W_{r, n}$
is the  unital associative $R$--algebra generated  by $\set{T_i,E_i,
X_j^{\pm 1}|1\le i<n \text{ and }1\le j\le n } $ subject to the
following relations:
\begin{enumerate}
\item $X_i X_{i}^{-1}=X_{i}^{-1}X_i=1$ for $1\le i\le n$.
    \item (Kauffman skein relation )
$1=T_i^2-\delta T_i +\delta \varrho E_i$, for $1\le i<n$.
    \item (braid relations)
\begin{enumerate}
\item $T_iT_j=T_jT_i$ if $|i-j|>1$,
\item $T_iT_{i+1}T_i=T_{i+1}T_iT_{i+1}$, for $1\le i<n-1$,
\item $T_iX_j=X_jT_i$ if $j\ne i,i+1$.
\end{enumerate}
    \item (Idempotent relations)
$E_i^2=\omega_0E_i$, for $1\le i<n$.
    \item (Commutation relations)  $X_iX_j=X_jX_i$, for $1\le i,j\le n$.

\item (Skein relations)
\begin{enumerate}\item      $T_iX_i-X_{i+1}T_i=\delta X_{i+1}
(E_i-1)$,   for $1\le i<n$,
    \item     $X_iT_i-T_iX_{i+1}=\delta (E_i-1) X_{i+1}$, for $1\le i<n$.
    \end{enumerate}
    \item (Unwrapping relations)
        $E_1X_1^aE_1=\omega_a E_1$, for $a\in \mathbb Z$.
    \item (Tangle relations)
\begin{enumerate}
\item $E_iT_i=\varrho E_i=T_iE_i$, for $1\le i\le n-1$,

\item $E_{i+1}E_i=E_{i+1}T_iT_{i+1}=T_iT_{i+1} E_i$, for $1\le i\le n-2$.
\end{enumerate}

    \item (Untwisting relations)\begin{enumerate} \item
        $E_{i+1}E_iE_{i+1}=E_{i+1}$  for $1\le i\le n-2$,
        \item $E_iE_{i+1}E_i=E_i$, for $1\le i\le
        n-2$.\end{enumerate}
    \item (Anti--symmetry relations) $E_iX_iX_{i+1}=E_i=X_iX_{i+1}E_i$, for $1\le i<n$.
\item (Cyclotomic relation) $(X_1-u_1)(X_1-u_2)\cdots(X_1-u_r)=0$
\end{enumerate}
\end{Defn}

For each $x\in R$, let   $$\gamma_r(x)=\begin{cases} 1, & \text{if
$2\nmid r$,}\\
-x, &\text{if $2\mid r$}.\\
\end{cases}
$$

In the remainder of this paper, We  use $\u$ (resp. $\Omega$) to
denote $(u_1, u_2, \dots, u_r)$ (resp. $\{\omega_a\mid a\in \mathbb
Z\}$). In order to show that  $\W_{r, n}$ is free over $R$, Rui and
Xu  introduced the \textsf{$\u$-admissible conditions} in
\cite[3.15]{RX} as follows.

\begin{Assumption}\label{admiss}  $\Omega\cup\{\varrho\}$ is called
\textsf{$\u$-admissible} if $$\varrho^{- 1}= \alpha \prod_{\ell=1}^r
u_{\ell}, \text{ and } \omega_{a} = \sum\limits_{j=1}^r
u_{j}^{a}\gamma_{j}, \forall a\in \mathbb Z$$  where
\begin{itemize} \item [(1)] $\gamma_{i} = (\gamma_r(u_i) + \delta^{-1}\varrho(u_{i}^{2}
- 1) \prod \limits_{j\neq i}u_{j})\prod\limits_{j\neq
i}\frac{u_{i}u_{j} - 1}{u_{i} - u_{j}}$,
\item [(2)] $\alpha\in \{1, -1\}$ if $2\nmid r$ and $\alpha\in \{q^{-1}, -q\}$, otherwise.
\item[(3)]
 $\omega_{0}=
\delta^{-1}\varrho(\prod\limits_{\ell=1}^r u_{\ell}^2 - 1) +
1-\frac{(-1)^r +1}{2}\alpha^{-1}\varrho^{-1} $.
\end{itemize}\end{Assumption}

Note that there are infinite equalities in the definition of
$\u$-admissible conditions in Assumption~\ref{admiss}. It has been
proved  in \cite[3.17]{RX} that $\omega_j, \forall j\in\mathbb Z,$
are determined by $\omega_i$, $0\le i\le r-1$. Furthermore, all
$\omega_i$  are elements in $\mathbb Z[u_1^{\pm 1}, \dots, u_r^{\pm
1}, q^{\pm 1}, \delta^{-1}]$ \cite[3.11]{RX}. Therefore,
  $\omega_i\in R$ for all $ i\in \mathbb
Z$ if they are given in the Assumption~\ref{admiss}.

\textsf{In the remainder  of this  paper, unless otherwise stated,
we always keep the Assumption~\ref{admiss} when we discuss $\W_{r,
n}$ over $R$. }

It has been proved in  \cite{RX} that  $\W_{r, n}$ is a free
$R$-module with rank $r^n (2f-1)!!$ when $r$ is odd. We will prove
that $\W_{r, n}$ is  cellular over $R$ with rank $r^n (2f-1)!!$ when
$r$ is  even.  We start by recalling the definition of Ariki-Koike
algebras in \cite{AK}.

The  Ariki-Koike algebra ~\cite{AK} $\H_{r,n}(\bu):=\H_{r, n}$  is
the unital associative $R$-algebra generated by $y_1, \dots, y_n $
and $g_1, g_2, \dots, g_{n-1}$ subject to the following relations:
\begin{enumerate}
\item $(g_i-q)(g_i+q^{-1})=0$, if $1\le i\le n-1$,
\item $g_ig_j=g_jg_i$, if $|i-j|>1$,
\item $g_ig_{i+1}g_i=g_{i+1}g_ig_{i+1}$, for $1\leq i<n-1$,
\item $g_iy_j=y_jg_i$, if $j\neq i,i+1$,
\item $y_iy_j=y_jy_i$, for $1\leq i,j\leq n$,
\item $y_{i+1}=g_iy_ig_i$,
for $1\leq i\leq n-1$,
\item $(y_1-u_1)(y_1-u_2)\dots(y_1-u_r)=0$.
\end{enumerate}

\medskip
\def\Ef{{\mathcal E}}
Let $\Ef_n=\W_{r,n}E_1\W_{r,n}$ be the two-sided ideal of $\W_{r,
n}$ generated by~$E_1$. It is proved in \cite[5.2]{RX} that
  $\H_{r,n}\cong\W_{r,n}/\Ef_n$. The corresponding $R$-algebraic
isomorphism is determined by
$$\proj_{n}: g_i \longmapsto T_i+\Ef_n, \text{ and }\quad y_j\longmapsto X_j+\Ef_n,$$
for $1\le i<n$ and $1\le j\le n$.
\medskip

Let $\mathfrak S_n$ be the symmetric group on $\{1, 2, \dots, n\}$.
Then $\mathfrak S_n$ is generated by $s_i=(i, i+1)$, $1\le i\le
n-1$. If $w=s_{i_1}\cdots s_{i_k}\in \mathfrak S_n$ is a reduced
expression of $w$, then we write   $T_w=T_{i_1}T_{i_2}\cdots
T_{i_k}\in \W_{r, n}$.  It has been pointed out in \cite{RX} that
$T_w$ is independent of a reduced expression of $w$. We denote by
\begin{equation}\label{nr} \N_r=\left\{i\in \mathbb Z\mid
-\lfloor\frac r 2\rfloor+\frac 12 (1+(-1)^r) \le i\le \lfloor \frac
r 2\rfloor\right\}.\end{equation}

Given a non-negative integer $f$ with  $f\le \lfloor n/2\rfloor$.
Following \cite[5.5]{RX}, we define  \begin{equation} \label{co1}
\Dcal_{f, n}=\Set[40]s_{n-2f+1,i_f}s_{n-2f+2,j_f}\cdots
s_{n-1,i_1}s_{n,j_1}|
      $1\leq i_f<\cdots<i_1\leq n, \atop 1\leq i_k<j_k\leq n-2k+2, 1\leq k\leq f$
      |, \end{equation}
      where $$s_{i,j}=\begin{cases}  s_{i-1}s_{i-2}\cdots s_j, & \text{if
$i>j$,}\\
s_is_{i+1}\cdots s_{j-1}, &\text{if $i<j$,}\\
1, & \text{if $i=j$.}\\
\end{cases}
$$
 Let $\Bcal_f\subset \mathfrak S_n$ be the subgroup  generated
by
 $s_{n-2i+2}
         s_{n-2i+1}s_{n-2i+3} s_{n-2i+2}$, $2\le i\le f$, and
         $s_{n-1}$.
Then $\Dcal_{f, n}$ is a right coset representatives for $\mathfrak
S_{n-2f}\times\Bcal_f$ in $\mathfrak S_{n}$ (see e.g. \cite{RX}).

 For each
$d=s_{n-2f+1,i_f}s_{n-2f+2,j_f}\cdots
      s_{n-1,i_1}s_{n,j_1}\in \Dcal_{f, n}$,
let $\kappa_d$ be the
      $n$-tuple $(k_1,\dots,k_n)$ such that $k_i\in\N_r$ and
$k_i\ne0$ only for $i=i_1,i_2,\dots,i_f$. Note that $\kappa_d$ may
be equal to $\kappa_e$ although $e\neq d$ for $e, d\in \Dcal_{f,
n}$. We set $X^{\kappa_d}=\prod_{i=1}^n X_i^{\kappa_i}$. By
Definition~\ref{waff},
\begin{equation}\label{td}
T_dX^{\kappa_d}=T_{n-2f+1,i_f}X_{i_f}^{\kappa_{i_
f}}T_{n-2f+2,j_f}\cdots T_{n-1,i_1}X_{i_1}^{\kappa_{i_1}}T_{n,j_1},
\end{equation} where $T_{i, j}=T_{s_{i, j}}$.
For convenience, let
  \begin{equation}\label{nrf} \Nrf =\{\kappa_d \mid d\in \Dcal_{f,
  n}\}.\end{equation}

Recall that  a \textsf{composition}  $\lambda$ of $m$ is a sequence
of non--negative integers $(\lambda_1,\lambda_2,\dots)$ such that
$|\lambda|:=\lambda_1+\lambda_2+\cdots=m$. $\lambda$ is called a
\textsf{partition} if $\lambda_i\ge \lambda_{i+1}$ for all positive
integers $i$. Similarly, an \textsf{$r$-partition} (resp.
$r$-composition)  of $m$ is an ordered $r$--tuple
$\lambda=(\lambda^{(1)},\dots,\lambda^{(r)})$ of partitions (resp.
compositions)  $\lambda^{(s)}$, $1\le s\le r$,  such that
$|\lambda|:=|\lambda^{(1)}|+\dots+|\lambda^{(r)}|=m$. In the
remainder of this paper, we use multipartitions (resp.
multicompositions) instead of $r$--partitions (resp.
$r$-compositions). Let $\Lambda_r^+(m)$ (resp. $\Lambda_r(m)$ ) be
the set of all multipartitions (resp. multicompositions) of $m$.

It is known that both $\Lambda_r^{+}( m)$ and $\Lambda_r(m)$ are
posets with the dominance order $\unrhd$ defined on them.  We have
$\lambda\trianglelefteq\mu$ if  $$\sum_{j=1}^{i-1} |\lambda^{(j)}|
+\sum_{k=1}^l \lambda_k^{(i)} \le \sum_{j=1}^{i-1}
|\mu^{(j)}|+\sum_{k=1}^l \mu_k^{(i)} $$ for  $1\le i\le r$ and $l\ge
0$. We write $\lambda\vartriangleleft\mu$ if
$\lambda\trianglelefteq\mu$ and $\lambda\neq \mu$. Let $$\Lambda_{r,
n}^+=\{(k, \lambda)\mid 0\le k\le \lfloor n/2\rfloor, \lambda\in
\Lambda_r^+(n-2k)\}.$$  Then  $\Lambda_{r, n}^+$ is a poset with
$\unrhd$ as the  partial order on it.
 More explicitly, $(k, \lambda)\unrhd(\ell, \mu)$
for $(k, \lambda), (\ell, \mu)\in \Lambda_{r, n}^+$ if either
$k>\ell$ in the usual sense or $k=\ell$ and $\lambda\unrhd\mu$. Here
$\unrhd$ is the dominance order defined on  $\Lambda_r^+(n-2k)$.

The Young diagram $Y(\lambda)$ of a partition $\lambda=(\lambda_1,
\lambda_2, \cdots)$ is a collection of boxes arranged in
left-justified rows with $\lambda_i$ boxes in the $i$-th row of
$Y(\lambda)$. A $\lambda$-tableau $\t$  is obtained from
$Y(\lambda)$ by inserting $\{1, \dots, n\}$ into each box of
$Y(\lambda)$ without repetition. If the entries in $\t$ increase
from left to right in each row and from top to bottom in each
column, then $\t$ is called a standard $\lambda$-tableau.

If $\lambda=(\lambda^{(1)}, \dots, \lambda^{(r)})\in
\Lambda_r^+(n)$, then the  Young diagram $Y(\lambda)$ is an ordered
Young diagrams $(Y(\lambda^{(1)}),  \dots, Y(\lambda^{(r)}))$. In
this case, a $\lambda$-tableau $\bft$ is $(\t_1, \dots, \t_r)$ where
each $\t_i, 1\le i\le r$ is  a  $\lambda^{(i)}$-tableau. If the
entries in each $\t_i$ increase from left to right in each row and
from top to bottom in each column, then $\t$ is called standard.
Let $\Std(\lambda)$ be the set of all standard $\lambda
$-tableaux.

Suppose $\lambda\in \Lambda_r^+(n)$.  It is well-known that
$\Std(\lambda)$ is a poset with dominance order $\unrhd$  on it. For
each $\bfs\in \Std(\lambda)$ and a positive integer $i\le n$, let
$\bfs\downarrow_i$ be obtained from $\bfs$ by deleting  all entries
in $\bfs$ greater  than $i$. Let $\bfs_i$ be the multipartition of
$i$ such that $\bfs\!\!\downarrow_i$ is the $\bfs_i$-tableau. Then
$\bfs\trianglerighteq \bft$ if and only if
$\bfs_i\trianglerighteq\bft_i$ for all $i, 1\le i\le n$. Write
$\bfs\rhd \bft$ if $\bfs\trianglerighteq\bft$ and $\bfs\neq \bft$.

It is well-known that $\mathfrak S_n$ acts on a $\lambda$-tableau by
permuting its entries. Let $\bft^\lambda$ be the $\lambda$-tableau
obtained from $Y(\lambda)$ by adding $1, 2, \cdots, n$ from left to
right along the  rows of $Y(\lambda^{(1)})$, $Y(\lambda^{(2)})$,
etc. For example, if $\lambda=((3,2),(2,1),(1,1))\in
\Lambda_3^+(10)$, then
$$\begin{picture}(0,0)(20,2.5)\put(-40,0){$\bft^\lambda=$}
\put(-15,0){(}
\put(-5,0){\framebox(11,11)[cc]{}\framebox(11,11){}\framebox(11,11)[cc]{}}\put(-5,-11){\framebox(11,11)[cc]{}\framebox(11,11)[cc]{}}
\put(48,0){\framebox(11,11)[cc]{}\framebox(11,11)[cc]{}}\put(48,-11){\framebox(11,11)[cc]{}}
\put(90,0){\framebox(11,11)[cc]{}}\put(90,-11){\framebox(11,11){}}\put(115,0){)}
\put(-1,1){1}\put(9,1){2}\put(20,1){3}\put(-1,-10){4}\put(9,-10){5}\put(52,1){6}\put(63,1){7}\put(52,-10){8}\put(94,1){9}\put(91,-10){10}
\put(35,-6){,}\put(77,-6){,}
\end{picture}$$
\medskip

Let $\mathfrak S_\lambda$ be the Young subgroup associated to the
multipartition $\lambda$. Then $\mathfrak S_\lambda$ is the row
stabilizer of $\bft^\lambda$.
 Let $a_i=\sum_{j=1}^i
        |\lambda^{(j)}|$, $1\le i\le r$ and $a_0=0$.
For each $\lambda$-tableau $\t$, there is a unique element, say
$d(\t)\in \mathfrak S_n$,  such that  $\t=\t^\lambda d(\t) $.
 Suppose that   $\s,\t\in \Std(\lambda)$ where $\lambda\in\Lambda_r^+(n-2f)$ for some non-negative
integer $f\le \floor{n2}$. It is defined in \cite[5.7]{RX} that
\begin{equation} \label{JMe} M_{\s\t}=T_{d(\s)}^\ast \cdot \prod_{s=2}^r
      \prod_{i=1}^{a_{s-1} }(X_i-u_s)
        \sum_{w\in\Sym_\lambda} q^{l(w)}T_w \cdot
        T_{d(\t)},\end{equation} where $\ast$  is the $R$-linear anti-involution on $\W_{r, n}$, which
fixes $T_i$ and $X_j$, $1\le i\le n-1$ and $1\le j\le n$. Note that
\begin{equation}\label{jme}\m_{\s\t}:=\epsilon^{-1}_{n-2f} (
M_{\s\t}+\Ef_n)\end{equation} is the Murphy basis element for
Ariki-Koike algebra $\H_{r, n-2f}$ in \cite{DJM:cyc}.

We define $M_\lambda=M_{\t^\lambda\t^\lambda}$ and  $E^{f,
n}=E_{n-1}E_{n-3}\cdots E_{n-2f+1}$ and $\W_{r,n}^f=\W_{r,n}E^{f,
n}\W_{r,n}$ for each non-negative integer $f\le \lfloor n/2\rfloor$.
Therefore,  there is  a filtration of two-sided ideals of $\W_{r,n}$
as follows:
\begin{equation}\label{filt}\W_{r,n}=\W_{r,n}^0\supset \W_{r,n}^1\supset\dotsi
     \supset \W_{r,n}^{\floor{n2}}\supset
     0.\end{equation}

\begin{Defn}\label{cellmodule} Suppose that $0\le
f\le\floor{n2}$ and   $\lambda\in \Lambda_r^+(n-2f)$. Define
$\W_{r,n}^{\unrhd(f, \lambda)}$ to be the two--sided ideal of
$\W_{r,n}$ generated by   $\W_{r,n}^{f+1}$ and $S$ where
$$S=\set{E^{f, n}M_{\s\t}|\s,\t\in \Std(\mu) \text{ and }
    \mu\in\Lambda_r^+(n-2f)\text{ with
    }\mu\trianglerighteq\lambda} .$$  We
also define $\W_{r, n}^{\rhd (f, \lambda)} =\sum_{\mu\gdom
\lambda}\W_{r,n}^{\gedom(f, \mu)}$, where in the sum
$\mu\in\Lambda_r^+(n-2f)$.\end{Defn}

By Definition~\ref{waff}, there is a natural homomorphism from
$\W_{r, m}$ to $\W_{r, n}$ for positive integers $m\le n$. Let
$\W_{r, m}'$ be the image of $\W_{r, m}$ in $\W_{r, n}$.
 The following result, which plays the key role,
has been proved by Yu without assuming that $\omega_0$ is
invertible~\cite{Yu1}.

\begin{Lemma}\label{Yu2.7} $N$ is a right $\W_{r,
n}$-module if   $N$ is the $R$-submodule generated by
$\W_{r,n-2f}'E^{f, n}T_dX^{\kappa_d}$, for all  $d\in\Dcal_{f,n}$
and $\kappa_d\in\Nrf$.\end{Lemma}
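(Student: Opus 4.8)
The plan is to show that the $R$-span $N$ of the set $\{\W_{r,n-2f}'E^{f,n}T_dX^{\kappa_d}\mid d\in\Dcal_{f,n},\ \kappa_d\in\Nrf\}$ is closed under right multiplication by each algebra generator of $\W_{r,n}$, namely by $T_i$, $E_i$ ($1\le i<n$) and by $X_j^{\pm1}$ ($1\le j\le n$). Since $N$ is visibly a left $\W_{r,n-2f}'$-module, and since $E^{f,n}$ only involves the generators $E_{n-1},E_{n-3},\dots,E_{n-2f+1}$ of $\W_{r,n}$, the essential point is to analyze how an arbitrary word $E^{f,n}T_dX^{\kappa_d}\cdot g$, for $g$ a generator, can be rewritten. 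The strategy is a straightforward but careful induction on the ``level'' $f$ together with an induction on the length $\ell(d)$ of the coset representative $d\in\Dcal_{f,n}$, pushing $g$ leftward through the word $T_dX^{\kappa_d}$ using the defining relations of Definition~\ref{waff}.

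First I would handle right multiplication by $X_j^{\pm1}$. Write $T_dX^{\kappa_d}$ in the factored form \eqref{td}; using the commutation relations 5 and the skein relations 6 one moves $X_j^{\pm1}$ to the left across the $T$-factors, each such move producing a correction term of the form (lower-length word)$\times(E_i-1)\times(\text{powers of }X)$. The $E_i$-terms that appear are precisely the ones occurring in the positions of $E^{f,n}$ (after conjugating by the relevant $T$'s, using the tangle relations 8 and anti-symmetry relations 10), so they are absorbed into $E^{f,n}$; the pure $X$-powers either land on a box already carrying an exponent in $\kappa_d$, where one reduces modulo the cyclotomic relation 11 (equivalently, using that $X_1^a$ for any $a\in\Z$ lies in the $R$-span of $X_1^{k}$, $k\in\N_r$, because the minimal polynomial of $X_1$ has degree $r$, and similarly for the other $X_i$ sitting at the relevant positions via conjugation), or they reach the leftmost part of the word where they enter $\W_{r,n-2f}'$. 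The key bookkeeping identity is the unwrapping relation 7, $E_1X_1^aE_1=\omega_aE_1$, together with the anti-symmetry relation $E_iX_iX_{i+1}=E_i$, which together force all the ``undesired'' powers that collide with an $E$ in $E^{f,n}$ to collapse to a scalar $\omega_a\in R$ times the same word. Right multiplication by $E_i$ and by $T_i$ is then treated by the same mechanism, using the braid relations 3, the Kauffman relation 2 (to replace $T_i^2$), the tangle and untwisting relations 8--9, and the fact that $\Dcal_{f,n}$ is a set of right coset representatives for $\mathfrak S_{n-2f}\times\Bcal_f$: when $dg$ stays in the same coset type one just rewrites $T_dT_i=T_{d'}$ or $T_dT_i=T_{d'}\pm\delta(\dots)$; when it drops to a shorter representative or when an $E_i$ is created it is absorbed into $E^{f,n}\W_{r,n-2f}'$ after moving it past the word (again via relations 8 and 9), possibly raising $f$, in which case one invokes the inductive hypothesis at level $f+1$ via the filtration \eqref{filt}.

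The main obstacle I expect is the combinatorial case analysis for right multiplication by $T_i$ and $E_i$ when $i$ is near the ``boundary'' positions $n-2k+1,n-2k+2$ that separate the $\mathfrak S_{n-2f}$-part from the $\Bcal_f$-part of the coset: there one must verify that the word $E^{f,n}T_dX^{\kappa_d}T_i$ really does rewrite as an $R$-combination of elements $E^{f,n}T_{d'}X^{\kappa_{d'}}$ with coefficients in $\W_{r,n-2f}'$ (left side), and controlling how the exponent tuple $\kappa_d$ transforms — in particular showing every new exponent can be normalized into $\N_r$ — is where the detailed use of relations 7, 10, and the cyclotomic relation 11 is needed. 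This is exactly the kind of argument carried out in \cite{RX} (and by Yu in \cite{Yu1}) for the odd-$r$ case; since none of the manipulations above used invertibility of $\omega_0$, the same reasoning applies verbatim for all $r$, which is what Lemma~\ref{Yu2.7} asserts. I would organize the write-up as: (i) reduce to checking right multiplication by generators; (ii) dispose of $X_j^{\pm1}$; (iii) dispose of $E_i$; (iv) dispose of $T_i$, each step citing the appropriate relation and, where a shorter word or higher level is produced, the inductive hypothesis.
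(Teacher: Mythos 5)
The paper does not actually prove Lemma~\ref{Yu2.7}: it is stated as a black box and attributed entirely to Yu's thesis~\cite{Yu1} (with the remark, emphasized in the text, that Yu proves it \emph{without} assuming $\omega_0$ invertible). So there is no proof in the paper for you to reproduce or compare against; what you have written is a plan for the proof that Yu carries out, not an alternative route.

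As a plan, your outline correctly identifies the strategy: $N$ is tautologically a left $\W'_{r,n-2f}$-module, so one checks closure under right multiplication by each generator, pushing $T_i$, $E_i$, $X_j^{\pm1}$ leftward through $T_d X^{\kappa_d}$ with the skein, tangle, untwisting and unwrapping relations, and inducting on $f$ and on $\ell(d)$. But all the load-bearing claims are asserted rather than verified, and they are genuinely delicate. (i) The claim that the $E_i$-correction terms produced by the skein relations ``are precisely the ones occurring in the positions of $E^{f,n}$'' and hence absorb into it is not obvious; a correction term $E_i$ can appear at a position $i$ not among $n-1,n-3,\dots,n-2f+1$, and showing it can be conjugated into $E^{f,n}$ or absorbed at level $f+1$ is the bulk of the work. (ii) Your normalization of exponents into $\N_r$ leans on a degree-$r$ polynomial relation for each $X_i$, but Definition~\ref{waff} only imposes the cyclotomic relation on $X_1$; for $i>1$ you must first conjugate, and conjugation by $T_i$ is not an algebra automorphism on the relevant words because $T_i^{-1}=T_i-\delta+\delta\varrho E_i$ spawns new $E$-terms. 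This is exactly the kind of bookkeeping that must be done explicitly. (iii) Your closing sentence conflates two different hypotheses: the Rui--Xu argument in~\cite{RX} assumes $r$ odd, whereas Yu's global assumption was invertibility of $\omega_0$; ``none of the manipulations used invertibility of $\omega_0$'' does not imply ``the same reasoning applies verbatim for all $r$'', since the admissibility data (the values of $\gamma_r$, $\alpha$, and the set $\N_r$) all change with the parity of $r$ and enter the reductions quantitatively. The assertion that the argument survives both generalizations at once is precisely the content of Yu's proof and cannot be waved through. In short: right skeleton, but the parts you flag as ``the main obstacle'' are not side difficulties — they are the proof, and they are left unexecuted.
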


\begin{Prop}(cf. \cite[5.10]{RX}) \label{key}Suppose that $\s\in \Std(\lambda)$. We define
$\Delta_\s(f,\lambda)$ to be the $R$-submodule of
$\W_{r,n}^{\unrhd(f, \lambda)}/\W_{r,n}^{\rhd(f, \lambda)}$  spanned
by
$$\set{E^{f, n}M_{\s\t}T_dX^{\kappa_d} +\W_{r,n}^{\rhd(f, \lambda)}|(\t, d, \kappa_d)\in\delta(f,\lambda)}, $$
where $\delta(f,\lambda)
  =\set{(\t, d, \kappa_d)|\t\in\Std(\lambda),d\in
                        \Dcal_{f,n}\text{ and }\kappa_d\in\Nrf
                        }$.
Then $\Delta_\s(f, \lambda)$ is a right $\W_{r, n}$-module.
\end{Prop}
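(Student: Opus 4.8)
The plan is to show that $\Delta_\s(f,\lambda)$ is closed under the right action of the generators $T_i$ ($1\le i<n$), $E_i$ ($1\le i<n$) and $X_j^{\pm1}$ ($1\le j\le n$) of $\W_{r,n}$, working throughout modulo $\W_{r,n}^{\rhd(f,\lambda)}$. The point of departure is Lemma~\ref{Yu2.7}: the $R$-submodule $N$ generated by $\W_{r,n-2f}'E^{f,n}T_dX^{\kappa_d}$ over all $d\in\Dcal_{f,n}$ and $\kappa_d\in\Nrf$ is already a right $\W_{r,n}$-module. Now $E^{f,n}M_{\s\t}T_dX^{\kappa_d}$ lies in $N$, since by \eqref{JMe} the element $M_{\s\t}$ is (up to the anti-involution applied to $T_{d(\s)}$, which only introduces factors $T_w$ with $w\in\Sym_n$, harmless here because $E^{f,n}$ commutes appropriately with $\W_{r,n-2f}'$) a product of $X_i-u_s$ factors and $\sum_{w\in\Sym_\lambda}q^{l(w)}T_w T_{d(\t)}$, all of which sit in $\W_{r,n-2f}'$ after we record that $d(\s),d(\t)$ and $\Sym_\lambda$ involve only the first $n-2f$ strands. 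Hence $\Delta_\s(f,\lambda)\subseteq N/\big(N\cap\W_{r,n}^{\rhd(f,\lambda)}\big)$, and the right action of $\W_{r,n}$ on $N$ descends. So it remains only to check that the \emph{specific} spanning set of $\Delta_\s(f,\lambda)$ is carried into its own $R$-span, i.e. that when we multiply a basis element $E^{f,n}M_{\s\t}T_dX^{\kappa_d}$ on the right by a generator $g$, the result is an $R$-linear combination of elements of the same shape $E^{f,n}M_{\s\t'}T_{d'}X^{\kappa_{d'}}$ with $(\t',d',\kappa_{d'})\in\delta(f,\lambda)$, modulo $\W_{r,n}^{\rhd(f,\lambda)}$.

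The bulk of the argument is then a straightening computation, and here I would follow \cite[5.10]{RX} closely. First treat the action of $T_i$ and $X_j^{\pm1}$ for indices touching only the ``$\Dcal_{f,n}$ part'' of the word, i.e. for $i,j>n-2f$: this is essentially the combinatorics of the coset representatives $\Dcal_{f,n}$ together with the Brauer-type relations (relations 8, 9, 10 of Definition~\ref{waff}) governing how $E_k$'s absorb $T_k$'s and $X_kX_{k+1}$'s; one shows $T_dX^{\kappa_d}\cdot g$ rewrites as a combination of $T_{d'}X^{\kappa_{d'}}$ (with $d'\in\Dcal_{f,n}$, $\kappa_{d'}\in\Nrf$) plus terms lying in the larger ideal $\W_{r,n}^{f+1}$ — and $\W_{r,n}^{f+1}\subseteq\W_{r,n}^{\rhd(f,\lambda)}$, so those vanish mod $\W_{r,n}^{\rhd(f,\lambda)}$. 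Second, for indices $i,j\le n-2f$, the generator $g$ commutes past $E^{f,n}$ and $T_dX^{\kappa_d}$ (using relations 3(c), 8, and the commutation of $E^{f,n}$ with $\W'_{r,n-2f}$) up to lower-order corrections, and then acts on $M_{\s\t}$ from the right inside $\W_{r,n-2f}'$; here $M_{\s\t}\cdot g$ is exactly the Murphy/Ariki–Koike straightening (cf.\ \eqref{jme} and \cite{DJM:cyc}), yielding $\sum_\t' r_{\t'} M_{\s\t'}$ plus a combination of $M_{\u\v}$ with $\u,\v\in\Std(\mu)$, $\mu\rhd\lambda$ — and the latter, once multiplied back by $E^{f,n}$ on the left, land in $\W_{r,n}^{\rhd(f,\lambda)}$ by Definition~\ref{cellmodule}. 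The mixed case, where $g=T_{n-2f}$ or involves $X_{n-2f},X_{n-2f+1}$ straddling the two regions, is handled by the same rewriting rules but needs the relation $E_iX_iX_{i+1}=E_i$ and the skein relations 6 to move $X$'s across the boundary.

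The main obstacle I expect is precisely this boundary case and, more generally, controlling the ``error terms'': one must verify that every time a relation of Definition~\ref{waff} is applied to create an extra $E_k$ with $k$ outside the range $\{n-2f+1,\dots,n-1\}$ (equivalently, to produce a word with $>f$ cups), the resulting element genuinely lies in $\W_{r,n}^{f+1}$, and that words with exactly $f$ cups but a dominating partition label genuinely lie in the span of the $E^{f,n}M_{\u\v}$ with $\mu\rhd\lambda$. This is a bookkeeping argument about how the two-sided ideals \eqref{filt} interact with the Murphy filtration of $\H_{r,n-2f}$, and it is where one actually uses that $\m_{\s\t}$ in \eqref{jme} is the Murphy basis of the Ariki–Koike algebra together with \cite[5.2]{RX}. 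Once these containments are in hand, closure under each generator is immediate, and since $\W_{r,n}$ is generated by the $T_i$, $E_i$ and $X_j^{\pm1}$, the proposition follows.
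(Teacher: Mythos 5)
Your opening observation is the right one, but you stop short of drawing the consequence that the paper actually uses. You correctly note that $E^{f,n}M_{\s\t}T_dX^{\kappa_d}=M_{\s\t}E^{f,n}T_dX^{\kappa_d}$ lies inside the right $\W_{r,n}$-module $N$ of Lemma~\ref{Yu2.7}. But the real payoff of that lemma is not ``the span sits inside a module''; it is that for an \emph{arbitrary} $h\in\W_{r,n}$ one can write $E^{f,n}T_dX^{\kappa_d}h=\sum_e a_e\,E^{f,n}T_eX^{\kappa_e}$ with coefficients $a_e\in\W_{r,n-2f}'$, so that
\[
E^{f,n}M_{\s\t}T_dX^{\kappa_d}h \;=\; \sum_e M_{\s\t}\,a_e\,E^{f,n}T_eX^{\kappa_e},
\]
and the entire problem collapses to understanding $M_{\s\t}\W_{r,n-2f}'E^{f,n}$ modulo $\W_{r,n}^{\rhd(f,\lambda)}$. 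The paper then cites \cite[5.8d]{RX} to identify this with $E^{f,n}\epsilon_{n-2f}(\m_{\s\t}\H_{r,n-2f})$ and applies the Murphy-basis straightening of \cite{DJM:cyc} in $\H_{r,n-2f}$, all in three lines, with no case analysis over the generators at all.

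Your proposal instead reverts to a generator-by-generator rewriting argument modeled on \cite[5.10]{RX}, and this is where a genuine gap appears. In case (ii), the assertion that a low-index generator $T_i$ or $X_j$ with $i,j\le n-2f$ ``commutes past $E^{f,n}$ and $T_dX^{\kappa_d}$ up to lower-order corrections'' is not correct as stated: the coset representatives $d\in\Dcal_{f,n}$ in (\ref{co1}) involve transpositions $s_k$ for arbitrarily small $k$, so $T_d$ does not commute with low-index $T_i$. Untangling the interaction of $h$ with $T_dX^{\kappa_d}$ is precisely what Lemma~\ref{Yu2.7} already encapsulates, so the case analysis you sketch would, once made rigorous, amount to reproving that lemma. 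You also never invoke \cite[5.8d]{RX}, which is the precise bridge needed to convert $M_{\s\t}\W_{r,n-2f}'E^{f,n}$ into the image of an Ariki--Koike element before the Murphy straightening can be applied; the mention of \cite[5.2]{RX} gestures in the right direction but does not cover this step. The ingredients (Lemma~\ref{Yu2.7}, the Murphy basis, the filtration) are all present in your outline, but they are not wired together into a complete argument, and the shortcut the paper takes --- one application of Lemma~\ref{Yu2.7} for a general $h$, followed by \cite[5.8d]{RX} and \cite{DJM:cyc} --- is both simpler and the intended route.
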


\begin{proof} By Lemma~\ref{Yu2.7},   $E^{f,
n}M_{\s\t}T_dX^{\kappa_d} h +\W_{r,n}^{\rhd(f, \lambda)}$ can be
written  as an $R$-linear combination of elements
$M_{\s\t}\W_{r,n-2f}'E^{f, n}T_{e}X^{\kappa_e}+\W_{r,n}^{\rhd(f,
\lambda)}$ for  $e\in \Dcal_{f,n}$ and $\kappa_e\in\Nrf$. By
\cite[5.8d]{RX},
$$M_{\s\t}\W_{r,n-2f}'E^{f, n}\equiv E^{f, n} \epsilon_{n-2f}
(\m_{\s\t} \H_{r, n-2f}) \pmod {\W_{r, n}^{\rhd (f, \lambda)}},$$
where $\m_{\s\t}$ is given in (\ref{jme}). Finally, using
Dipper-James-Mathas's result on Murphy basis for Ariki-Koike
algebras in \cite{DJM:cyc} yields
$$M_{\s\t}\W_{r,n-2f}'E^{f, n}T_{e}X^{\kappa_e}+\W_{r,n}^{\rhd(f,
\lambda)}\in \Delta_\s(f, \lambda) .$$ So,  $\Delta_\s(f, \lambda)$
is a right $\W_{r, n}$-module.
\end{proof}
We recall the definition of cellular algebras in  \cite{GL}.
\begin{Defn}\cite{GL}\label{GL}
    Let $R$ be a commutative ring and $A$ an $R$--algebra.
    Fix a partially ordered set $\Lambda=(\Lambda,\gedom)$ and for each
    $\lambda\in\Lambda$ let $T(\lambda)$ be a finite set. Finally,
    fix $C^\lambda_{\bfs\bft}\in A$ for all
    $\lambda\in\Lambda$ and $\bfs,\bft\in T(\lambda)$.

    Then the triple $(\Lambda,T,C)$ is a \textsf{cell datum} for $A$ if:
    \begin{enumerate}
    \item $\set{C^\lambda_{\bfs\bft}|\lambda\in\Lambda\text{ and }\bfs,\bft\in
        T(\lambda)}$ is an $R$--basis for $A$;
    \item the $R$--linear map $*\map AA$ determined by
        $(C^\lambda_{\bfs\bft})^*=C^\lambda_{\bft\bfs}$, for all
        $\lambda\in\Lambda$ and all $\bfs,\bft\in T(\lambda)$ is an
        anti--isomorphism of $A$;
    \item for all $\lambda\in\Lambda$, $\bfs\in T(\lambda)$ and $a\in A$
        there exist scalars $r_{\bft\bfu}(a)\in R$ such that
        $$C^\lambda_{\bfs\bft} a
            =\sum_{\bfu\in T(\lambda)}r_{\bft\bfu}(a)C^\lambda_{\bfs\bfu}
                     \pmod{A^{\gdom\lambda}},$$
            where
    $A^{\gdom\lambda}=R\text{--span}%
      \set{C^\mu_{\bfu\bfv}|\mu\gdom\lambda\text{ and }\bfu,\bfv\in T(\mu)}$.
    \end{enumerate}
    \noindent Furthermore, each scalar $r_{\bft\bfu}(a)$ is independent of $\bfs$.
     An algebra $A$ is a \textsf{cellular algebra} if it has
    a cell datum and in this case we call
    $\set{C^\lambda_{\bfs\bft}|\bfs,\bft\in T(\lambda), \lambda\in\Lambda}$
    a \textsf{cellular basis} of $A$.
\end{Defn}

\begin{Theorem}\label{W cellular}
Let $\W_{r, n}$ be the cyclotomic Birman--Wenzl algebras over $R$.
 Then
$$\mathscr C=\bigcup_{(f, \lambda)\in \Lambda_{r, n}^+}\set{C^{(f,\lambda)}_{(\s,e, \kappa_e)(\t, d, \kappa_d)}|
            (\s, e, \kappa_e),(\t, d, \kappa_d)\in\delta(f,\lambda)
              }$$
is a cellular basis of $\W_{r, n}$ where $C^{(f,\lambda)}_{(\s, e,
\kappa_e)(\t,d, \kappa_d)}
              = X^{\kappa_e} T_e^* E^{f, n}M_{\s\t} T_d
              X^{\kappa_d}$.
 The $R$-linear map $\ast$,
 which fixes $T_i, X_j, 1\le i\le n-1$ and $1\le j\le n$ is the required anti-involution.
 In particular,
the rank of $\W_{r, n}$ is $r^n (2n-1)!!$.
\end{Theorem}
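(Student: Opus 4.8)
The plan is to establish the three axioms of Definition~\ref{GL} for the proposed basis $\mathscr C$, indexed by the poset $\Lambda_{r,n}^+$ with the sets $T(f,\lambda)=\delta(f,\lambda)$. The cardinality count is the easiest part and I would dispatch it first: for each $(f,\lambda)$ the number of triples $(\t,d,\kappa_d)$ is $|\Std(\lambda)|\cdot|\Dcal_{f,n}|\cdot|\Nrf|$, and $\mathscr C$ has $\sum_{(f,\lambda)}|\delta(f,\lambda)|^2$ elements; a routine combinatorial identity (already implicit in \cite{RX} for odd $r$, and purely combinatorial so independent of the parity of $r$) shows this equals $r^n(2n-1)!!$. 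Granting that $\mathscr C$ is a spanning set, linear independence then follows from the rank count, so the spanning statement is really the crux.

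For spanning, I would argue by downward induction on the filtration \eqref{filt} together with the dominance order. Fix $(f,\lambda)$ and work inside $\W_{r,n}^{\unrhd(f,\lambda)}/\W_{r,n}^{\rhd(f,\lambda)}$. Proposition~\ref{key} already shows that $\Delta_\s(f,\lambda)$, the span of the $E^{f,n}M_{\s\t}T_dX^{\kappa_d}$ modulo $\W_{r,n}^{\rhd(f,\lambda)}$, is a right $\W_{r,n}$-module; applying the anti-involution $\ast$ and the analogous left-module statement gives that $X^{\kappa_e}T_e^*E^{f,n}M_{\s\t}T_dX^{\kappa_d}$ modulo $\W_{r,n}^{\rhd(f,\lambda)}$ span a sub-bimodule, and the two-sided ideal generated by $E^{f,n}M_\lambda$ in the quotient is exactly this span. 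Summing over $\mu\unrhd\lambda$ in $\Lambda_r^+(n-2f)$ and over $f$ recovers, by Definition~\ref{cellmodule}, the successive quotients of \eqref{filt}; so $\mathscr C$ spans each layer, hence spans $\W_{r,n}$. The key input here is \cite[5.8]{RX} (the identification of $M_{\s\t}\W_{r,n-2f}'E^{f,n}$ with $E^{f,n}\epsilon_{n-2f}(\m_{\s\t}\H_{r,n-2f})$ modulo higher terms) together with Dipper--James--Mathas's Murphy basis for $\H_{r,n-2f}$; crucially none of these uses $\omega_0$ invertible or $r$ odd, which is the whole point of the improvement over \cite{RX}.

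Axiom (2), that $\ast$ is an anti-isomorphism with $(C^{(f,\lambda)}_{(\s,e,\kappa_e)(\t,d,\kappa_d)})^*=C^{(f,\lambda)}_{(\t,d,\kappa_d)(\s,e,\kappa_e)}$, is immediate from the definition $C^{(f,\lambda)}_{(\s,e,\kappa_e)(\t,d,\kappa_d)}=X^{\kappa_e}T_e^*E^{f,n}M_{\s\t}T_dX^{\kappa_d}$ once one checks $(E^{f,n})^*=E^{f,n}$ and $(M_{\s\t})^*=M_{\t\s}$ — both follow directly from \eqref{JMe} and the fact that the $E_i$, and the $X_i-u_s$ factors and the $\sum_w q^{l(w)}T_w$ block, are each $\ast$-symmetric after conjugating the $T_{d(\s)}^\ast$ and $T_{d(\t)}$ past. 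Axiom (3), the straightening law, is then a consequence of Proposition~\ref{key}: since $\Delta_\s(f,\lambda)$ is a right module, right multiplication of $E^{f,n}M_{\s\t}T_dX^{\kappa_d}$ by any $h\in\W_{r,n}$ lies in $\Delta_\s(f,\lambda)$, i.e. is an $R$-combination of the $E^{f,n}M_{\s\t}T_{d'}X^{\kappa_{d'}}$ modulo $\W_{r,n}^{\rhd(f,\lambda)}$; left-multiplying by $X^{\kappa_e}T_e^*$ gives the displayed congruence, and independence of the coefficients from $(\s,e,\kappa_e)$ follows because the right-module structure on $\Delta_\s(f,\lambda)$ does not see $\s$ — the action is through the $\H_{r,n-2f}$-module structure on the Murphy-basis side, exactly as in \cite{DJM:cyc}.

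The main obstacle is the spanning argument in the even-$r$ case, specifically checking that Lemma~\ref{Yu2.7} and \cite[5.8]{RX} genuinely go through without the invertibility of $\omega_0$ and without $r$ odd. The unwrapping and anti-symmetry relations in Definition~\ref{waff} interact with $\omega_a$, and in \cite{RX} several reductions were performed by multiplying and dividing by $\omega_0$; I would need to verify that under Assumption~\ref{admiss} — in particular the explicit formula \ref{admiss}(3) for $\omega_0$ in terms of the $u_i$, $q$, $\varrho$ — all the required normal-form manipulations can be carried out over $R$ itself, using Yu's cellularity argument \cite{Yu1} as the template for the bimodule-spanning step. Once that is in place the cell-datum axioms are formal, and the rank equality $r^n(2n-1)!!$ drops out of the basis being both spanning and (by the count) of the right size.
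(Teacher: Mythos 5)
Your treatment of axioms (2) and (3) of Definition~\ref{GL} is essentially correct and matches the route in \cite[5.41]{RX} that the paper invokes: Proposition~\ref{key} gives the right-module straightening law, the $\ast$-symmetry of $E^{f,n}$ and $M_{\s\t}$ follows from \eqref{JMe}, and independence of the structure constants from the left index comes from the $\H_{r,n-2f}$-module description via \cite[5.8]{RX} and \cite{DJM:cyc}. Your caution about whether Lemma~\ref{Yu2.7} and \cite[5.8]{RX} survive for even $r$ is also on point, but is already resolved in the paper: Lemma~\ref{Yu2.7} is credited to Yu without any invertibility hypothesis on $\omega_0$.

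The genuine gap is in how you propose to get linear independence. You write that, once spanning is established, linear independence ``follows from the rank count.'' But the rank of $\W_{r,n}$ over $R$ is not known a priori --- it is precisely what the theorem asserts (the final clause ``the rank of $\W_{r,n}$ is $r^n(2n-1)!!$''). The cardinality computation you sketch gives $|\mathscr C|=r^n(2n-1)!!$, and spanning then gives an \emph{upper} bound $\operatorname{rank}\W_{r,n}\le r^n(2n-1)!!$. Nothing in your argument supplies the matching lower bound, so you cannot deduce that a spanning set of the right size is a basis; that would be circular. The paper's own proof flags exactly this point: it says one must follow \cite[5.41]{RX}, where linear independence is obtained from the seminormal representations of $\W_{r,n}$, and it adds the crucial observation that these representations were constructed in \cite[4.19]{RX} for \emph{all} positive integers $r$, not just odd $r$. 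Concretely, one works over the field of fractions of $\mathbb Z[u_1^{\pm},\dots,u_r^{\pm},q^{\pm},\delta^{-1}]$ where $\W_{r,n}$ is split semisimple, uses the seminormal representations $\Delta(\lambda)$ indexed by $(f,\lambda)\in\Lambda^+_{r,n}$ with $\dim\Delta(\lambda)=|\UPD_n(\lambda)|$ to obtain the lower bound $\sum(\dim\Delta(\lambda))^2=r^n(2n-1)!!$ on the dimension, and then specializes back to $R$. You need to insert this step; without it the proof does not close.
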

\begin{proof} This result can be proved  by arguments in the proof of
\cite[5.41]{RX}. We leave the details to the reader.  The only
difference is that we have to use Proposition~\ref{key} instead of
\cite[5.10]{RX}. Finally, we remark that  we use seminormal
representations for $\W_{r, n}$ in the proof of \cite[5.41]{RX}.
Such representations have been constructed in \cite[4.19]{RX}  for
all positive integers $r$.
\end{proof}

\begin{Remark} Yu~\cite{Yu1}  has proved that $\W_{r, n}$ is cellular under the assumption that
 $\omega_0$ is invertible. Finally, we remark that Theorem~\ref{W cellular} for all odd positive
 integers $r$ has been proved in \cite[5.41]{RX}.
\end{Remark}

 Let $F$ be an arbitrary field, which
contains the non-zero  parameters $q, u_1, \dots, u_r$ and
$q-q^{-1}$. Assume that $\Omega\cup\{\varrho\}\subset F$ is
$\bu$-admissible in the sense of the Assumption~\ref{admiss}.
\textsf{We always keep this assumption when we consider $\W_{r, n}$
over $F$ later on}. Let $\W_{r, n, F}$ be the cyclotomic
Birman--Wenzl algebra over $F$. By standard arguments, we have
$$\W_{r, n, F}\cong \W_{r, n}\otimes_R F.$$
In the remainder of this  paper, we use $\W_{r, n}$ instead of
$\W_{r, n, F}$ if there is no confusion.

By using Dipper-Mathas's Morita equivalent theorem for Ariki-Koike
algebras \cite{DM:Morita}, we can assume $u_i=q^{k_i}, k_i\in
\mathbb Z$ in the following theorem without loss of generality. See
the remark in \cite[p130]{AMR}.

\begin{Theorem}\label{simplem}
 Let $\W_{r,n}$
be the cyclotomic Birman--Wenzl algebra over $F$.
\begin{enumerate}
\item If $n$ is odd, then the  non-isomorphic
irreducible $\W_{r,n}$-modules are indexed by $(f, \lambda)$ where
$0\le f\le \lfloor \frac n2\rfloor$ and  $\lambda$ are\textsf{
$\bu$-Kleshchev multipartitions} of $n-2f$ in the sense of
\cite{AM:simples}.
\item Suppose that $n$ is an even number.
\begin{enumerate}
\item If $\omega_i\neq 0$ for some non-negative integers $i\le r-1$, then the non-isomorphic
irreducible $\W_{r,n}$-modules are indexed by   $(f, \lambda)$ where
$0\le f\le \frac n2$ and  $\lambda$ are \textsf{ $\bu$-Kleshchev
multipartitions} of $n-2f$.
\item If $\omega_i= 0$ for all non-negative integers  $i\le r-1$, then the set of all pair-wise non-isomorphic
irreducible $\W_{r,n}$-modules are  indexed by $(f, \lambda)$ where
$0\le f< \frac n2$ and  $\lambda$ are \textsf{ $\bu$-Kleshchev
multipartitions} of $n-2f$.
\end{enumerate}
\end{enumerate}
\end{Theorem}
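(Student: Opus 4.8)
The plan is to exploit the cellular structure of $\W_{r,n}$ established in Theorem~\ref{W cellular}, together with the general theory of cellular algebras from \cite{GL}. Recall that for a cellular algebra $A$ with poset $\Lambda$, the non-isomorphic irreducible $A$-modules are indexed by the set $\Lambda_0=\set{\mu\in\Lambda|\phi_\mu\neq 0}$, where $\phi_\mu$ is the bilinear form on the cell module $\Delta^\mu$. So the problem reduces to determining, for each $(f,\lambda)\in\Lambda_{r,n}^+$, whether the Gram form on the cell module $\Delta_\s(f,\lambda)$ (equivalently, on $\Delta(f,\lambda)$) is nonzero. The first step would be to relate this form to the corresponding form for the Ariki--Koike algebra $\H_{r,n-2f}$: using the isomorphism $\H_{r,n}\cong\W_{r,n}/\Ef_n$ and the congruence $M_{\s\t}\W_{r,n-2f}'E^{f,n}\equiv E^{f,n}\epsilon_{n-2f}(\m_{\s\t}\H_{r,n-2f})$ from the proof of Proposition~\ref{key}, one sees that the bilinear form on $\Delta(f,\lambda)$ restricted to the ``top layer'' $d=1,\kappa_d=\0$ is, up to an invertible scalar and a power of $\omega_0$ coming from $E^{f,n}(E^{f,n})^*=\omega_0^f E^{f,n}$-type identities, the Dipper--James--Mathas form on the Specht module $S^\lambda$ for $\H_{r,n-2f}$. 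By \cite{AM:simples}, the latter is nonzero precisely when $\lambda$ is a $\bu$-Kleshchev multipartition of $n-2f$.

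The second step handles the subtlety introduced by the parameter $\omega_0$ (and more generally the $\omega_i$). When $f<n/2$, or when some $\omega_i\neq 0$, the relevant powers of $\omega_0$ (or the unwrapping scalars) are nonzero in $F$, so nonvanishing of the $\W_{r,n}$-form is equivalent to nonvanishing of the Ariki--Koike form; this immediately gives parts (1) (there $f$ can be $\lfloor n/2\rfloor$ but $n-2f\geq 1$ so $\lambda$ is a genuine multipartition of a positive integer, and the argument is uniform) and (2)(a). For part (2)(b), the critical case is $f=n/2$, so $\lambda$ is the empty multipartition of $0$: here the cell module $\Delta(n/2,\varnothing)$ is spanned by images of $E^{n/2,n}T_dX^{\kappa_d}$, and evaluating the form requires computing $E^{f,n}(\text{something})(E^{f,n})^*$. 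The key computation is that this collapses, via the idempotent relation $E_i^2=\omega_0 E_i$, the unwrapping relations $E_1X_1^aE_1=\omega_aE_1$, and the untwisting/tangle relations, to a scalar that is a polynomial in the $\omega_i$, $0\le i\le r-1$ — in fact the leading such scalar is (a unit times) $\omega_0$ when $n/2\ge 1$. Hence if all $\omega_i=0$ for $0\le i\le r-1$ the form on $\Delta(n/2,\varnothing)$ vanishes identically, removing exactly the index $(n/2,\varnothing)$; otherwise it survives. One must also check that no \emph{other} index is affected by the $\omega_i$ vanishing, i.e. that for $f<n/2$ the relevant scalars remain nonzero — this follows because for $f<n/2$ the form already restricts faithfully onto the Ariki--Koike quotient on the top layer, where no $\omega$'s appear.

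The third step is the Morita-equivalence reduction already flagged in the excerpt: by Dipper--Mathas \cite{DM:Morita} we may assume $u_i=q^{k_i}$, so that the $\bu$-Kleshchev condition is the usual Kleshchev condition relative to the multicharge $(k_1,\dots,k_r)$, and \cite{AM:simples} applies verbatim to identify $\Lambda_0$ for $\H_{r,m}$. Assembling: $\Lambda_0$ for $\W_{r,n}$ equals $\set{(f,\lambda)|0\le f\le\lfloor n/2\rfloor,\ \lambda\ \bu\text{-Kleshchev of }n-2f}$, minus the single pair $(n/2,\varnothing)$ in the case $n$ even and all $\omega_i=0$, $0\le i\le r-1$. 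Since the number of non-isomorphic irreducibles is $|\Lambda_0|$, this is exactly the asserted classification.

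\medskip\noindent\textbf{Main obstacle.} The delicate point is the $\omega_i$-bookkeeping in step two: one must show cleanly that the Gram form on $\Delta(f,\lambda)$ for $f<n/2$ does \emph{not} degenerate when the $\omega_i$ vanish (so that only $(n/2,\varnothing)$ is lost), and simultaneously that the form on $\Delta(n/2,\varnothing)$ degenerates \emph{precisely} under $\omega_0=\dots=\omega_{r-1}=0$. This requires a careful reduction of products $E^{f,n}\,w\,(E^{f,n})^*$ to scalars using the Brauer-type relations in Definition~\ref{waff}, tracking which $\omega_a$ occur; by \cite[3.17]{RX} all $\omega_a$ are polynomials in $\omega_0,\dots,\omega_{r-1}$, so vanishing of the latter kills all $\omega_a$, which makes the $f=n/2$ collapse total. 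The rest is essentially an application of \cite{GL}, \cite{DJM:cyc}, \cite{AM:simples} and the arguments already used in \cite{RX}, so I would not expect further surprises there.
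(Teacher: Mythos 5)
Your overall framework is the right one — a cellular algebra has its simples indexed by the cell indices $\mu$ whose Gram form $\phi_\mu$ is nonzero, so the classification reduces to deciding for each $(f,\lambda)\in\Lambda^+_{r,n}$ whether the bilinear form on $\Delta(f,\lambda)$ vanishes identically. And your Step 3 (Dipper--Mathas Morita reduction to $u_i=q^{k_i}$) is standard and unproblematic. But Step 2 contains a genuine error that you partially flag but then wave away, and without fixing it the argument does not establish parts (1) or (2)(a) in full generality.

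The error is the claim that ``for $f<n/2$ the form already restricts faithfully onto the Ariki--Koike quotient on the top layer, where no $\omega$'s appear.'' That is false. For $d=e=1$ and $\kappa=\0$ one computes, using $E^{f,n}M_{\s\t}=M_{\s\t}E^{f,n}$ (the indices are disjoint) and $(E^{f,n})^2=\omega_0^fE^{f,n}$, that
$$E^{f,n}M_{\s\t_1}\cdot E^{f,n}M_{\t_2\u}=\omega_0^f\,E^{f,n}\,M_{\s\t_1}M_{\t_2\u},$$
so the restriction of the form to the layer $d=1$, $\kappa_d=\0$ is $\omega_0^f$ \emph{times} the Dipper--James--Mathas form. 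The factor $\omega_0^f$ is present for every $f\ge1$, not only for $f=n/2$, and in the $\u$-admissible setting $\omega_0$ is a specific rational expression in $q,\varrho,u_1,\dots,u_r$ that can perfectly well be $0$ while $n-2f>0$. When $\omega_0=0$ and $f\ge1$ the top-layer form is therefore identically zero, and you cannot conclude nonvanishing of the form on $\Delta(f,\lambda)$ from the top layer at all.

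A concrete instance: take $r=1$, $n=3$, $f=1$, $\lambda=(1)$, with $\omega_0=0$ (equivalently $\varrho-\varrho^{-1}=\delta$). The theorem asserts $D^{(1,(1))}\ne0$. The cell module $\Delta(1,(1))$ has basis $\{E_2,\,E_2T_1,\,E_2T_1T_2\}$, and the Gram matrix works out to
$$\begin{pmatrix}0 & \varrho^{-1} & 1\\ \varrho^{-1} & \delta\varrho^{-1}-\delta\varrho & \varrho\\ 1 & \varrho & 0\end{pmatrix},$$
which is visibly nonzero, and indeed has determinant $2+\delta^2$. But its $(1,1)$ entry, the ``top-layer'' restriction, is exactly $\omega_0=0$. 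So the nonvanishing of $\phi_{(1,(1))}$ does not come from the top layer, and your argument, as written, cannot see it. The same phenomenon occurs for all $0<f<n/2$ whenever $\omega_0=0$ in $F$, so the gap is not an isolated edge case.

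Consequently the dichotomy you set up — $\omega_0^f$ factor is harmless for $f<n/2$ or when some $\omega_i\neq0$, and fatal only at $(n/2,\varnothing)$ — is not established. To repair it you need an argument that is global rather than layer-by-layer. The paper's own proof simply defers to the arguments of \cite[\S6]{RX} (made applicable for even $r$ once Proposition~\ref{key} is available); that treatment proceeds by a careful inductive reduction — roughly, relating irreducibles of $\W_{r,n}$ with $f\ge1$ to those of $\W_{r,n-2}$ via the $E_{n-1}$-truncation (Lemma~\ref{ewe}/Corollary~\ref{ewef}), so that the question for $(f,\lambda)$ with $n-2f>0$ bottoms out at the Ariki--Koike case $\Delta(0,\lambda)$ for $\H_{r,n-2f}$ without ever needing $\omega_0\neq0$, while the unique index $(n/2,\varnothing)$ is treated separately and is where the $\omega_i$'s enter. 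Your proposal needs that reduction, or an equivalent, in place of the (incorrect) top-layer faithfulness claim; you acknowledge this as ``the main obstacle'' but then asserting ``I would not expect further surprises'' is exactly where the surprise lies.
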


\begin{proof} When $r$ is odd, this is  \cite[6.3]{RX}. In
general, the result still follows from the arguments in  \cite[\S
6]{RX}. The reason why Rui and Xu had to assume that $2\nmid r$ in
\cite[\S6]{RX} is that they did not have Proposition~\ref{key} for
$2\mid r$ in \cite{RX}. We leave the details to the
reader.\end{proof}

We close this section by giving a criterion on $\W_{r, n}$ being
quasi-hereditary in the sense of  \cite{CPS}.

\begin{Cor} Suppose that $\W_{r, n}$ is defined over the field $F$.
\begin{enumerate}\item Suppose that $\omega_i\neq 0$ for some
$i, 0\le i\le r-1$. Then $\W_{r, n}$ is quasi-hereditary if and only
if  $o(q^2)>n$ and $|d|\geq n$ whenever $u_iu_j^{-1}-q^{2d}=0$ and
$d\in \mathbb Z$ with $1\le i\neq j\le r$.
\item Suppose that $\omega_i=0$ for all
$i, 0\le i\le r-1$. Then $\W_{r, n}$ is quasi-hereditary if and only
if $n$ is odd and $o(q^2)>n$ and $|d|\geq n$ whenever
$u_iu_j^{-1}-q^{2d}=0$ and $d\in \mathbb Z$ with $1\le i\neq j\le
r$.\end{enumerate}
\end{Cor}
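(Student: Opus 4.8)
The plan is to deduce the quasi-heredity criterion from the cellular structure of $\W_{r,n}$ together with the classification of irreducibles in Theorem~\ref{simplem}. Recall from \cite{GL,CPS} that a cellular algebra $A$ over a field is quasi-hereditary if and only if no cell module has a degenerate cellular (Gram) bilinear form, equivalently, if and only if the number of irreducible $A$-modules equals the number of cells $\lambda\in\Lambda$. For $\W_{r,n}$ the cells are indexed by $(f,\lambda)\in\Lambda_{r,n}^+$, so the total count is $\sum_{f}|\Lambda_r^+(n-2f)|$, while by Theorem~\ref{simplem} the irreducibles are indexed by pairs $(f,\lambda)$ with $\lambda$ a $\bu$-Kleshchev multipartition of $n-2f$ (with $f$ restricted to $f<n/2$ in the even case when all $\omega_i=0$). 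So the first step is to rephrase ``$\W_{r,n}$ quasi-hereditary'' as the twin conditions: (i) every multipartition of $n-2f$ is $\bu$-Kleshchev, for every relevant $f$; and (ii) in the case $n$ even with $\omega_i=0$ for all $i$, the cell $(n/2,\varnothing)$ must not occur — but that cell always occurs as a genuine cell, so quasi-heredity forces that case out entirely, i.e. forces $n$ odd. This already explains the shape of part (b).

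Next I would translate condition (i) into the numerical conditions on $q$ and the $u_i$. Here one invokes the known description of when \emph{all} multipartitions of a given size are Kleshchev, i.e. when the cyclotomic Hecke (Ariki–Koike) algebra $\H_{r,m}$ is semisimple: by Ariki's criterion (and Ariki–Mathas), $\H_{r,m}$ is semisimple over $F$ precisely when $o(q^2)>m$ and, whenever $u_iu_j^{-1}=q^{2d}$ with $i\neq j$, one has $|d|\geq m$. Since in both cases of the corollary the largest value of $n-2f$ that occurs is $n$ (resp. $n-2$ when $n$ even and all $\omega_i=0$, but that case is already excluded), condition (i) for all relevant $f$ is equivalent to $\H_{r,n}$ being semisimple, i.e. to $o(q^2)>n$ and $|d|\geq n$ whenever $u_iu_j^{-1}-q^{2d}=0$. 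Conversely, if those inequalities hold, then every $\H_{r,n-2f}$ is semisimple for all $f$, so every multipartition in sight is Kleshchev, giving the equality of counts. This handles the ``if and only if'' in part (a), and the same numerical conditions feed into part (b) once $n$ is odd.

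The one point needing care — and the main obstacle — is the direction asserting that the numerical conditions are \emph{necessary}: a priori, even if some $\H_{r,m}$ is not semisimple, it is conceivable that the ``extra'' non-Kleshchev multipartitions are exactly compensated by the recollement/cell structure of $\W_{r,n}$ so that the Gram forms of $\W_{r,n}$ stay nondegenerate. To rule this out I would argue that $\W_{r,n}/\Ef_n\cong\H_{r,n}$ (the isomorphism $\proj_n$ recalled in the excerpt), and that the cell $(0,\lambda)$-module of $\W_{r,n}$ restricts, under this quotient, to the Specht module $S^\lambda$ of $\H_{r,n}$ with the \emph{same} cellular bilinear form; hence if the latter is degenerate for some $\lambda$, so is the former, and $\W_{r,n}$ fails to be quasi-hereditary. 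Thus nonsemisimplicity of $\H_{r,n}$ obstructs quasi-heredity of $\W_{r,n}$, which is exactly the contrapositive needed. Assembling: part (a) is the equivalence ``$\W_{r,n}$ quasi-hereditary $\iff$ $\H_{r,n}$ semisimple $\iff$ $o(q^2)>n$ and $|d|\geq n$ whenever $u_iu_j^{-1}=q^{2d}$''; part (b) adds the observation that $\omega_i=0$ for all $i\le r-1$ forces the top cell $(n/2,\varnothing)$ when $n$ is even, whose cell module is then the trivial $\W_{r,n}/\W_{r,n}^{\lfloor n/2\rfloor}\cong\H_{r,0}$-module carrying a nonzero form only if that cell is a heredity layer — which fails exactly when $n$ is even — so quasi-heredity additionally requires $n$ odd. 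I expect the bookkeeping comparing Gram forms across the quotient $\proj_n$ to be the part one must write out carefully; everything else is formal from \cite{GL,CPS}, Theorem~\ref{W cellular}, Theorem~\ref{simplem}, and Ariki's semisimplicity criterion.
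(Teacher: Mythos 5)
Your proposal follows the paper's argument in its essentials: both reduce quasi-heredity to the GL criterion that the irreducibles must be indexed by all of $\Lambda_{r,n}^+$, both invoke Theorem~\ref{simplem} to convert this into ``every multipartition of $n-2f$ is $\bu$-Kleshchev'' (plus the parity obstruction from the top cell $(n/2,\varnothing)$ in case~(b)), and both translate the Kleshchev condition into Ariki's semisimplicity criterion for $\H_{r,n}$. Two remarks. First, the ``degenerate Gram form'' phrasing at the start is imprecise — quasi-heredity for a cellular algebra corresponds to every cell form being \emph{nonzero}, not nondegenerate; the ``\#irreducibles $=$ \#cells'' characterization you give next is the one you actually need, and it is correct. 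Second, your worry about ``compensation'' and the subsequent detour through comparing Gram forms across the quotient $\proj_n$ is unnecessary: Theorem~\ref{simplem} already gives a \emph{precise} bijection between irreducibles and pairs $(f,\lambda)$ with $\lambda$ Kleshchev, so the moment some $\lambda\in\Lambda_r^+(n-2f)$ fails to be Kleshchev, the count of irreducibles falls strictly short of $|\Lambda_{r,n}^+|$ and quasi-heredity fails — no bilinear-form tracking is required.
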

\begin{proof} Note that $\W_{r, n}$ is cellular.
By \cite[3.10]{GL}, $\W_{r, n}$  is quasi-hereditary if and only if
the non-isomorphic irreducible $\W_{r, n}$-modules  are indexed by
$\Lambda^+_{r, n}$. So, the result follows from
Theorem~\ref{simplem}. In this case, the Ariki-Koike algebras
$\H_{r, n-2f}$, $0\le f\le \lfloor n/2\rfloor$ are semisimple.
\end{proof}

\section{ The JM-basis  of  $\Delta(f, \lambda)$}
 Throughout this section, we assume that
$\W_{r, n}$ is defined  over a commutative $R$. The main purpose of
this section is to construct the JM-basis for $\W_{r, n}$.

\begin{Lemma} \label{ewe} Suppose that $n\ge 2$. We have
 $E_{n-1} \W_{r, n} E_{n-1}=E_{n-1}\W_{r,
n-2}$.\end{Lemma}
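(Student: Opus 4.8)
Throughout, I read $\W_{r,n-2}$ as its natural image $\W_{r,n-2}'\subseteq\W_{r,n}$ (generated by $T_1,\dots,T_{n-3}$, $E_1,\dots,E_{n-3}$ and $X_1^{\pm1},\dots,X_{n-2}^{\pm1}$) and prove the two inclusions separately. The first point to record is that $\W_{r,n-2}'$ commutes with $E_{n-1}$: by the Kauffman skein relation $E_{n-1}=(\delta\varrho)^{-1}(1-T_{n-1}^{2}+\delta T_{n-1})$ (with $\delta\varrho$ invertible), so $E_{n-1}$ lies in the subalgebra generated by $T_{n-1}$, and by the braid and commutation relations of Definition~\ref{waff} $T_{n-1}$ commutes with each of $T_i,E_i$ ($i\le n-3$) and $X_j$ ($j\le n-2$); similarly $X_n$ commutes with $\W_{r,n-1}'$. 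For the inclusion $\supseteq$ (assume $n\ge3$; for $n=2$ the statement is checked directly from the unwrapping relation $E_1X_1^aE_1=\omega_aE_1$): using the untwisting relation $E_{n-1}E_{n-2}E_{n-1}=E_{n-1}$, every $h\in\W_{r,n-2}'$ satisfies $E_{n-1}h=E_{n-1}E_{n-2}E_{n-1}h=E_{n-1}(E_{n-2}h)E_{n-1}\in E_{n-1}\W_{r,n}E_{n-1}$, so $E_{n-1}\W_{r,n-2}'\subseteq E_{n-1}\W_{r,n}E_{n-1}$.

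For $\subseteq$ I would use the recursive structure of $\W_{r,n}$ over $\W_{r,n-1}'$ established in \cite{RX}: $\W_{r,n}$ is spanned over $R$ by $\W_{r,n-1}'$, by $\W_{r,n-1}'X_n^{a}\W_{r,n-1}'$ ($a\in\N_r$), by $\W_{r,n-1}'T_{n-1}\W_{r,n-1}'$ and by $\W_{r,n-1}'E_{n-1}\W_{r,n-1}'$. Hence it suffices to treat $E_{n-1}\,uzw\,E_{n-1}$ with $u,w\in\W_{r,n-1}'$ and $z\in\{1,X_n^{a}\ (a\in\N_r),T_{n-1},E_{n-1}\}$, and to show each such element lies in $E_{n-1}\W_{r,n-2}'$.

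The reduction rests on a few consequences of Definition~\ref{waff} (all with $i=n-2$): $E_{n-1}E_{n-2}E_{n-1}=E_{n-1}$ and $E_{n-1}^{2}=\omega_0E_{n-1}$; $E_{n-1}T_{n-1}=T_{n-1}E_{n-1}=\varrho E_{n-1}$, whence $T_{n-1}^{-1}E_{n-1}=\varrho^{-1}E_{n-1}$; the tangle relation $E_{n-1}E_{n-2}=E_{n-1}T_{n-2}T_{n-1}$, which gives $E_{n-1}T_{n-2}=E_{n-1}E_{n-2}T_{n-1}^{-1}$, hence $E_{n-1}T_{n-2}E_{n-1}=E_{n-1}E_{n-2}T_{n-1}^{-1}E_{n-1}=\varrho^{-1}E_{n-1}$, and, applying the anti-involution $\ast$, $E_{n-1}E_{n-2}T_{n-1}=E_{n-1}T_{n-2}^{-1}$; the anti-symmetry relation, which gives $E_{n-1}X_n^{a}=E_{n-1}X_{n-1}^{-a}$ and $X_n^{a}E_{n-1}=X_{n-1}^{-a}E_{n-1}$; and the unwrapping-type identity $E_{n-1}X_{n-1}^{a}E_{n-1}\in RE_{n-1}$, available from \cite{RX}. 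The crux is Step~1: for $a\in\W_{r,n-1}'$, $E_{n-1}aE_{n-1}\in E_{n-1}\W_{r,n-2}'$ — expand $a$ by the same recursive structure one level down (over $\W_{r,n-2}'$) and use, for $c,d\in\W_{r,n-2}'$ (which commute with $E_{n-1}$), the absorptions $E_{n-1}cE_{n-1}=\omega_0E_{n-1}c$, $E_{n-1}cT_{n-2}dE_{n-1}=\varrho^{-1}E_{n-1}cd$, $E_{n-1}cE_{n-2}dE_{n-1}=E_{n-1}cd$ and $E_{n-1}cX_{n-1}^{a}E_{n-1}\in RE_{n-1}c$; write $E_{n-1}aE_{n-1}=E_{n-1}\phi(a)$ with $\phi(a)\in\W_{r,n-2}'$. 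Then: for $z=E_{n-1}$, $E_{n-1}uE_{n-1}wE_{n-1}=E_{n-1}\phi(u)wE_{n-1}=E_{n-1}\phi(u)\phi(w)$ by Step~1; for $z=X_n^{a}$, since $X_n^{a}$ commutes with $u$, $E_{n-1}uX_n^{a}wE_{n-1}=E_{n-1}X_{n-1}^{-a}(uw)E_{n-1}$, of Step~1 type; for $z=T_{n-1}$, decompose $u$ as in Step~1 and absorb the intervening $T_{n-1}$ on the left of $E_{n-1}$ via $E_{n-1}T_{n-1}=\varrho E_{n-1}$, $E_{n-1}T_{n-2}T_{n-1}=E_{n-1}E_{n-2}$, $E_{n-1}E_{n-2}T_{n-1}=E_{n-1}T_{n-2}^{-1}$ (and the skein relations for the $X_{n-1}$-piece of $u$), again reducing to Step~1. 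Combining, $E_{n-1}\W_{r,n}E_{n-1}\subseteq E_{n-1}\W_{r,n-2}'$, which with $\supseteq$ proves the Lemma.

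The only ingredients that are not pure bookkeeping are the derivations of $E_{n-1}T_{n-2}E_{n-1}=\varrho^{-1}E_{n-1}$ and $E_{n-1}E_{n-2}T_{n-1}=E_{n-1}T_{n-2}^{-1}$ from the tangle relations, the unwrapping identity for $E_{n-1}$, and — unless one quotes the $R$-basis of \cite{RX} directly — the stability of the recursive spanning under multiplication by the generators; everything else reduces to sliding factors that commute with $E_{n-1}$.
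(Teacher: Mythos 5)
Your proof is correct in substance, but it takes a genuinely different route from the paper's. The paper dispatches the hard inclusion $E_{n-1}\W_{r,n}E_{n-1}\subseteq E_{n-1}\W_{r,n-2}$ in two lines by invoking Lemma~\ref{Yu2.7} with $f=1$: that spanning result immediately reduces everything to $E_{n-1}T_dX^{\kappa_d}E_{n-1}$ with $d\in\Dcal_{1,n}$, which is then handled by quoting the classical Birman--Wenzl result for $\kappa_d=0$ (with $\W_{1,n}\subset\W_{r,n}$) and \cite[4.21]{RX} for $\kappa_d\neq 0$ (noting that $d=1$ is then forced). You instead recover the same reduction by hand, importing the two-level recursive spanning of $\W_{r,n}$ over $\W_{r,n-1}'$ over $\W_{r,n-2}'$ from \cite{RX} and absorbing the middle factor $z\in\{1,X_n^a,T_{n-1},E_{n-1}\}$ case by case via the tangle, untwisting, anti-symmetry, and skein relations. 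This is more hands-on and more self-contained modulo the spanning fact, but it re-derives exactly the structural content that Lemma~\ref{Yu2.7} packages; the paper's route is shorter and leans more heavily on \cite{RX}. One small imprecision in your Step~1: the unwrapping-type identity from \cite[4.21]{RX} gives $E_{n-1}X_{n-1}^{a}E_{n-1}\in\W_{r,n-2}E_{n-1}$ (the coefficient is a central element of $\W_{r,n-2}$ in $X_1^{\pm1},\dots,X_{n-2}^{\pm1}$), not $RE_{n-1}$ in general; since $\W_{r,n-2}$ commutes with $E_{n-1}$, this harmless slip does not affect the argument. The easy inclusion $\supseteq$ via $E_{n-1}E_{n-2}E_{n-1}=E_{n-1}$ is essentially identical in both treatments.
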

\begin{proof} Since
$\W_{r,n-2}E_{n-1}=E_{n-1}\W_{r,n-2}E_{n-2}E_{n-1}\subset
E_{n-1}\W_{r, n}E_{n-1}$, we need only to show
 the inverse inclusion.

By Lemma~\ref{Yu2.7} for $f=1$,   we need only prove that
$E_{n-1}hE_{n-1}\in \B_{r,n-2}E_{n-1}$ for $h=T_d X^{\kappa_d}$ and
$d\in \Dcal_{1, n}$. By Definition~\ref{waff}(b)(c), we can assume
$X^{\kappa_d}=X_{n-1}^k$ for some $k\in \mathbb Z$ without loss of
generality.

Note that the Birman-Wenzl algebra $\W_{1, n}$ is a subalgebra of
$\W_{r, n}$. The result for  $k=0$ follows from the corresponding
result for $\W_{1, n}$ in \cite{BirmanWenzl}.
 Assume that $k\neq 0$. We have   $i_1=n-1$ and $j_1=n$ if
$d=s_{n-1, i_1} s_{n, j_1}$. So, $d=1$. By \cite[4.21]{RX},
$E_{n-1}X_{n-1}^k E_{n-1}=\omega_{n-1}^{(k)} E_{n-1}$ for some
$\omega_{n-1}^{(k)}\in \W_{r, n-2}$.  So, $E_{n-1} \W_{r, n}
E_{n-1}\subseteq E_{n-1}\W_{r, n-2}$. \end{proof}

Using Lemma~\ref{ewe} repeatedly yields the following result.
\begin{Cor} \label{ewef} $E^{f, n}\B_{r,n}E^{f, n}=\B_{r,n-2f}E^{f, n}$, for all
 positive integers  $ f\leq \lfloor \frac{n}{2}\rfloor$.\end{Cor}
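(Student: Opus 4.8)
The plan is to argue by induction on $f$, stripping off one factor $E_{n-1}$ at a time and feeding the resulting expression into Lemma~\ref{ewe}. Throughout, for $m<n$ I identify $\W_{r,m}$ with its image $\W_{r,m}'\subseteq\W_{r,n}$ under the natural homomorphism, exactly as is done in the statement of Lemma~\ref{ewe}. Write $E^{f,n}=E_{n-1}H$ with $H=E_{n-3}E_{n-5}\cdots E_{n-2f+1}=E^{f-1,n-2}$, where by convention $H=1$ when $f=1$. Since $\delta\varrho$ is invertible in $R$, the Kauffman skein relation gives $E_{n-1}=(\delta\varrho)^{-1}(1-T_{n-1}^2+\delta T_{n-1})$, so anything that commutes with $T_{n-1}$ commutes with $E_{n-1}$. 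By the braid relation $T_iT_j=T_jT_i$ for $|i-j|>1$ and the relation $T_iX_j=X_jT_i$ for $j\ne i,i+1$, the generator $T_{n-1}$, hence $E_{n-1}$, commutes with every generator of $\W_{r,n-2}$, so $E_{n-1}$ commutes with every element of $\W_{r,n-2}$; in particular $E^{f,n}=E_{n-1}H=HE_{n-1}$, and $E^{f,n}$ is fixed by $\ast$.

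First I would dispose of the base case $f=1$, which is precisely Lemma~\ref{ewe}: $E_{n-1}\W_{r,n}E_{n-1}=E_{n-1}\W_{r,n-2}=\W_{r,n-2}E_{n-1}$, the last equality by the commutativity just recorded. For the inductive step, fix $f$ with $1<f\le\floor{n2}$, so that $f-1\le\lfloor(n-2)/2\rfloor$, and assume the statement for the pair $(n-2,f-1)$; since $H=E^{f-1,n-2}$ lies in the subalgebra $\W_{r,n-2}$ (all of its factors $E_i$ have $i\le n-3$), pushing that identity forward along $\W_{r,n-2}\to\W_{r,n}$ gives $H\,\W_{r,n-2}\,H=\W_{r,n-2f}\,H$ inside $\W_{r,n}$. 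Then, using $E^{f,n}=HE_{n-1}=E_{n-1}H$, Lemma~\ref{ewe}, and that $E_{n-1}$ commutes with every element of $\W_{r,n-2}$ (which contains both $H$ and $\W_{r,n-2f}$),
\begin{align*}
E^{f,n}\,\W_{r,n}\,E^{f,n}
 &= H\,(E_{n-1}\W_{r,n}E_{n-1})\,H
  = H\,(E_{n-1}\W_{r,n-2})\,H \\
 &= E_{n-1}\,(H\,\W_{r,n-2}\,H)
  = E_{n-1}\,\W_{r,n-2f}\,H
  = \W_{r,n-2f}\,E_{n-1}H
  = \W_{r,n-2f}\,E^{f,n},
\end{align*}
which closes the induction and, since $f\le\floor{n2}$ was arbitrary, proves the claim.

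All of the mathematical substance sits in Lemma~\ref{ewe}; the remaining work is purely organisational. The one point that needs a little care, and which I would flag explicitly, is the commutation bookkeeping: that $H=E^{f-1,n-2}$ genuinely lies in $\W_{r,n-2}$ so that the inductive hypothesis may be invoked there, and that $E_{n-1}$ slides freely past $H$ and past $\W_{r,n-2f}$. Beyond this I do not anticipate any obstacle.
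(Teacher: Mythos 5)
Your proof is correct and formalises exactly what the paper means by ``using Lemma~\ref{ewe} repeatedly'': the induction on $f$, together with the observation that $E_{n-1}$ commutes with $\W_{r,n-2}$ (via the Kauffman relation and the braid/commutation relations), is the natural way to turn that one-line remark into a complete argument. Nothing is missing and the commutation bookkeeping you flag is exactly the point that deserves explicit mention.
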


By Theorem~\ref{W cellular}, $\W_{r, n}$ is cellular over the poset
$\Lambda^+_{r, n}$ in the sense of \cite{GL}. For each $(f,
\lambda)\in \Lambda^+_{r, n}$, we have the \textsf {cell module }
$\Delta(f, \lambda)$ with respect to the cellular basis of $\W_{r,
n}$ given in Theorem~\ref{W cellular}. By definition, it is a right
$\W_{r, n}$-module which is isomorphic to $\Delta_\s(f, \lambda)$
defined  in Proposition~\ref{key}. Later on, we will identify
$\Delta(f, \lambda)$ with  $\Delta_\s(f, \lambda)$ for
$\s=\t^\lambda$. We are going to construct a $\W_{r, n-1}
$--filtration of $\Delta(f, \lambda)$ by using arguments  in
\cite{RS}.

Let  $\sigma_f\map{\H_{r, n-2f}}\W^f_{r,n}/\W_{r,n}^{f+1}$ be the
$R$-linear map defined by
\begin{equation}\label{sigma} \sigma_f(h)=E^{f,
n}\proj_{n-2f}(h)+\W_{r,n}^{f+1}\end{equation}
 for all $h\in \H_{r,
n-2f}, 1\le f\le\floor{n2}$. Here $\epsilon_{n-2f}: \H_{r,
n-2f}\rightarrow \W_{r, n-2f}/\Ef_{n-2f}$ is the algebraic
isomorphism mentioned in section~2.

Given  $\lambda\in \Lambda_r^+(n)$ and   $\mu\in \Lambda_r(n)$. A
$\lambda$-tableau $\bfS$ is of type $\mu$ if it is obtained from
$Y(\lambda)$ by inserting the entries $(k, i)$ with $i\ge 1$ and
$1\le k\le r$ such that the number of the entries in $\bfS$ which
are equal to $(k, i)$ is $\mu_i^{(k)}$.

For any  $\bfs\in \Std(\lambda)$, let $\mu(\bfs)$  be
 obtained from $\s$ by replacing each entry $m$ in
$\s$ by $(k, i)$ if $m$ is in row $i$ of the $k$-th component of
$\bft^\mu$. Then $\mu(\bfs)$ is a  $\lambda$-tableau of type $\mu$.

Given $(k, i)$ and $(\ell, j)$ in $\set{1, 2, \dots, r}\times
\mathbb N$, we say that $(k, i)<(\ell, j)$ if either $k<\ell $ or
$k=\ell$ and $i<j$. In other words, $<$ is the lexicographic order
on
 $\set{1, 2, \dots, r}\times
\mathbb N$.

Following \cite{DJM:cyc}, we say that $\bfS=(\bfS^{(1)}, \bfS^{(2)},
\dots, \bfS^{(r)})$, a $\lambda$-tableau of type $\mu$,  is
semi-standard  if
\begin{enumerate}
\item the entries in each row of each component  $\bfS^{(k)}$ of $\bfS$ increase  weakly,
\item the entries in each column of each component $\bfS^{(k)}$ of $\bfS$ increase  strictly,
\item for each  positive integer $ k\le r$ no entry in $\bfS^{(k)}$ is of
form $(\ell, i)$ with $\ell<k$.
\end{enumerate}

Let $\SStd(\lambda, \mu)$ be the set of all semi-standard
$\lambda$-tableaux of type $\mu$.  Given  $\bfS\in \SStd(\lambda,
\mu)$ and  $\t\in \Std(\lambda)$.  Motivated by \cite{DJM:cyc},
write
\begin{equation}\label{ssbasis}
M_{\bfS \bft}=\sum_{{\substack{\bfs\in \Std(\lambda)\\
\mu(\bfs)=\bfS}}} M_{\bfs\bft}.\end{equation}

\begin{Lemma} \label{rightkey} (cf. \cite[4.8, 4.11-4.13]{RS})
\begin{enumerate} \item
 For any $h\in \B_{r, n}$, we have
$$
E^{f, n} h \equiv \sum_{\substack{h_1\in \H_{r, n-2f}\\  d\in
\Dcal_{f, n}\\ \kappa_d\in \Nrf}} \sigma_f (h_1) T_d
X^{\kappa_d}\pmod {\B_{r, n}^{f+1}}.
$$
\item For each $\mu\in \Lambda_r^+(n-2f)$, let  $L^{\mu}$ be the
right $\B_{r,n} $-submodule of $\B_{r,n}^f / \B_{r,n}^{f+1}$
generated by $E^{f, n} M_{\mu} \pmod{\B_{r,n}^{f+1}}$.
 Then  $L^{\mu}$  is the free $R$-module generated by
$ \Upsilon=\{E^{f, n} M_{\bfS \t} T_d X^{\kappa_d}
\pmod{\B_{r,n}^{f+1}} \mid \bfS \in \SStd(\lambda, \mu), (\t, d,
\kappa_d )\in \delta(f, \lambda), \lambda\in \Lambda_r^+(n-2f)\}$.
\item Suppose that $(f, \lambda)\in \Lambda_{r, n}^+$ with $f>0$. If   $\bfs \in \Std(\mu)$ such that $\mu\in
\Lambda_r^+(n-2f+1)$ and $\tau = \bfs_{n-2f} \vartriangleright
\lambda$, then $E^{f, n} T_{n-1, n-2f+1} T_{d(\bfs)}^\ast M_{\mu}
\in \B_{r, n}^{\rhd(f, \lambda)}$. \item Suppose that $(f,
\lambda)\in \Lambda^+_{r, n}$ with $f>0$ and $h\in E^{f-1, n-1}
M_\lambda \B_{r,n-1} \cap \B_{r,n-1}^f$. Then
 {\small $$
 E_{n-1}T_{n-1,n-2f+1} h   \equiv  \sum_{\substack{ h_1\in \H_{r, n-2f}\\ e\in \Dcal_{f, n-1}\\
  \kappa_e \in \N_r^{f,n-1}}}  E^{f, n} M_{\lambda}
\epsilon_{n-2f}(h_1) T_{n-2f,n} T_e X^{\kappa_e}
\pmod{\B_{r,n}^{f+1}}.
 $$}

 \end{enumerate}\end{Lemma}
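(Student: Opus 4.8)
The plan is to deduce all four parts from the cellular structure of $\W_{r,n}$ (Theorem~\ref{W cellular}), the isomorphism $\H_{r,n-2f}\cong\W_{r,n-2f}/\Ef_{n-2f}$, the reduction result Lemma~\ref{Yu2.7}, and the identity $E^{f,n}\B_{r,n}E^{f,n}=\B_{r,n-2f}E^{f,n}$ from Corollary~\ref{ewef}. These are exactly the inputs used for the analogous statements in \cite{RS}, so the argument is a transcription of \cite[4.8, 4.11--4.13]{RS} with $E^{f,n}$ replacing the corresponding idempotent product there; I will indicate where the $X^{\kappa_d}$-factors enter, since that is the only genuinely new feature compared to \cite{RS}.

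\emph{Part (a).} First I would multiply $E^{f,n}h$ on the left by $E^{f,n}$, using $E^{f,n}{}^2=\omega_0^f E^{f,n}$ up to lower ideals (more precisely, using that $E^{f,n}\W_{r,n}E^{f,n}=\W_{r,n-2f}E^{f,n}$ by Corollary~\ref{ewef}), to write $E^{f,n}h$ modulo $\B_{r,n}^{f+1}$ as an $R$-combination of elements $E^{f,n}a$ with $a\in\W_{r,n-2f}'$. By Lemma~\ref{Yu2.7} (applied to the submodule $N$ generated by $\W_{r,n-2f}'E^{f,n}T_dX^{\kappa_d}$), each such $E^{f,n}a\cdot(\text{anything})$ lies in the $R$-span of $\W_{r,n-2f}'E^{f,n}T_dX^{\kappa_d}$, $d\in\Dcal_{f,n}$, $\kappa_d\in\Nrf$; rewriting $\W_{r,n-2f}'E^{f,n}=E^{f,n}\epsilon_{n-2f}(\H_{r,n-2f})$ modulo $\Ef$-ideals and comparing with \eqref{sigma} gives the stated form $\sum\sigma_f(h_1)T_dX^{\kappa_d}$ modulo $\B_{r,n}^{f+1}$.

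\emph{Part (b).} The right $\B_{r,n}$-module $L^\mu$ is spanned, by part (a), by the elements $E^{f,n}M_\mu\sigma_f(h_1)T_dX^{\kappa_d}$; the point is to replace the factor $M_\mu\cdot\epsilon_{n-2f}(h_1)$ by the Murphy/Dipper--James--Mathas semistandard expansion. Concretely, $m_\mu h_1$ in $\H_{r,n-2f}$ expands in the Murphy basis as $\sum M_{\bfS\t}$ over semistandard $\bfS\in\SStd(\lambda,\mu)$ and standard $\t\in\Std(\lambda)$ with $\lambda\unrhd\mu$ \cite{DJM:cyc}, where $M_{\bfS\t}$ is the element \eqref{ssbasis}; transporting this back through $\epsilon_{n-2f}$ and multiplying by $E^{f,n}$ gives that $\Upsilon$ spans $L^\mu$, and $R$-freeness of $\Upsilon$ follows because these elements are (up to unitriangular change of basis, using the dominance order and the cell filtration \eqref{filt}) part of the cellular basis $\mathscr C$ of Theorem~\ref{W cellular}.

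\emph{Parts (c) and (d).} For (c) I would use that $T_{n-1,n-2f+1}T_{d(\bfs)}^\ast$ permutes the entries so that the ``top'' $2f$ strands carrying $E^{f,n}$ interact with the tableau $\bfs$; since $\tau=\bfs_{n-2f}\rhd\lambda$ strictly, the resulting element, after pushing $E^{f,n}$ past and invoking the anti-symmetry and tangle relations of Definition~\ref{waff}, lies in the span of cellular basis elements indexed by multipartitions strictly dominating $\lambda$, i.e.\ in $\B_{r,n}^{\rhd(f,\lambda)}$ — this is the cellular-order bookkeeping of \cite[4.12]{RS}. Part (d) combines (a) (now at level $f$ inside $\W_{r,n-1}$) with (c): writing $h\in E^{f-1,n-1}M_\lambda\B_{r,n-1}\cap\B_{r,n-1}^f$ via part (a) for $\W_{r,n-1}$, multiplying by $E_{n-1}T_{n-1,n-2f+1}$ to fold in the last two strands, and discarding all terms whose tableau component strictly dominates $\lambda$ by part (c), leaves exactly the asserted sum over $h_1\in\H_{r,n-2f}$, $e\in\Dcal_{f,n-1}$, $\kappa_e\in\N_r^{f,n-1}$, with the extra braid factor $T_{n-2f,n}$ recording the embedding $\Dcal_{f,n-1}\hookrightarrow\Dcal_{f,n}$.

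\emph{Main obstacle.} The routine but delicate part is the order-bookkeeping in (c) and the reindexing in (d): one must check that conjugating $E^{f,n}$ by the various $T_{i,j}$'s and $X^{\kappa}$'s, and then reducing via Lemma~\ref{Yu2.7}, never produces a term indexed by a multipartition that is $\not\rhd\lambda$ yet fails to be of the listed form — equivalently, that the coset combinatorics of $\Dcal_{f,n-1}$ inside $\Dcal_{f,n}$ matches the extra $T_{n-2f,n}$ factor exactly. Granting the computations of \cite[\S4]{RS} carry over verbatim once Corollary~\ref{ewef} and Lemma~\ref{Yu2.7} are in hand, everything else is formal; I would therefore state the proof as "repeat the arguments of \cite[4.8, 4.11--4.13]{RS}, using Corollary~\ref{ewef} and Lemma~\ref{Yu2.7}'' and spell out only the order estimate in (c).
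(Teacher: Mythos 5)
Your proposal takes the same route as the paper's own (very terse) proof, which just cites the arguments of \cite[4.8, 4.11--4.13]{RS} together with Corollary~\ref{ewef}; the extra detail you supply for parts (b)--(d) is accurate and matches what one finds in \cite{RS}. The one wrinkle is your opening of part (a): left-multiplying $E^{f,n}h$ by $E^{f,n}$ gives $\omega_0^f E^{f,n}h$, which loses information precisely because $\omega_0$ need not be invertible here (that is the whole point of avoiding Yu's hypothesis), and Corollary~\ref{ewef} applies to the two-sided expression $E^{f,n}hE^{f,n}$, not to $E^{f,n}h$ --- the correct first step is the one you reach a sentence later via Lemma~\ref{Yu2.7}, namely that $E^{f,n}\in N$ and $N$ is a right $\W_{r,n}$-module, so $E^{f,n}h\in N$ is already an $R$-combination of $\W_{r,n-2f}'E^{f,n}T_dX^{\kappa_d}$ with the $T_dX^{\kappa_d}$ tails necessarily present from the outset (they cannot be dropped, as your intermediate claim of a reduction to $\sum E^{f,n}a$ would suggest); one then uses $aE^{f,n}=E^{f,n}a$ for $a\in\W_{r,n-2f}'$ and absorbs $E^{f,n}\Ef_{n-2f}'\subseteq\W_{r,n}^{f+1}$ to land in the form $\sigma_f(h_1)T_dX^{\kappa_d}$.
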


\begin{proof} One can use arguments in the proof of \cite[4.8]{RS} together
with Corollary~\ref{ewef}
 to verify (a). (b)-(d) can be proved by arguments in the proof of
 \cite[4.11-4.13]{RS}.
 \end{proof}

Given two multipartitions $\lambda$ and $\mu$.  We say that $\mu$ is
obtained from $\lambda$ by \textsf{adding} a box (or node) and write
$\lambda\rightarrow \mu$ if there exists a pair $(s, i)$ such that
$\mu^{(s)}_i=\lambda^{(s)}_i+1$ and $\mu^{(t)}_j=\lambda^{(t)}_j$
for $(t, j)\ne(s, i)$. In this case, we will also say that $\lambda$
is obtained from $\mu$ by \textsf{removing} a box (or node).

\begin{Defn}\label{addrem} Suppose $\lambda\in \Lambda_r^+(n-2f)$ with
$s$ removable nodes $p_1, p_2, \cdots, p_s$ and $m-s$ addable nodes
$p_{s+1}, p_{s+2}, \dots, p_m$.
\begin{itemize}\item
Let $\mu_\lambda(i)\in \Lambda_r^+(n-2f-1)$  be obtained from
$\lambda$ by removing the  box $p_i$  for $1\le i\le s$.
\item Let  $\mu_\lambda(j)\in
\Lambda_r^+(n-2f+1)$  be obtained from $\lambda$ by adding the box
$p_{j}$ for  $s+1\le j\le m$.
\end{itemize}
\end{Defn}
 We identify $\mu_\lambda(i)$ with $(f, \mu_\lambda(i))\in
\Lambda^+_{r, n-1}$ (resp. $(f-1, \mu_\lambda(i))\in \Lambda^+_{r,
n-1}$) for $1\le i\le s$ (resp. $s+1\le i\le m$). So,
$\mu_\lambda(i)\rhd\mu_\lambda(j)$ for all  $i, j$ with $1\le i\le
s$ and $s+1\le j\le m$.
 We arrange the nodes
$p_i, 1\le i\le m$  such that
\begin{equation}\label{cellfil}\mu_\lambda(i)\vartriangleright \mu_\lambda(i+1)
\quad \text{for all $i$, $1\le i\le m-1$}\end{equation} with respect
to the partial order $\trianglelefteq$ on $\Lambda^+_{r, n-1}$.

For each $\lambda=(\lambda^{(1)},  \lambda^{(2)}, \dots,
\lambda^{(r)})\in \Lambda_r^+(n)$, let $[\lambda]=[a_1, a_2, \dots,
a_r]$ such that $a_i=\sum_{j=1}^i |\lambda^{(j)}|$, $1\le i\le r$.
  Write $[\mu_\lambda(i)]=[b_1, b_2, \dots, b_r]$ for $s+1\le i\le m$. In
the later case, $\mu_\lambda(i)$ is obtained from $\lambda$ by
adding a box, say $p_i=(t, k, \lambda_k^{(t)}+1)$. We remark that
$(t, k, \ell)\in Y(\lambda)$  is in the $k$-th row, $ \ell$-th
column of the $t$-th component of $Y(\lambda)$. When $1\le i\le s$,
$\mu_\lambda(i)$ is obtained from $\lambda$ by removing the box, say
$p_i=(t, k, \lambda_k^{(t)})$. We define
\begin{equation}\label{api}
\begin{cases} a_{p_i}= a_{t-1} +\sum_{j=1}^k
\lambda_j^{(t)}, &\text{if $1\le i\le s$,}\\
b_{p_i}=b_{t-1}
+\sum_{j=1}^{k} \mu_\lambda(i)_j^{(t)},  &\text{if $s+1\le i\le m$,}\\
  \end{cases}
\end{equation}
and
\begin{equation}\label{up}
y_{\mu_\lambda(i)}^\lambda =\begin{cases}  E^{f, n} M_\lambda
T_{a_{p_i},
n},& \text{if $1\le i\le s$,}\\
E_{n-1}T_{n-1, b_{p_i}} E^{f-1, n-1}M_{\mu_\lambda(i)},
& \text{if $s+1\le i\le m$.}\\
\end{cases}
\end{equation}

For each positive integer $i\le m$,  define
\begin{equation}\label{rest} \delta(\lambda,i)
  =\{(\s, d, \kappa_d)\mid \s\in \Std(\mu_\lambda(i)),
  d\in \Dcal_{\ell,n-1},\text{ and } \kappa_d \in \Nrl \} ,\end{equation}
                        where $\ell=f$ (resp.  $\ell=f-1$)  if $1\le i\le s
$ (resp.   $s+1\le i\le m$).

In the remainder of this section, we will keep our previous notation
$\mu_\lambda(i)$. In other words, $\mu_\lambda(i)$ is obtained from
$\lambda$ by removing (resp. adding) the node $p_i$ for $1\le i\le
s$ (resp. $s+1\le i\le m$).

\begin{Theorem} \label{cellf} For any $(f, \lambda)\in \Lambda^+_{r,
n}$ with $f\ge 0$,  let $\Wmuieq$  be the $R$-submodule of
$\Delta(f, \lambda)$ generated by
 $\{ y_{\mu_\lambda(j)}^\lambda T_{d(\bft)} T_d X^{\kappa_d}
  \pmod {\W_{r, n}^{\vartriangleright {(f, \lambda)}}} \mid
(\bft, d, \kappa_d) \in \delta(\lambda,j), 1\le j\le i\}$. Then
$$(0)\subseteq S^{\gedom\mu_\lambda(1)}\subseteq \cdots \subseteq
S^{\gedom\mu_\lambda (m)}= \Delta(f,\lambda)$$ is a
$\B_{r,n-1}$--filtration of $\Delta(f,\lambda)$. Further, we have
the following $\W_{r, n-1}$-isomorphism:
$$ \Delta(\ell, \mu_\lambda(i)) \cong \Wmuieq/S^{\unrhd
\mu_\lambda(i-1)}, 1\le i\le m. $$
\end{Theorem}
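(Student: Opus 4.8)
The plan is to mimic \cite[\S4]{RS} step by step; the only genuinely new point is that $r$ may now be even, which alters the index set $\N_r$ and the constants $\gamma_i$ but not the combinatorial skeleton. Throughout I would identify $\Delta(f,\lambda)$ with $\Delta_{\t^\lambda}(f,\lambda)$ of Proposition~\ref{key}, so that it is free over $R$ on the classes of $E^{f,n}M_\lambda T_{d(\bft)}T_dX^{\kappa_d}$, $(\bft,d,\kappa_d)\in\delta(f,\lambda)$, modulo $\W_{r,n}^{\gdom(f,\lambda)}$ (here $M_{\t^\lambda\bft}=M_\lambda T_{d(\bft)}$ since $d(\t^\lambda)=1$). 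There are three things to establish: (i) each $S^{\gedom\mu_\lambda(i)}$ is a right $\W_{r,n-1}$-submodule of $\Delta(f,\lambda)$; (ii) $S^{\gedom\mu_\lambda(m)}=\Delta(f,\lambda)$, so that the chain is an exhaustive filtration by free $R$-modules; (iii) the stated isomorphisms of subquotients.

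For (i) I would fix $i$ and a generator $h\in\{T_1,\dots,T_{n-2},E_1,\dots,E_{n-2},X_1^{\pm1},\dots,X_{n-1}^{\pm1}\}$ of $\W_{r,n-1}$ and show that $y_{\mu_\lambda(i)}^\lambda T_{d(\bft)}T_dX^{\kappa_d}\,h$ lies in $S^{\gedom\mu_\lambda(i)}$ modulo $\W_{r,n}^{\gdom(f,\lambda)}$. In the removal case $1\le i\le s$ (so $\ell=f$ and $y_{\mu_\lambda(i)}^\lambda=E^{f,n}M_\lambda T_{a_{p_i},n}$) I would apply Lemma~\ref{rightkey}(a), with $n-1$ in place of $n$, together with Corollary~\ref{ewef}, to rewrite the product as an $R$-combination of terms $E^{f,n}M_\lambda\,\epsilon_{n-2f}(h_1)\,T_{a_{p_j},n}T_eX^{\kappa_e}$, and then invoke the Murphy-basis branching rule for the Ariki--Koike algebra $\H_{r,n-2f}$ \cite{DJM:cyc} to expand $M_\lambda T_{a_{p_i},n}\epsilon_{n-2f}(h_1)$ in the $M_\lambda T_{a_{p_j},n}$ with $\mu_\lambda(j)\gedom\mu_\lambda(i)$, plus terms of strictly larger $\H_{r,n-2f}$-shape, the latter lying in $\W_{r,n}^{\gdom(f,\lambda)}$ by \cite[5.8d]{RX}. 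In the addition case $s+1\le i\le m$ (so $\ell=f-1$ and $y_{\mu_\lambda(i)}^\lambda=E_{n-1}T_{n-1,b_{p_i}}E^{f-1,n-1}M_{\mu_\lambda(i)}$) the same conclusion would follow from Lemma~\ref{rightkey}(c),(d), the identity $E_{n-1}X_{n-1}^kE_{n-1}=\omega_{n-1}^{(k)}E_{n-1}$ used in the proof of Lemma~\ref{ewe} (to absorb $X_{n-1}$-powers), and Corollary~\ref{ewef}; part~(c) of Lemma~\ref{rightkey} is precisely what forces the element into $\W_{r,n}^{\gdom(f,\lambda)}$ whenever the shape of $\bfs_{n-2f}$ rises strictly above $\lambda$.

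For (ii) and (iii) I would argue by comparing ranks. By (i) the elements $y_{\mu_\lambda(i)}^\lambda T_{d(\bft)}T_dX^{\kappa_d}$, as $i$ ranges over $1,\dots,m$ and $(\bft,d,\kappa_d)$ over $\delta(\lambda,i)$, span a $\W_{r,n-1}$-submodule of $\Delta(f,\lambda)$; reorganising the $\Std(\lambda)$-indexed cellular basis of $\Delta(f,\lambda)$ according to the node of $\lambda$ occupied by the entry $n-2f$ (removal, $1\le i\le s$) or created by the extra cup of $E_{n-1}$ (addition, $s+1\le i\le m$) --- exactly as in \cite[\S4]{RS} --- shows that these generators already span $\Delta(f,\lambda)$, and the branching identity $\sum_{i=1}^m|\delta(\lambda,i)|=|\delta(f,\lambda)|$ of \cite{RS} then forces them to be an $R$-basis. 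Hence each $S^{\gedom\mu_\lambda(i)}$ is free on the generators attached to $\mu_\lambda(1),\dots,\mu_\lambda(i)$, the chain is genuinely a filtration, and (with $S^{\gedom\mu_\lambda(0)}:=0$) the $R$-linear map $\Delta(\ell,\mu_\lambda(i))\to S^{\gedom\mu_\lambda(i)}/S^{\gedom\mu_\lambda(i-1)}$ sending the cell basis element indexed by $(\bft,d,\kappa_d)\in\delta(\ell,\mu_\lambda(i))=\delta(\lambda,i)$ to $y_{\mu_\lambda(i)}^\lambda T_{d(\bft)}T_dX^{\kappa_d}+S^{\gedom\mu_\lambda(i-1)}$ is a surjection of free $R$-modules of the same rank $|\delta(\lambda,i)|$, hence an isomorphism of $R$-modules; the action formulas of step~(i), read modulo $S^{\gedom\mu_\lambda(i-1)}$, show that it intertwines the cell-module action on $\Delta(\ell,\mu_\lambda(i))$ with the $\W_{r,n-1}$-action on the subquotient, so it is a $\W_{r,n-1}$-isomorphism.

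The main obstacle is step~(i): one must check generator by generator that the rewriting never lands in a layer strictly below $\mu_\lambda(i)$ in the order $\gedom$ --- which would destroy the filtration --- and that every error term genuinely lies in $\W_{r,n}^{\gdom(f,\lambda)}$. This is exactly where Lemma~\ref{rightkey}(a),(c),(d), Corollary~\ref{ewef}, and the compatibility of the dominance order on $\Lambda^+_{r,n-1}$ with the Murphy-basis branching for $\H_{r,n-2f}$ carry the weight; when $f=0$ the statement degenerates to Dipper--James--Mathas's branching rule for the Specht modules of $\H_{r,n}$, and once $f>0$ has been dealt with the remaining manipulations are the same bookkeeping as in \cite[\S4]{RS}, which I would not reproduce in full.
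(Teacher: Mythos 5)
Your proposal follows essentially the same route as the paper's proof: invariance under $\W_{r,n-1}$ via Lemma~\ref{rightkey} and Corollary~\ref{ewef}, identification of $\Delta(\ell,\mu_\lambda(i))$ with the subquotients via left-multiplication maps $\phi_i$, and a rank count forcing these to be isomorphisms. The only stylistic difference is at the exhaustion step, where the paper shows $E^{f,n}M_\lambda = y_{\mu_\lambda(m)}^\lambda T^\ast_{n-1,n-2f+1}\in S^{\gedom\mu_\lambda(m)}$ directly and then invokes $\Delta(f,\lambda)\subset E^{f,n}M_\lambda\W_{r,n-1}\pmod{\W_{r,n}^{\gdom(f,\lambda)}}$, whereas you argue by reorganising the cellular basis according to the node touched by $n-2f$ (respectively the cup of $E_{n-1}$) and the branching count $\sum_i|\delta(\lambda,i)|=|\delta(f,\lambda)|$; both carry the same content.
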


\begin{proof} When $f=0$, each cell module $\Delta(0, \lambda)$ can
be considered as a cell module for $\H_{r, n}$. The result for $f=0$
has been given in \cite{AM:simples}. In the remainder of the proof,
we assume $f>0$.

 Using  arguments in the proof of \cite[4.9,
4.14]{RS}, we  can  prove that all $\Wmuieq$, $1\le i\le m$, are
$\W_{r, n-1}$-modules. Of course, we have  to use
Lemma~\ref{rightkey} instead of \cite[4.8, 4.11-4.13]{RS}. So,
$(0)\subseteq S^{\gedom\mu_\lambda(1)}\subseteq \cdots \subseteq
S^{\gedom\mu_\lambda (m)}$ is a filtration of $\B_{r,n-1}$--modules.

Let $ \phi_i:  \Delta(\ell, \mu_\lambda(i)) \rightarrow
\Wmuieq/S^{\unrhd \mu_\lambda(i-1)} $ be the $R$-linear map  sending
$E^{\ell,n-1}M_{\mu_\lambda(i)} T_{d(\t)} T_e X^{\kappa_e}\pmod
{\W_{r, n-1}^{\rhd (\ell, \mu_\lambda(i))}}$ to $
y_{\mu_\lambda(i)}^{\lambda} T_{d(\t)} T_e X^{\kappa_e} \pmod
{S^{\unrhd \mu_\lambda(i-1)}}$
 for all $ (\t, e, \kappa_e)\in \delta(\lambda, i)$.
$\phi_i$ is a $\B_{r,n-1}$--homomorphism since multiplying an
element on the left is a homomorphism of right modules.

 We claim
$\Delta(f,\lambda)=S^{\gedom\mu_\lambda(m)}$. In fact, by
Proposition~\ref{key} for $\s=\t^\lambda$ and
Definition~\ref{waff}(h), $\Delta(f, \lambda)\subset E^{f, n}
M_\lambda \B_{r, n-1} \pmod {\B_{r, n}^{\rhd (f, \lambda)}}$. Note
that $M_{\mu_\lambda(m)}=M_\lambda$. We have
$$\begin{aligned} E^{f, n} M_\lambda & =E_{n-1} T_{n-1, n-2f+1} E^{f-1, n-1}
M_{\mu_\lambda(m)} T_{n-1, n-2f+1}^\ast\\
&=y_{\mu_\lambda(m)}^\lambda T^\ast_{n-1, n-2f+1}\in
S^{\unrhd\mu_\lambda(m)}
.\\
\end{aligned} $$ Since $S^{\unrhd\mu_\lambda(m)}$ is a right $\B_{r,
n-1}$-module, $\Delta(f, \lambda)\subseteq
S^{\unrhd\mu_\lambda(m)}$.
 The inverse inclusion is trivial. This proves our claim. Counting the rank of
 $\Delta(f, \lambda)$  forces each  $\phi_i$ to be  an
 $R$-linear isomorphism.
\end{proof}

We  are going to recall the notion of $n$-updown tableaux in
\cite{AMR} in order to construct the JM-basis of $\W_{r, n}$.

Fix $(f, \lambda)\in \Lambda^+_{r, n}$. An $n$--updown
$\lambda$--tableau, or more simply an updown $\lambda$--tableau, is
a sequence $\tt=(\tt_0, \tt_1,\tt_2,\dots,\tt_n)$ of multipartitions
such that $\tt_0=\emptyset$,  $\tt_n=\lambda$ and  $\tt_i$ is
obtained from $\tt_{i-1}$ by either adding or removing a box, for
$i=1,\dots,n$. Let $\UPD_n(\lambda)$ be the set of all $n$--updown
$\lambda$--tableaux.

Given  $\tt\in \UPD_n(\lambda)$ with $(f, \lambda) \in
\Lambda_{r,n}^+$, define $f_j\in \mathbb N$ by declaring that
$\tt_j\in \Lambda_r^+(j-2f_j)$. So,  $0\le f_j\le \floor {j2}$.

Motivated by \cite{RS}, we define $\m_\tt=\m_{\tt_n}\in \B_{r, n}$
inductively by declaring that    $\m_{\tt_0}=1$ and
\begin{enumerate} \item
 $ \m_{\tt_i}=\sum_{j=a_{s,k-1}+1}^{a_{s,k}}q^{a_{s,k}-j} T_{j, a_s} \prod_{j=s}^{r-1}
(X_{a_j}-u_{j+1})T_{a_j,a_{j+1}}  T_{a_r,i} \m_{\tt_{i-1}}$ if
$\tt_i=\tt_{i-1}\cup p$ with $p=(s,k, \mu^{(s)}_k)$ and $a_{s,k} =
a_{s-1} + \sum_{j=1}^k \mu_j^{(s)}$.

\item $\m_{\tt_i}= E_{i-1} T_{i-1, b_{s,k}} \m_{\tt_{i-1}}$ if  $\tt_{i-1}=\tt_{i}\cup p$ with $p=(s,k,\nu_k^{(s)})$
and $b_{s,k}=b_{s-1} + \sum_{j=1}^k \nu^{(s)}_j$.
\end{enumerate} where $\mu=\tt_i$ and $\nu=\tt_{i-1}$ with $[\mu]=[a_1, a_2, \dots,
a_r]$ and $[\nu]=[b_1, b_2, \dots, b_r]$.

Now, we define $b_{\tt_i}$ inductively such that
$$\m_{\tt}\equiv E^{f, n}
M_{\lambda} b_{\tt_n}\pmod {\W_{r, n}^{\rhd(f, \lambda)}}.$$  We
write $\m_\lambda=E^{f, n} M_{\lambda}$. Suppose that
$\tt_{n-1}=\mu$, $[\lambda]=[a_1, a_2, \dots, a_r]$ and $[\mu]=[b_1,
b_2, \dots, b_r]$. We have $b_{\tt_0}=1$
 and {\small
\begin{equation}\label{des-b} b_{\tt_{n}}=\begin{cases} T_{a_{\ell,k},
n}b_{\tt_{n-1}}, & \text{ if
$\tt_{n}= \tt_{n-1}\cup\{ (\ell, k, \lambda^{(\ell)}_k)\}$,}\\
T_{n-1,b_{r-1}} h b_{\tt_{n-1}}, &
\text{ if $\tt_{n-1}= \tt_{n}\cup\{ (s, k, \mu^{(s)}_k)\}$,} \\
T_{n-1, b_{r, k}} \sum_{j=b_{r, k-1}+1}^{b_{r, k}} q^{b_{r, k}- j}
T_{b_{r, k}, j}b_{\tt_{n-1}},
&\text{ if  $\tt_{n-1}= \tt_{n}\cup\{ (r, k, \mu^{(r)}_k)\}$,}\\
\end{cases}
\end{equation}}
where $s\neq r$ and {\small $$ h =\prod_{j=r}^{s+2}
\{(X_{b_{j-1}}-u_j)T_{b_{j-1},b_{j-2}}\}\times
(X_{b_s}-u_{s+1})T_{b_s,b_{s,k}}\sum_{j=b_{s,k-1}+1}^{b_{s,k}}q^{b_{s,k}-j}
T_{b_{s,k}, j}.
$$}
We also use $b_{\tt}$ instead of $b_{\tt_n}$.

 For any $\ss, \tt\in \UPD_n(\lambda)$, we identify
$\ss_{i}$ (resp. $\tt_i$) with $(f_i, \ss_i)$ (resp. $(g_i, \tt_i)$)
where $(f_i, \ss_i), (g_i, \tt_i) \in \Lambda^+_{r, i}$. We write
$\ss\overset {k}\succ\tt$ if  $\ss_j\rhd\tt_j$ and $\ss_l=\tt_l$ for
$j+1\le l\le n$ and $j\ge k$. We write $\ss\succ\tt$ if there is a
positive integer  $ k\le n-1$ such that $\ss\overset {k}\succ\tt$.
In \cite{RS}, we have  verified that $\ss\succ \vv$ if $\ss\succ
\tt$ and $\tt \succ\vv$. So, $\succ $ can be refined to be a
\textsf{linear order} on $\UPD_n(\lambda)$.

There is a partial order $\unrhd$ on $\UPD_n(\lambda)$. More
explicitly, we have $\ss\unrhd\tt$ if $\ss_i\unrhd\tt_i, 1\le i\le
n$. We write $\ss\rhd\tt$ if $\ss\unrhd\tt$ and $\ss\neq \tt$.

 There is a unique element, say $\tt^\lambda\in \UPD_n(\lambda)$,
which is maximal with respect to $\succ$. More explicitly, we have
$\tt^\lambda_{2i}=\emptyset$ and $\tt^\lambda_{2i-1}=((1),
\emptyset, \dots, \emptyset)$ for $1\le i\le f$ and
$\tt^\lambda_{j}=\t^\lambda_{j-2f}$ for $2f+1\le j\le n$.

Let $E_{i, j}= E_iE_{i+1} \cdots E_{j-1}$ for  $i<j$.  If $i=j$, we
set $E_{ij}=1$.  When $i>j$, we define $E_{i,
 j}= E_{i-1}E_{i-2}\cdots E_{j}$. So,

 \begin{equation}\label{mlambda} \m_{\tt^{\lambda}} = E^{f, n}
M_{\lambda}\prod_{i=1}^f E_{n-2(f-i)-1, 2i-1}
\prod_{j=2}^r\prod_{k=1}^f (X_{2k-1}-u_j).\end{equation}

Suppose   $\tt\in \UPD_n(\lambda)$ with $(f, \lambda)\in
\Lambda^+_{r,n}$. Let
\begin{equation}\label{residue} c_\tt(k)= \begin{cases}
   u_sq^{2(j-i)},  &\text{if }\tt_k=\tt_{k-1}\cup (s,i,j),\\
   u_s^{-1}q^{-2(j-i)}, &\text{if }\tt_{k-1}=\tt_{k}\cup(s,i,j),
\end{cases}\end{equation}
 and  \begin{equation} c_\lambda(p)= \begin{cases} \label{content}
   u_sq^{2(j-i)},  &\text{if $p=(s, i, j)$ is an addable node of $\lambda$},\\
   u_s^{-1}q^{-2(j-i)},  &\text{if $p=(s, i, j)$ is a removable node of $\lambda$}.\\
\end{cases}\end{equation}

In the remainder  of this paper, unless otherwise stated, we always
use $\m_\tt$ instead of  $\m_\tt + \B_{r, n}^{\rhd (f, \lambda)}\in
\Delta(f, \lambda)$.

\begin{Prop}\label{murphybasis}\begin{enumerate}\item
 $\{\m_\tt  \mid \tt\in
\UPD_n(\lambda)\}$ is an $R$-basis of $\Delta(f, \lambda)$ for any
$(f, \lambda)\in \Lambda_{r,n}^+$. \item $\m_\tt (\prod_{i=1}^n X_i)
= \prod_{k=1}^n c_{\tt^{\lambda}}(k) \m_\tt, \forall \ \tt\in
\UPD_n(\lambda)$.
\end{enumerate}
\end{Prop}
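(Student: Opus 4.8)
The plan is to establish part (a) first, since part (b) will then follow by a direct eigenvalue computation on the single basis element $\m_{\tt^\lambda}$ together with the triangularity already built into the basis. For part (a), I would argue by a double induction: the outer induction is on $n$, and the inner induction is on the position $i$ in the filtration of Theorem~\ref{cellf}. The starting point is that the $\m_\tt$ are indexed by $\UPD_n(\lambda)$, whose cardinality equals $\dim \Delta(f,\lambda)$ by the standard branching count for updown tableaux (which matches the rank count recorded in Theorem~\ref{cellf} and in the proof that the $\phi_i$ are isomorphisms); so it suffices to prove that $\{\m_\tt \mid \tt\in\UPD_n(\lambda)\}$ spans $\Delta(f,\lambda)$. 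To do this, partition $\UPD_n(\lambda)$ according to the value $\tt_{n-1}$: each $\tt\in\UPD_n(\lambda)$ restricts to an element of $\UPD_{n-1}(\mu_\lambda(i))$ for exactly one $i$, $1\le i\le m$, where $\mu_\lambda(i)$ runs over the multipartitions obtained from $\lambda$ by adding or removing a box, ordered as in \eqref{cellfil}. I would show by induction on $i$ that the image of $\{\m_\tt \mid \tt_{n-1}=\mu_\lambda(j)\text{ for some }j\le i\}$ spans $S^{\gedom\mu_\lambda(i)}$.

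The inductive step is where the definitions of $\m_{\tt_i}$ and $b_{\tt_n}$ in \eqref{des-b} are used. Given $\tt$ with $\tt_{n-1}=\mu_\lambda(i)$ and $\tt' = (\tt_0,\dots,\tt_{n-1})\in\UPD_{n-1}(\mu_\lambda(i))$, one checks directly from the inductive formulas that $\m_\tt \equiv y^\lambda_{\mu_\lambda(i)}\, b_{\tt'_{n-1}} \cdot(\text{a specific }T\text{- and }X\text{-word})$ modulo $S^{\unrhd\mu_\lambda(i-1)}$ and modulo $\W_{r,n}^{\rhd(f,\lambda)}$; here $b_{\tt'_{n-1}}$ together with the factor from \eqref{des-b} reassembles into $b_\tt$, and the element $y^\lambda_{\mu_\lambda(i)}$ is exactly the generator of $S^{\gedom\mu_\lambda(i)}$ appearing in Theorem~\ref{cellf}. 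Thus, modulo $S^{\unrhd\mu_\lambda(i-1)}$, the set $\{\m_\tt\mid \tt_{n-1}=\mu_\lambda(i)\}$ is, under the isomorphism $\phi_i$ of Theorem~\ref{cellf}, precisely the image of a Murphy-type basis of $\Delta(\ell,\mu_\lambda(i))$ for $\H_{r,n-1}$-style data — which by the outer induction hypothesis (for $n-1$) is an $R$-basis of $\Delta(\ell,\mu_\lambda(i))$. Applying $\phi_i^{-1}$ and adding back the span coming from $j<i$ closes the inner induction and yields that the $\m_\tt$ span $\Delta(f,\lambda)$. Combined with the cardinality count this proves (a).

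For part (b), note first that $\prod_{i=1}^n X_i$ is central in $\W_{r,n}$ — this follows from the anti-symmetry relations $E_iX_iX_{i+1}=E_i=X_iX_{i+1}E_i$ of Definition~\ref{waff}(j) together with the skein relations (6a)(6b), which force $\prod X_i$ to commute with every $T_i$ and trivially with every $X_j$. Since $\Delta(f,\lambda)$ is generated as a $\W_{r,n}$-module by $\m_{\tt^\lambda}$ and $\prod_i X_i$ is central, it acts as a scalar on all of $\Delta(f,\lambda)$, so it suffices to compute that scalar on $\m_{\tt^\lambda}$. Using the explicit form \eqref{mlambda} of $\m_{\tt^\lambda}$ and the unwrapping/anti-symmetry relations, $E^{f,n}M_\lambda \prod_{i=1}^n X_i$ collapses: each $E_j$-factor absorbs a pair $X_jX_{j+1}$ via (j), and on the remaining Ariki-Koike part $M_\lambda$ the product $\prod X_i$ acts (through $\epsilon_{n-2f}$) by $\prod_p c_\lambda(p)$ over the addable/removable nodes, which by the defining relation $\varrho^{-1}=\alpha\prod u_\ell$ and a bookkeeping of the contents in \eqref{content} equals $\prod_{k=1}^n c_{\tt^\lambda}(k)$. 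Hence $\m_\tt(\prod_i X_i) = \prod_{k=1}^n c_{\tt^\lambda}(k)\,\m_\tt$ for every $\tt$.

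The main obstacle is the bookkeeping in the inductive step of part (a): one must verify that the ad hoc reduction formulas \eqref{des-b} for $b_{\tt_n}$ really do line up — term by term, including all the $q$-powers and the $(X-u_j)$-twists — with the Murphy basis $E^{\ell,n-1}M_{\bfS\tt}T_dX^{\kappa_d}$ of $\Delta(\ell,\mu_\lambda(i))$ pushed forward through $\phi_i$, so that no extra lower-order terms (with respect to $\succ$ and $\unrhd$) spoil the triangularity needed to conclude that a spanning set of the right size is a basis. This is routine but delicate, and is exactly the place where Lemma~\ref{rightkey} and the structure of $y^\lambda_{\mu_\lambda(i)}$ in \eqref{up} do the real work; I expect to handle the two cases of \eqref{des-b} (adding a box versus removing a box, with the $s=r$ subcase) separately, as in \cite{RS}.
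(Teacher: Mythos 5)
Your plan for part (a) is the right unfolding of Theorem~\ref{cellf}; the paper treats (a) as following ``immediately'' from that theorem, and what you describe (a double induction threading through the filtration $S^{\unrhd\mu_\lambda(1)}\subseteq\cdots\subseteq S^{\unrhd\mu_\lambda(m)}$ and matching $\m_\tt$ with the image of a Murphy-type basis under $\phi_i$) is exactly the undone elaboration, so no issue there beyond the admitted bookkeeping.

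Part (b), however, has a genuine gap and takes a route the paper deliberately avoids. You assert that $\prod_{i=1}^n X_i$ ``acts as a scalar on all of $\Delta(f,\lambda)$'' on the grounds that it is central and $\Delta(f,\lambda)$ is cyclic, generated by $\m_{\tt^\lambda}$; this fails on two counts. First, $\m_{\tt^\lambda}$ is not the cyclic generator of $\Delta(f,\lambda)$: the generator is $\m_\lambda=E^{f,n}M_\lambda+\W_{r,n}^{\rhd(f,\lambda)}$, and by \eqref{mlambda} one has $\m_{\tt^\lambda}=\m_\lambda\cdot\prod_{i=1}^f E_{n-2(f-i)-1,2i-1}\prod_{j=2}^r\prod_{k=1}^f (X_{2k-1}-u_j)$, a right multiple of $\m_\lambda$ by non-invertible $E$'s and non-invertible factors $X_{2k-1}-u_j$; over a general commutative ring $R$ there is no reason for $\m_{\tt^\lambda}$ to generate. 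Second, over a general $R$, centrality plus cyclicity does not by itself yield scalar action; one first needs to know that the actual generator $\m_\lambda$ is an eigenvector for $\prod_i X_i$, and only then does centrality propagate the eigenvalue to every $\m_\tt=\m_\lambda b_\tt$. Your direct computation is carried out on $\m_{\tt^\lambda}$, not on $\m_\lambda$, so it does not close this circle. The paper sidesteps both difficulties: it passes to the fraction field of $R_0=\mathbb Z[u_1^\pm,\dots,u_r^\pm,q^\pm,(q-q^{-1})^{-1}]$ with the $u_i$, $q$ indeterminate, where $\W_{r,n}$ is split semisimple and $\Delta(f,\lambda)$ is irreducible, uses Schur's lemma to get scalar action (so that any nonzero vector, including $\m_{\tt^\lambda}$, detects the scalar), reads off the scalar from the eigenvalue computation $\m_{\tt^\lambda}X_i=c_{\tt^\lambda}(i)\m_{\tt^\lambda}$ (for $2f+1\le i\le n$, with $u_1^{(-1)^{i-1}}$ for $1\le i\le 2f$), and then deduces the identity over an arbitrary $R$ by base change since both sides are polynomial in the parameters. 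If you wish to keep your centrality route, you would need to replace $\m_{\tt^\lambda}$ with $\m_\lambda$ and verify directly that $\m_\lambda\prod_i X_i$ is an $R$-multiple of $\m_\lambda$ (which is less clean than computing on $\m_{\tt^\lambda}$, since $\m_\lambda=E^{f,n}M_\lambda$ lacks the extra $E$- and $(X-u_j)$-factors that make the eigenvalue computation collapse so neatly); otherwise the paper's specialization argument is the safer path.
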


\begin{proof} (a) follows immediately from Theorem~\ref{cellf}. In
order to prove (b), we consider $\W_{r, n}$  over the
 field of fraction of $R_0$ where
 $R_0=\mathbb Z[u_1^{\pm}, u_2^{\pm}, \cdots,
u_r^{\pm},q^{\pm},(q-q^{-1})^{-1}]$. Note that we are assuming that
 $u_1, u_2, \cdots, u_r,q$ are indeterminates.
  By the counterpart of \cite[5.3]{AMR} for $\W_{r, n}$, we have that  $\W_{r,
n}$ is split semisimple. Therefore, each cell module of $\W_{r, n}$
is irreducible. In particular, $\Delta(f,\lambda)$ is irreducible.
By Definition~\ref{waff}, we have that $\prod_{i=1}^n X_i$ is
central in $\B_{r,n}$. By Schur's Lemma, $\prod_{i=1}^n X_i$ acts on
$\Delta(f,\lambda)$ as a scalar. This enables us to consider the
special case $\tt=\tt^\lambda$ without loss of generality. By direct
computation, $$\m_{\tt^{\lambda}} X_{i}=\begin{cases}
 u_1^{(-1)^{i-1}}\m_{\tt^{\lambda}}, & \text{ if $1\le i\le 2f$,}\\
c_{\tt^{\lambda}}(i) \m_{\tt^\lambda}, & \text{ if $2f+1\le i\le n$}.\\
\end{cases}
$$
So,
 $\m_\tt (\prod_{i=1}^n X_i) = \prod_{k=1}^n c_{\tt^{\lambda}}(k) \m_\tt$. By
(a), $\m_\tt$ is an $R_0$-basis. So (b) holds over $R_0$. Finally,
we use standard arguments on base change to get  (b)  over a
commutative ring $R$.
\end{proof}

\begin{Theorem}(cf. \cite[5.12]{RS}) \label{trian}Let  $\tt \in \UPD_n(\lambda)$ with
 $(f, \lambda) \in \Lambda^+_{r,n}$. For any $k$, $1 \le k\le
n$,  there are some $\uu\in \UPD_n(\lambda)$ and $a_\uu\in R$ such
that
$$ \m_\tt X_k = c_{\tt}(k) \m_\tt+\sum_{
\uu\overset{k-1} \succ \tt} a_{\uu} \m_\uu.$$
\end{Theorem}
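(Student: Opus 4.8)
The plan is to run an induction on $n-k$, i.e. to prove the claim first for $k=n$ and then descend to smaller $k$, using the inductive construction of $\m_\tt$ together with the skein and commutation relations in Definition~\ref{waff}. For the base case $k=n$, I would split according to the last step $\tt_{n-1}\to\tt_n$. If $\tt_n=\tt_{n-1}\cup p$ is an \emph{addition}, then by construction $\m_{\tt_n}= (\cdots) T_{a_r,n}\,\m_{\tt_{n-1}}$, where the omitted prefactor involves only $T_j$ and $X_j$ with $j<n$; since $X_n$ commutes with all of these (Definition~\ref{waff}(3c)) and $T_{a_r,n}X_n$ can be rewritten via the skein relations (6) so that $X_n$ moves past $T_{a_r,n}$, producing $X_{a_r}$ acting on $\m_{\tt_{n-1}}$ plus a correction term supported on updown tableaux that agree with $\tt$ in positions $\ge n$ but are strictly dominated at position $n-1$. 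The value $c_\tt(n)$ emerges exactly as the eigenvalue coming from $u_s q^{2(j-i)}$ on $\m_{\tt_{n-1}}$. If $\tt_{n-1}=\tt_n\cup p$ is a \emph{removal}, then $\m_{\tt_n}=E_{n-1}T_{n-1,b_{s,k}}\m_{\tt_{n-1}}$, and one uses the anti-symmetry relations $E_{n-1}X_nX_{n-1}=E_{n-1}$ (Definition~\ref{waff}(11)) together with the tangle relations $E_{n-1}T_{n-1}=\varrho E_{n-1}$ to see that $X_n$ acts on $E_{n-1}(\cdots)$ as $X_{n-1}^{-1}$ up to the scalar, producing $u_s^{-1}q^{-2(j-i)}=c_\tt(n)$ and again a lower-order correction.

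For the inductive step, suppose the result holds for $k+1,\dots,n$ and consider $\m_\tt X_k$ with $k<n$. The key point is that $\m_\tt$ is built from $\m_{\tt\downarrow_k}$ (an updown $\tt_k$-tableau in $\W_{r,k}$) by left-multiplying successively with the operators in (1)/(2) defining the later steps; since $X_k$ only fails to commute with $T_{k-1}$ and $T_k$ among the generators, I would push $X_k$ to the right through the factors that build steps $k+1,\dots,n$. Each time $X_k$ meets such a factor, the braid and skein relations rewrite the product as the same factor with $X_k$ (or $X_{k+1}$, etc.) moved inward, plus terms in which an $E_i$ has appeared or a simple transposition has been dropped; by Theorem~\ref{cellf} (the cell filtration) and the linear order $\succ$ on $\UPD_n(\lambda)$, all such error terms are $R$-combinations of $\m_\uu$ with $\uu\overset{k-1}\succ\tt$. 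One is then reduced to the action of $X_k$ on $\m_{\tt\downarrow_k}$ inside $\W_{r,k}$, which is the base case already handled (with $n$ replaced by $k$), giving eigenvalue $c_\tt(k)=c_{\tt\downarrow_k}(k)$ plus corrections of the same controlled form. Throughout, Proposition~\ref{murphybasis}(a) guarantees that $\{\m_\uu\}$ is a basis, so the triangular expansion is well-defined, and Proposition~\ref{murphybasis}(b) provides a useful consistency check by summing over $k$.

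The main obstacle will be bookkeeping the error terms produced each time $X_k$ is commuted past a $T_i$: the skein relations (6a), (6b) introduce terms involving $E_i-1$, and one must verify carefully that the resulting elements, after reduction modulo $\W_{r,n}^{\rhd(f,\lambda)}$ and re-expression in the $\m_\uu$ basis, only involve $\uu$ with $\uu\overset{k-1}\succ\tt$ — i.e. that no $\uu$ with $\uu_j\rhd\tt_j$ for some $j<k-1$, nor any $\uu$ incomparable to $\tt$, ever appears with nonzero coefficient. This is exactly where the linear refinement of $\succ$ and the compatibility of the cell filtration in Theorem~\ref{cellf} with restriction to $\W_{r,k}$ are used, and it mirrors the argument of \cite[5.12]{RS}; I would follow that argument closely, substituting Lemma~\ref{rightkey} and Theorem~\ref{cellf} for their analogues in \cite{RS}, and leave the purely computational verifications to the reader.
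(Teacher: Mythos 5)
Your proposal is a genuinely different route from the one the paper actually takes, and it misses the key simplification. You propose to compute $\m_\tt X_k$ by explicitly pushing $X_k$ through the generator factors that build $\m_\tt$ step by step — i.e.\ a brute-force commutator computation using the skein, tangle and antisymmetry relations, with a large book-keeping burden that you yourself flag as the ``main obstacle'' and leave unresolved. The paper avoids all of this by exploiting two facts simultaneously: (i) $\prod_{i=1}^n X_i$ is central, so by Proposition~\ref{murphybasis}(b) it acts on the whole cell module as the scalar $\prod_{k=1}^n c_\tt(k)$; (ii) Theorem~\ref{cellf} provides a $\W_{r,n-1}$-filtration $S^{\unrhd\mu_\lambda(1)}\subseteq\cdots\subseteq S^{\unrhd\mu_\lambda(m)}$ whose successive quotients are cell modules for the smaller algebra, on which the central element $\prod_{j=1}^{n-1}X_j$ again acts by a scalar. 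If $\tt_{n-1}=\mu_\lambda(i)$, then modulo $S^{\unrhd\mu_\lambda(i-1)}$ one has $\m_\tt \prod_{j<n}X_j \equiv \prod_{j<n}c_\tt(j)\,\m_\tt$, and dividing this congruence into the global identity $\m_\tt\prod_{j\le n}X_j = \prod_{j\le n}c_\tt(j)\,\m_\tt$ immediately gives $\m_\tt X_n - c_\tt(n)\m_\tt \in S^{\unrhd\mu_\lambda(i-1)}$, i.e.\ a combination of $\m_\uu$ with $\uu\overset{n-1}\succ\tt$; iterating down the chain of filtrations for $\W_{r,n-2},\W_{r,n-3},\dots$ handles all smaller $k$. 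No generator-level commutator computation is needed.

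Beyond being far less efficient, your sketch has genuine soft spots: in the ``addition'' base case the claim that pushing $X_n$ through $T_{a_r,n}$ ``produces $X_{a_r}$ acting on $\m_{\tt_{n-1}}$'' and that the eigenvalue $c_\tt(n)$ then emerges is not justified — $\m_{\tt_{n-1}}$ is not an $X_{a_r}$-eigenvector, and the factor in (1) of the inductive definition involves further $X$- and $T$-terms that your argument does not track; and in the ``removal'' case you assert $X_n$ effectively acts as $X_{n-1}^{-1}$ without dealing with the $T_{i-1,b_{s,k}}$ factor sitting between $E_{n-1}$ and $\m_{\tt_{n-1}}$. These could perhaps be repaired along the lines of~\cite[5.12]{RS}, but as written your proposal is an outline with the crucial error-term estimates deferred, whereas the paper's proof is complete precisely because the centrality trick renders those estimates unnecessary. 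If you want to keep your approach, you should actually carry out the base-case computations and verify the claimed triangularity of the error terms; otherwise, the centrality-plus-filtration argument is the one to internalize.
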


\begin{proof} Note that $\prod_{k=1}^n c_\tt(k)=\prod_{k=1}^n c_{\tt^\lambda}(k)$ for any
$\tt\in \UPD_n(\lambda)$.  By  Lemma~\ref{murphybasis}(b), $\m_\tt
\prod_{k=1}^n X_k =\prod_{k=1}^n c_\tt(k)\m_\tt $. We consider the
action of $\prod_{i=1}^{n-1} X_i$ on $\m_\tt$. We use the $\B_{r,
n-1}$-filtration of $\Delta(f, \lambda)$ in Theorem~\ref{cellf}. By
Lemma~\ref{murphybasis}(b),
$$
 \m_\tt\prod_{j=1}^{n-1} X_j-\prod_{j=1}^{n-1} c_\tt(j)\m_\tt\in S^{\unrhd
\mu_\lambda(i-1)}$$ where $\mu_\lambda(j), 1\le j\le m$ are defined
in Theorem~\ref{cellf} with $\mu_\lambda(i)=\tt_{n-1}$. Since
$S^{\unrhd \mu_\lambda(i-1)}$ is a right $\W_{r, n-1}$-module,
\begin{equation}\label{resn}
\m_\tt X_n c_\tt(n)^{-1}-\m_\tt=\m_\tt\prod_{j=1}^{n-1} c_\tt(j)
\prod_{j=1}^{n-1} X_j^{-1}-\m_\tt\in S^{\unrhd
\mu_\lambda(i-1)}.\end{equation}    So, Theorem~\ref{trian} holds
for $k=n$. When we deal with
 the case $k=n-1$, we consider the filtration of $\B_{r,
n-2}$-submodules of $S^{\unrhd \mu_\lambda(i)}/S^{\unrhd
\mu_\lambda(i-1)}$. Note that $S^{\unrhd \mu_\lambda(i)}/S^{\unrhd
\mu_\lambda(i-1)}\cong \Delta(\ell, \mu_\lambda(i))$ where
$\Delta(\ell, \mu_\lambda(i))$ is the cell module for $\B_{r, n-1}$
with respect to $(\ell, \mu_\lambda(i))\in \Lambda_{r, (n-1)}^+$. By
similar arguments as above we can verify the result for $k=n-2$.
Using these arguments repeatedly yields the required formula for
general $k$.
\end{proof}

 Standard arguments prove the following result (cf.
 \cite[2.7]{RS:discriminants}).

\begin{Theorem}\label{murphynaza} For each $\tt, \ss\in \UPD_{n}
(\lambda)$ with $(f, \lambda)\in \Lambda_{r, n}^+$,
 let
$\m_{\ss\tt}=b_{\ss}^\ast \m_\lambda b_{\tt}$,  where $\ast: \B_{r,
n}\rightarrow \B_{r, n}$ is the $R$-linear anti-involution which
fixes the generators $T_i, X_j$ for $1\le i\le n-1$ and $1\le j\le
n$. \begin{enumerate} \item  $\mathscr M=\{\m_{\ss \tt} \mid \ss,
\tt\in \UPD_n(\lambda), (f, \lambda)\in \Lambda^+_{r, n}\}$ is a
cellular basis of  $\B_{r, n}$ over $R$.
\item $ \m_{\ss\tt} X_k \equiv c_{\tt}(k) \m_{\ss\tt}+\sum_{
\uu\overset{k-1} \succ \tt} a_{\uu} \m_{\ss\uu} \pmod {\W_{r,
n}^{\rhd (f, \lambda)} }$.\end{enumerate}
\end{Theorem}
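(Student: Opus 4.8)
The plan is to build $\mathscr M$ by ``bootstrapping'' from the seminormal/JM-basis $\{\m_\tt\}$ of each cell module $\Delta(f,\lambda)$ produced in Proposition~\ref{murphybasis} and Theorem~\ref{trian}, in exact parallel with \cite[2.7]{RS:discriminants}. First I would observe that the elements $b_\tt$ were defined precisely so that $\m_\tt\equiv\m_\lambda b_\tt\pmod{\W_{r,n}^{\rhd(f,\lambda)}}$, so that $\m_{\ss\tt}=b_\ss^\ast\m_\lambda b_\tt$ reduces, modulo $\W_{r,n}^{\rhd(f,\lambda)}$, to something that behaves like $b_\ss^\ast$ acting on the left of a cell module element. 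The key structural input is that $\m_\lambda=E^{f,n}M_\lambda$ is $\ast$-invariant up to lower terms (indeed $M_\lambda=M_{\t^\lambda\t^\lambda}$ is literally $\ast$-fixed and $E^{f,n}$ is $\ast$-fixed), which gives property (2) of Definition~\ref{GL}: $(\m_{\ss\tt})^\ast=b_\tt^\ast\m_\lambda^\ast b_\ss=b_\tt^\ast\m_\lambda b_\ss=\m_{\tt\ss}$ modulo higher-order ideal terms, and one upgrades this to an honest identity by the usual cellular-algebra argument once the basis property is known.

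Next I would establish (1). The cellularity of $\W_{r,n}$ is already in hand from Theorem~\ref{W cellular} with the basis $\mathscr C$; the point is that $\mathscr M$ is a second cellular basis adapted to the JM-elements $X_k$. The counting is immediate: $|\mathscr M|=\sum_{(f,\lambda)\in\Lambda_{r,n}^+}|\UPD_n(\lambda)|^2$, and by Proposition~\ref{murphybasis}(a) each $\{\m_\tt\mid\tt\in\UPD_n(\lambda)\}$ is an $R$-basis of $\Delta(f,\lambda)$, whose rank matches that of the cell module attached to $(f,\lambda)$ in the datum of Theorem~\ref{W cellular}; hence $|\mathscr M|=\rank\W_{r,n}$. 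So it suffices to show $\mathscr M$ spans, and for this I would run the standard downward induction on the ideal filtration $\W_{r,n}=\W_{r,n}^0\supset\cdots$ refined by the dominance order on $\Lambda_{r,n}^+$: modulo $\W_{r,n}^{\rhd(f,\lambda)}$, the span of $\{b_\ss^\ast\m_\lambda b_\tt\}$ over $\ss,\tt\in\UPD_n(\lambda)$ contains $\m_\lambda\W_{r,n}$ (right-module generation, using that $\{\m_\tt b_{\tt'}^{-1}\}$—or rather the action computed via Theorem~\ref{cellf}'s filtration—exhausts $\Delta(f,\lambda)$) and, applying $\ast$, also $\W_{r,n}\m_\lambda$; a two-sided argument then shows it contains the image of $\W_{r,n}^{\gedom(f,\lambda)}/\W_{r,n}^{\rhd(f,\lambda)}$. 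Feeding this back through the filtration gives spanning, hence a basis. The cellular axiom (3)—that $\m_{\ss\tt}a\equiv\sum_\uu r_{\tt\uu}(a)\m_{\ss\uu}$ with coefficients independent of $\ss$—then follows because the right action of $a\in\W_{r,n}$ on $\m_{\ss\tt}$ modulo $\W_{r,n}^{\rhd(f,\lambda)}$ factors through the right action of $a$ on $\Delta(f,\lambda)$ expressed in the basis $\{\m_\tt\}$, which is visibly $\ss$-independent; alternatively one cites the general fact that any $R$-basis related to a given cellular basis by a unitriangular change (with respect to a refinement of the order) adapted to $\ast$ is again cellular.

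Finally, part (2) is essentially a restatement of Theorem~\ref{trian} lifted from $\Delta(f,\lambda)$ to $\W_{r,n}$: since $\m_{\ss\tt}\equiv b_\ss^\ast\cdot(\m_\tt)\pmod{\W_{r,n}^{\rhd(f,\lambda)}}$ with $\m_\tt\in\Delta(f,\lambda)$, and $X_k$ acts on the right, we multiply the identity $\m_\tt X_k=c_\tt(k)\m_\tt+\sum_{\uu\overset{k-1}\succ\tt}a_\uu\m_\uu$ of Theorem~\ref{trian} on the left by $b_\ss^\ast$ and reinterpret each $b_\ss^\ast\m_\uu$ as $\m_{\ss\uu}$ modulo the higher ideal; the triangularity with respect to $\overset{k-1}\succ$ and the same eigenvalue $c_\tt(k)$ are inherited verbatim. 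The main obstacle, as in \cite{RS:discriminants}, is the bookkeeping in the spanning step: one must check carefully that the left multiplications $b_\ss^\ast(-)$ together with the right-module structure of $\Delta(f,\lambda)$ really do generate the whole layer $\W_{r,n}^{\gedom(f,\lambda)}/\W_{r,n}^{\rhd(f,\lambda)}$ and that no information is lost when passing between $M_{\ss\tt}$, $E^{f,n}M_{\ss\tt}$ and the $\m$-notation across the isomorphisms of Theorem~\ref{cellf}; everything else is a routine transcription of \cite[2.7]{RS:discriminants} with Lemma~\ref{rightkey} and Corollary~\ref{ewef} supplying the cyclotomic-BMW analogues of the Nazarov--Wenzl computations.
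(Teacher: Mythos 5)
The paper's own ``proof'' is a one-line citation to the standard argument of \cite[2.7]{RS:discriminants}, and your proposal is precisely a correct unpacking of that standard argument: you build $\mathscr M$ from the filtered JM-basis $\{\m_\tt\}$ of each cell module (Proposition~\ref{murphybasis}, Theorem~\ref{cellf}), observe $\m_\lambda^\ast=\m_\lambda$ (since $M_\lambda$ is $\ast$-fixed and the pairwise commuting $E_i$'s making up $E^{f,n}$ are each $\ast$-fixed via the Kauffman relation), get the basis by counting and spanning through the ideal filtration, and deduce part (2) by left-multiplying Theorem~\ref{trian} by $b_\ss^\ast$. This is the same route the paper intends; no substantive difference.
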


\begin{Remark} Note that $\succ$ is a linear order on $\UPD_n(\lambda)$. So,
 $\mathscr M$ is a JM-basis and $\{X_1, \dots, X_n\}$ is a family of JM-element in the sense of
\cite[2.4]{M:semi}.\end{Remark}

 Given two partitions $\lambda, \mu$, write $\lambda\ominus \mu$ if
either $\lambda\subset \mu$ and $\mu\setminus \lambda=p$ for some
removable node $p$ of $\mu$  or $\lambda\supset \mu$ and
$\lambda\setminus \mu=p$ for some removable node $p$ of $\lambda$.

Given an $\ss\in \UPD_n(\lambda)$ and a positive integer $k<n$. If
$\ss_k\ominus\ss_{k-1}$ and $\ss_{k+1}\ominus\ss_k$ are in different
rows and in different columns then we define, following \cite{AMR},
$\ss s_k$ to be the updown $\lambda$--tableau
$$\ss s_k=(\ss_1, \cdots,
\ss_{k-1}, \tt_{k}, \ss_{k+1}, \cdots, \ss_{n})$$ where $\tt_{k}$ is
the multipartition which is uniquely determined by the conditions
$\tt_k\ominus\ss_{k+1}=\ss_{k-1}\ominus \ss_{k}$ and
$\ss_{k-1}\ominus\tt_{k}=\ss_{k}\ominus \ss_{k+1}$. If the nodes
$\ss_k\ominus\ss_{k-1}$ and $\ss_{k+1}\ominus\ss_k$ are both in the
same row, or both in the same column, then $\ss s_k$ is not defined.

\begin{Lemma}\label{mts}(cf. \cite[5.13]{RS})
Suppose that $\tt\in\UPD_n(\lambda)$ with $\tt_{i-2}\neq \tt_i$ and
$\tt s_{i-1}\lhd\tt$.
\begin{itemize}
\item[a)]If $\tt_{i-2}\subset\tt_{i-1}\subset\tt_i$, then
$\m_{\tt}T_{i-1}=\m_{\tt s_{i-1}}+\sum_{\uu\overset{i-1}\succ \tt
s_{i-1}}a_{\uu}\m_{\uu}$ for some scalars $a_\uu\in R$.

\item[b)]If $\tt_{i-2}\supset\tt_{i-1}\subset\tt_i$ such that
$(\tilde p,\ell)>(p,k)$ where
$\tt_{i-2}\setminus\tt_{i-1}=(p,k,\nu_k^{(p)})$,
$\tt_{i}\setminus\tt_{i-1}=(\tilde p,\ell,\mu_{\ell}^{(\tilde p)})$,
$\tt_{i-2}=\nu$ and $\tt_i=\mu$, then $\m_{\tt}T_{i-1}^{-1}=\m_{\tt
s_{i-1}}+\sum_{\uu\overset{i-1}\succ \tt s_{i-1}}a_{\uu}\m_{\uu}$
for some scalars $a_\uu\in R$.
\end{itemize}
\end{Lemma}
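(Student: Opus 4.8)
The strategy is to compute $\m_{\tt}T_{i-1}^{\pm1}$ by reducing everything to an Ariki--Koike computation inside a suitable subquotient, exactly as in \cite[5.13]{RS}. First I would observe that the formula $\m_{\tt}=\m_{\tt_n}$ is built up inductively from $\m_{\tt_{i-2}}$ by the two steps recorded before \eqref{des-b}: in case a) the passage $\tt_{i-2}\subset\tt_{i-1}\subset\tt_i$ involves two ``add a box'' steps, and in case b) one ``remove a box'' step (producing a factor $E_{i-2}T_{i-2,b_{s,k}}$) followed by one ``add a box'' step. Since $T_{i-1}$ commutes past all the generators $T_j,X_j,E_j$ appearing in $\m_{\tt_{i-2}}$ (these involve only indices $\le i-2$), it suffices to analyse the action of $T_{i-1}$ on the last one or two factors, and then re-expand the result in the basis $\{\m_{\uu}\}$ of $\Delta(f,\lambda)$ from Proposition~\ref{murphybasis}(a).

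For part a), both $\tt_{i-1}$ and $\tt_i$ are obtained by adding boxes, so the relevant portion of $\m_{\tt}$ lies in the image of the Murphy-type basis element for the Ariki--Koike algebra $\H_{r,i}$ under $\epsilon_i$, pushed into $\W_{r,n}$; here $\m_{\ss\tt}$ reduces mod $\W_{r,n}^{\rhd(f,\lambda)}$ to $\epsilon_{n-2f}(\m_{\ss\tt})$ by \cite[5.8d]{RX} as used in Proposition~\ref{key}. So I can invoke the known Ariki--Koike straightening rule (Dipper--James--Mathas \cite{DJM:cyc}, in the form used in \cite{RS}): $\m_{\tt}T_{i-1}=\m_{\tt s_{i-1}}+(\text{lower terms})$, where ``lower'' is with respect to $\overset{i-1}\succ$, because $T_{i-1}$ only changes coordinates $\tt_{i-1}$ while fixing $\tt_j$ for $j\ge i$, and the dominance estimate for Murphy basis elements of $\H$ translates into the $\overset{i-1}\succ$-estimate via the dictionary between standard tableaux and updown tableaux. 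The hypothesis $\tt s_{i-1}\lhd\tt$ guarantees that $\tt s_{i-1}$ is the leading term after applying $T_{i-1}$ (rather than $\tt$ itself).

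For part b), the new feature is the factor $E_{i-2}$ coming from the ``remove'' step at position $i-1$. Using $T_{i-1}E_{i-2}=T_{i-1}E_{i-2}$ and the tangle relations $E_{i-1}E_{i-2}=E_{i-1}T_{i-2}T_{i-1}$ together with the skein relations~\ref{waff}(6) and $E_{i-1}T_{i-1}^{-1}=\varrho^{-1}E_{i-1}$ (from~\ref{waff}(8a) and the Kauffman relation~\ref{waff}(2)), I would move $T_{i-1}^{-1}$ past the block $E_{i-2}T_{i-2,b_{s,k}}$ and the subsequent ``add'' block, picking up the correction terms prescribed by $\pmod{\W_{r,n}^{\rhd(f,\lambda)}}$ and by lower-order updown tableaux. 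The condition $(\tilde p,\ell)>(p,k)$ on the removed/added nodes is exactly what is needed for the two relevant factors in $\m_{\tt}$ to be ``in the wrong order'', so that applying $T_{i-1}^{-1}$ swaps them and yields $\m_{\tt s_{i-1}}$ as the leading term, again with $\overset{i-1}\succ$-lower corrections; once more $\tt s_{i-1}\lhd\tt$ pins down which of $\tt,\tt s_{i-1}$ is leading.

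The main obstacle is bookkeeping in case b): one must verify that after commuting $T_{i-1}^{-1}$ through the $E_{i-2}$-block and re-expanding, no term with updown tableau $\uu$ satisfying $\uu=\tt s_{i-1}$ or $\uu\overset{i-1}\succ\tt s_{i-1}$ is missed, and that all genuinely lower or ideal terms are correctly absorbed into $\sum_{\uu\overset{i-1}\succ\tt s_{i-1}}a_{\uu}\m_{\uu}$ resp.\ into $\W_{r,n}^{\rhd(f,\lambda)}$. This requires the precise combinatorics of the inductive definition of $\m_{\tt}$ in \eqref{des-b}, the commutation of $X$'s past $E$'s via~\ref{waff}(11) and the unwrapping relations~\ref{waff}(7), and careful use of Theorem~\ref{trian} to control the $X_k$-eigenvalue bookkeeping; but all of these are exactly parallel to \cite[5.13]{RS}, so I would carry out the same argument mutatis mutandis, citing \cite{RS} for the routine steps and only spelling out where the cyclotomic parameter $u_s$ enters.
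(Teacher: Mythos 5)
Your outline for part~a) is broadly consistent with the paper (reduce to $i=n$, use the pre-(\ref{des-b}) description to see that $\m_{\tt}T_{n-1}=\m_{\tt s_{n-1}}$ exactly modulo the correct ideals), so I will concentrate on part~b), which is where your proposal has a genuine gap.

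Your plan for part~b) is to work directly with the raw inductive formula for $\m_{\tt}$, in which the removal step from $\tt_{i-2}$ to $\tt_{i-1}$ produces a factor $E_{i-2}T_{i-2,b_{s,k}}$, and then move $T_{i-1}^{-1}$ leftwards past that block. But $T_{i-1}^{-1}$ does \emph{not} commute with $E_{i-2}$ (they share the strand $i-1$), and the relations you cite to handle this — $E_{i-1}E_{i-2}=E_{i-1}T_{i-2}T_{i-1}$ and $E_{i-1}T_{i-1}^{-1}=\varrho^{-1}E_{i-1}$ — both begin with an $E_{i-1}$ on the left, which simply is not present in the word you are rewriting. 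As written, the mechanism does not apply, and the proposal gives no replacement. The paper avoids this problem entirely: it does not manipulate the raw product $\m_{\tt}$, but first passes modulo $\W_{r,n}^{\rhd(f,\lambda)}$ to the regrouped form $\m_{\tt}\equiv E^{f,n}M_{\lambda}\,b_{\tt_n}$, in which all idempotents $E$ have been collected into the prefactor $E^{f,n}$ and $b_{\tt_n}$ from~(\ref{des-b}) is a word purely in the $T_j$'s and $X_j$'s (together with the cyclotomic factors $(X_{b_{j-1}}-u_j)$). In that form the interesting noncommutativity is between the $X$'s of the cyclotomic block and the $T$'s coming from the ``add'' step at position $i$; applying the skein relation~\ref{waff}(6) to push an $(X_{c_{j-1}}-u_j)$ through a $T$ creates correction terms carrying $E_{c_j}$ and $X_{c_j+1}$ at indices governed by the component sizes $c_j$, not at $i-2$ — i.e.\ the $E$'s that enter the analysis do not match the picture in your plan. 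The paper then shows, by a case induction on the component $\tilde p$ of the added node, that (i) the $X$-correction term lies in $\W_{r,n}^{\rhd(f,\lambda)}$ because $\nu=\tt_{n-2}\rhd\lambda$, (ii) the remaining term matches $\m_{\tt s_{n-1}}$ at the top, and (iii) the $E$-correction term, after straightening $E^{f,n}M_\lambda T_{\ell,n-2f}$ with the Murphy basis of $\H_{r,n-2f}$ and \cite[3.15]{M:ULect}, contributes only tableaux $\ss$ with $\ss_{n-1}\unrhd\tt_{n-1}\rhd(\tt s_{n-1})_{n-1}$, hence $\ss\overset{n-1}\succ\tt s_{n-1}$. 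This last dominance argument is the substance of the lemma; your proposal acknowledges it as ``the main obstacle'' but neither supplies the argument nor signals how the ideal term and the dominance bound are actually obtained, so as it stands the proof is incomplete precisely where new work is required.

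One further small point: you write that in case~a) the relevant portion of $\m_{\tt}$ lives in the image of a Murphy basis element of $\H_{r,i}$; strictly speaking $\m_{\tt_{i-2}}$ may itself contain idempotents from earlier removals, so the clean ``Ariki--Koike straightening'' picture only becomes literally true after passing to $E^{f,n}M_\lambda b_{\tt_n}$ as above, and using \cite[5.8]{RX} / \cite{DJM:cyc} there.
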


\begin{proof} First, we assume $i=n$.  One can prove (a) by verifying
 $\m_{\tt}T_{n-1}=\m_{\tt s_{n-1}}$ via (\ref{des-b}).
 We leave the details to the reader.

 In order to prove (b), write $\tt_{n-2}\setminus\tt_{n-1}=(p,k,\nu_{k}^{(p)})$
and $\tt_{n}\setminus\tt_{n-1}=(\tilde
p,\ell,\lambda_{\ell}^{(\tilde p)})$. Let $a=a_{\tilde
p-1}+\sum_{i=1}^{\ell} \lambda_i^{(\tilde p)}$,
$c=c_{p-1}+\sum_{i=1}^k \nu_i^{(p)}$. Since either $\tilde p>p$ or
$\tilde p=p$ and $\ell>k$, we have $a\geq c$.

First, we assume $p<r$, then
$$\m_{\tt}=E^{f,n}M_{\lambda}T_{a,n}T_{n-2,c_{r-1}} A
b_{\tt_{n-2}}+\B_{r,n}^{\rhd(f,\lambda)}$$ where
\begin{equation}\label{A} A=\prod_{j=r}^{p+2}{(X_{c_{j-1}}-u_j)T_{c_{j-1},c_{j-2}}}\times
(X_{c_p}-u_{p+1})T_{c_p,c}
 \sum_{j=c_{p,k-1}+1}^{c}q^{c-j}T_{c,j}
.\end{equation} We prove (b) by induction on $\tilde p$.

If $\tilde p=r$, then $a\geq c_{r-1}$. It is routine to verify
$\m_{\tt}T_{n-1}^{-1}=\m_{\tt s_{n-1}}$.

If $\tilde p=r-1$, then $c_{r-2}\leq a\leq c_{r-1}$. We have
\begin{equation} \label{fff} \begin{aligned}
 \m_{\tt}T_{n-1}^{-1}
 & \equiv E^{f,n}M_{\lambda}T_{n-1,c_{r-1}+1}T_{a,c_{r-1}}
 \{(X_{c_{r-1}+1}-u_r)T_{c_{r-1}+1,c_{r-2}}\\ &
+\delta X_{c_{r-1}+1}E_{c_{r-1}}T_{c_{r-1},c_{r-2}} - \delta
X_{c_{r-1}+1}T_{c_{r-1},c_{r-2}}\} A\\
& \times T_{c_{r-1}+1,n-1}b_{\tt_{n-2}}
\pmod{\B_{r,n}^{\rhd(f,\lambda)}}\\
\end{aligned}\end{equation}

Since $T_{n-1,c_{r-1}+1}X_{c_{r-1}+1}T_{c_{r-1}+1,n-1}=X_{n-1},$ the
third term on the right hand of (\ref{fff})  is equal to $$h:=\delta
\sum_{j=a_{\tilde p,\ell-1}+1}^a q^{a-j}
T_{j,a}T_{a,c}E^{f,n}M_{\nu}b_{\tt_{n-2}}X_{n-1}$$ with
$\nu=\tt_{n-2}$. Since we are assuming that $\nu\rhd\lambda$, $h\in
\W_{r,n}^{\rhd(f,\lambda)}$.

The first term on the right hand side of the above equality is equal
to $\m_{\tt s_{n-1}}$. One can verify it  by  arguments in the proof
of \cite[5.13]{RS}. We leave the details to the reader.

 Finally we consider the second
term $h_1$ on the right hand side of (\ref{fff}). Since
$T_{a,c_{r-1}}X_{c_{r-1}}^{-1}=X_{a}^{-1}T_{c_{r-1},a}^{-1}$ and
$E^{f,n}T_{n-1,c_{r-1}+1}E_{c_{r-1}}T_{c_{r-1}+1,n-1}=E^{f,n}T_{c_{r-1},n}T_{n-2,c_{r-1}},$
$\delta^{-1}  h_1$ is equal to $$\begin{aligned}
&E^{f,n}M_{\lambda}X_{a}^{-1}T_{c_{r-1},a}^{-1}T_{c_{r-1},n}T_{n-2,c_{r-2}}
A b_{\tt_{n-2}}+\B_{r,n}^{\rhd(f,\lambda)}\\
 =&c_{\tt^{\lambda}}(a)^{-1}E^{f,n}M_{\lambda}\prod_{j=a}^{c_{r-1}-1}
 (T_{j}-\delta)T_{c_{r-1},n}T_{n-2,c_{r-1}}T_{c_{r-1},c_{r-2}}\\
& \times  A  b_{\tt_{n-2}}+\B_{r,n}^{\rhd(f,\lambda)}.\\
\end{aligned}$$
Note that  $\prod_{j=a}^{c_{r-1}-1} (T_{j}-\delta) \times
T_{c_{r-1},n}$ can be written as an $R$-linear combination of
$T_{\ell,n}h,$ with $a\leq\ell\leq c_{r-1}$ and $h\in
\W_{r,\ell-1}.$ So $\delta^{-1}c_{\tt^\lambda}(a)h_1$ can be written
as an $R$-linear combination of the following elements
$$
E^{f,n}M_{\lambda}T_{\ell,n}T_{n-2,c_{r-1}}T_{c_{r-1},c_{r-2}} A
b_{\tt_{n-2}}h+\B_{r,n}^{\rhd(f,\lambda)}.$$

Note that $M_{\lambda}T_w\equiv q^{l(w)}M_{\lambda} \pmod {\langle
E_1
 \rangle}$ if $w\in \mathfrak S_\lambda$. So, $M_{\lambda}T_{\ell, n-2f}\equiv q^k M_\lambda
T_{b, n-2f}\pmod {\langle E_1
 \rangle}$ for some integers $k, b$ such that $\v=\bft^\lambda
s_{b, n-2f}$ is a row standard tableau. Furthermore, since $b\ge
\ell\ge a$, $\v_{n-2f-1}\unrhd \tt_{n-1}$. If $\v$ is not standard,
we use \cite[3.15]{M:ULect} and \cite[5.8]{RX} to get
$$E^{f, n}M_{\lambda}T_{\ell,n-2f}\equiv \sum_{\bfs\in \Std(\lambda),\bfs\unrhd\v}
a_\bfs E^{f, n} M_{\lambda}T_{d(\bfs)}\pmod \Wlam
$$ for some scalars $a_\bfs\in R$. We write $d(\bfs)=s_{\ell', n-2f} d(\bfs')$
where $\bfs'$ is obtained from $\bfs$ by removing the entry $n-2f$.
Since $\bfs\unrhd \v$, $\bfs'\in \Std (\alpha)$ for $\alpha\in
\Lambda_{r}^+(n-2f-1)$ with $\alpha\unrhd \v_{n-2f-1}\unrhd
\tt_{n-1}\rhd (\tt s_{n-1})_{n-1}$. Therefore, $h_1$ can be written
as an $R$-linear combination of the elements
$$
E^{f,n}M_{\lambda}T_{\ell',n}T_{d(\s')}T_{n-2,c_{r-1}}T_{c_{r-1},c_{r-2}}
A b_{\tt_{n-2}}h \pmod \Wlam
$$ Note that
$E^{f,n}M_{\lambda}T_{\ell',n}=y_{\alpha}^{\lambda}$, and
$\alpha=\mu_{\lambda}(i)$ for some $i, 1\le i\le s$. So, the above
element can be written as  an $R$-linear combination of the elements
in $\{\m_{\ss}|\ss \in \UPD_n(\lambda), \ss_{n-1}\unrhd
\tt_{n-1}\rhd(\tt s_{n-1})_{n-1}\}$. In this case,
$\ss\overset{n-1}\succ\tt s_{n-1}$.

However, when $\tilde p<r-1,$ the first term is not equal to
$\m_{\tt s_{n-1}}.$ We will use it instead of $\m_{\tt}T_{n-1}^{-1}$
to get a similar equality for $i=c_{r-2}.$ This will enable us to
get three terms. If $\tilde p=r-2,$ we will be done since the first
term must be $\m_{\tt s_{n-1}}.$ The second and the third term can
be written as an $R$-linear combination of $\m_{\uu}$ with
$\uu\overset{n-1}\succ \tt s_{n-1}.$ In general, we have to repeat
the above procedure to get the required formula. This completes the
proof of our result under the assumption $p<r$.

Let $p=r$. Note that  $a\geq c$. It is routine to check that
$$\m_{\tt}T_{n-1}^{-1} \equiv \m_{\tt s_{n-1}} \pmod \Wlam.$$

This completes the proof of the result for $i=n$. In general, we use
Theorem~\ref{cellf} and the definition of $\succ$  to reduce the
result to the case for $i=n$.
\end{proof}

\section{Recursive formulae for Gram determinants}

In  this section, we assume that $\W_{r, n}$ is defined over a field
 $F$ such that the following assumptions hold.

\begin{Assumption} \label{separate} Assume that $\bu=(u_1, u_2, \cdots, u_r)\in F^r$ is generic
in the sense that $|d|\ge 2n$
     whenever there exists $d\in\Z$ such that either
    $u_i u_j^{\pm 1}=q^{2d}1_F$ and $i\ne j$,
    or $u_i=\pm q^{d}\cdot1_F$. We will also assume
    $o(q^2)>n$.\end{Assumption}
Suppose that  $\ss, \tt\in \UPD_n(\lambda)$.  Under the
Assumption~\ref{separate}, Rui and Xu  have proved that $\ss=\tt$ if
and only if $c_\ss(k)=c_\tt(k)$, $1\le k\le n$~\cite[4.5]{RX}. So,
Assumption~\ref{separate} is the \textsf{ separate condition } in
the sense of \cite[2.8]{M:semi}. This enables us to use standard
arguments in \cite{M:semi} to construct an orthogonal basis for
$\Delta(f, \lambda)$ as follows.

For each positive integer $k\le n$, let $$R(k)=\{ c_\tt(k)\mid
\tt\in\UPD_n(\lambda)\}.$$
 For $\ss, \tt\in \UPD_n(\lambda)$, let
 \begin{enumerate} \item
 $F_\tt
=\prod_{k=1}^n F_{\tt, k}$, \item  $f_{\ss\tt}=F_\ss \m_{\ss\tt}
F_\tt$, \item $f_\ss= \m_\ss  F_\ss \pmod {\W_{r, n}^{\rhd (f,
\lambda)}}$,
\end{enumerate}
where
\begin{equation}\label{ftk} F_{\tt, k} =\prod_{\substack{r\in \mathscr R(k)\\
c_\tt(k)\neq r}} \frac {X_k-r} {c_\tt(k)-r}.\end{equation}

 The following results hold for a general class of cellular
 algebras which have JM-bases such that the separate condition holds~\cite[\S3]{M:semi}.

\begin{Lemma}\label{fxproduct}
 Suppose that  $\tt\in \UPD_n(\lambda)$ with  $(f, \lambda)\in \Lambda^+_{r, n}$.
\begin{enumerate}\item  $f_\tt=\m_\tt + \sum_{\ss\in \UPD_n(\lambda)}
a_\ss \m_\ss$, and $\ss\succ  \tt$ if $a_\ss\neq 0$.
 \item $\m_\tt=f_\tt + \sum_{\ss\in \UPD_n(\lambda)} b_\ss
f_\ss$, and $\ss\succ \tt$ if $b_\ss\neq 0$.
\item $f_\tt X_k=c_\tt(k) f_\tt$, for any integer $k$, $1\le k\le n$.
\item $f_\tt  F_\ss =\delta_{\ss\tt} f_\tt$ for all $\ss\in
\UPD_n(\mu)$ with $(\frac {n-|\mu|}{2}, \mu)\in \Lambda^+_{r, n}$.
\item  $\set {f_\tt \mid \tt\in \UPD_n(\lambda)}$ is a basis of
$\Delta(f, \lambda)$. \item  The Gram determinants associated to
$\Delta(f, \lambda)$ defined by $\set {f_\tt \mid \tt\in
\UPD_n(\lambda)}$ and the JM-basis in Proposition~\ref{murphybasis}
are the same.
\item $\{f_{\ss\tt}\mid \ss, \tt\in \UPD_n(\lambda), (f, \lambda)\in \Lambda^+_{r, n}\}$
is an $F$-basis of  $\W_{r, n}$. Further, we have $f_{\ss\tt}
f_{\uu\vv}=\delta_{\tt\uu} \langle f_\tt, f_\tt \rangle f_{\ss\vv}$
where $\ss, \tt, \uu, \vv$ are updown tableaux and $\langle  \ ,  \
\rangle$ is the invariant bilinear form defined on the cell module
$\Delta(f, \lambda)$.
 \end{enumerate}
\end{Lemma}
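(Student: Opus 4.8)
The plan is to obtain the whole lemma as an instance of Mathas's general theory of cellular algebras carrying a JM--basis for which the separate condition holds \cite[\S3]{M:semi}, all of whose hypotheses have by now been verified for $\W_{r,n}$: the set $\mathscr M=\{\m_{\ss\tt}\}$ is a cellular basis by Theorem~\ref{murphynaza}(a); the elements $X_1,\dots,X_n$ commute (Definition~\ref{waff}(5)), are fixed by $\ast$, and act triangularly on $\mathscr M$ with content functions $c_\tt(k)$ by Theorem~\ref{murphynaza}(b), so they form a family of JM--elements; and, by Assumption~\ref{separate} together with \cite[4.5]{RX}, the separate condition holds, i.e.\ $\ss=\tt$ whenever $c_\ss(k)=c_\tt(k)$ for all $k$. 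The only genuinely algebra--specific input is the triangular action of the $X_k$ on $\Delta(f,\lambda)$, which is precisely Theorem~\ref{trian} and is already in hand; everything below is formal manipulation with commuting operators, so I would present it tersely and refer to \cite[\S3]{M:semi} for the routine details.

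First I would prove (a). Since the $X_k$, hence the polynomials $F_{\tt,k}$ in them, all commute, $f_\tt=\m_\tt\prod_{k=1}^n F_{\tt,k}$, and it suffices to check that right multiplication by a single $F_{\tt,k}$ sends $\m_\uu$, for any $\uu\in\UPD_n(\lambda)$, to $F_{\tt,k}(c_\uu(k))\,\m_\uu$ plus an $R$--combination of $\m_\vv$ with $\vv\succ\uu$; this follows by iterating the formula $\m_\uu X_k=c_\uu(k)\m_\uu+\sum_{\vv\overset{k-1}\succ\uu}a_\vv\m_\vv$ of Theorem~\ref{trian} and using that $\succ$ is a linear order, so the error terms only increase in $\succ$. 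Applying this with $\uu=\tt$ (where $F_{\tt,k}(c_\tt(k))=1$) and then pushing the output through the remaining factors $F_{\tt,j}$ gives $f_\tt=\m_\tt+\sum_{\ss\succ\tt}a_\ss\m_\ss$. Inverting this unitriangular transition matrix over the linear order $\succ$ yields (b), and (e) is then immediate from Proposition~\ref{murphybasis}(a).

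For (c) and (d) I would argue the two assertions simultaneously by downward induction on $\tt$ with respect to $\succ$. Writing $F_\tt=\prod_k F_{\tt,k}$ and using Theorem~\ref{trian}, $f_\tt X_k=c_\tt(k)f_\tt+\sum_{\uu\overset{k-1}\succ\tt}a_\uu\,\m_\uu F_\tt$; expanding each $\m_\uu$ with $\uu\succ\tt$ in the $f$--basis via (b) and applying the inductive case of (c) to all the $f_\vv$ with $\vv\succeq\uu\succ\tt$, each such $f_\vv$ is a simultaneous eigenvector of $X_1,\dots,X_n$ with eigenvalues $c_\vv(j)$, whence $f_\vv F_\tt=\prod_j F_{\tt,j}(c_\vv(j))\,f_\vv$; since $\vv\ne\tt$ the separate condition supplies some $j$ with $c_\vv(j)\ne c_\tt(j)$, so $F_{\tt,j}(c_\vv(j))=0$ and $f_\vv F_\tt=0$. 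Thus $\m_\uu F_\tt=0$ for every $\uu\succ\tt$, so $f_\tt X_k=c_\tt(k)f_\tt$, which is (c); the base case $\tt=\tt^\lambda$, the $\succ$--maximal updown tableau (so that $f_{\tt^\lambda}=\m_{\tt^\lambda}$ has no $\succ$--larger partners), is Theorem~\ref{trian} directly. Part (d) follows from (c) by the same content argument: $f_\tt F_\ss=\prod_j F_{\ss,j}(c_\tt(j))\,f_\tt=\delta_{\ss\tt}f_\tt$.

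Finally, (f) holds because the Gram matrix of $\langle\,,\,\rangle$ on $\Delta(f,\lambda)$ in the basis $\{f_\tt\}$ equals $P^{\mathrm t}GP$, where $G$ is the Gram matrix in the JM--basis $\{\m_\tt\}$ and $P$ is the unitriangular transition matrix of (a), so $\det P=1$. For (g): invariance of the form together with (c) gives $c_\tt(k)\langle f_\tt,f_\uu\rangle=\langle f_\tt X_k,f_\uu\rangle=\langle f_\tt,f_\uu X_k\rangle=c_\uu(k)\langle f_\tt,f_\uu\rangle$, so $\langle f_\tt,f_\uu\rangle=0$ unless $c_\tt(k)=c_\uu(k)$ for all $k$, i.e.\ unless $\tt=\uu$; substituting $f_{\ss\tt}=F_\ss\m_{\ss\tt}F_\tt$ into the cellular multiplication rule $\m_{\ss\tt}\m_{\uu\vv}\equiv\langle\m_\tt,\m_\uu\rangle\,\m_{\ss\vv}\pmod{\W_{r,n}^{\rhd(f,\lambda)}}$ and using (c), (d) to absorb the idempotent--like $F$'s (the $\rhd$--terms dropping out because $f_{\ss\tt}$ already lies in the $F$--span attached to $(f,\lambda)$) yields $f_{\ss\tt}f_{\uu\vv}=\delta_{\tt\uu}\langle f_\tt,f_\tt\rangle f_{\ss\vv}$, and linear independence of $\{f_{\ss\tt}\}$ follows from that of $\mathscr M$ via the unitriangular change of basis. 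I expect the main obstacle to be purely organizational: getting the $\succ$--bookkeeping in (a) right and correctly interleaving the four statements in the induction for (c)--(d); since this is exactly the content of \cite[\S3]{M:semi}, no computation specific to $\W_{r,n}$ is needed beyond Theorem~\ref{trian}.
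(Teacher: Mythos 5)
Your proof is correct and takes essentially the same route as the paper. The paper's justification of Lemma~\ref{fxproduct} is simply the remark preceding it that these statements hold for any cellular algebra equipped with a JM-basis satisfying the separate condition, with a citation to \cite[\S3]{M:semi}; what you have done is verify the hypotheses of that general theory for $\W_{r,n}$ (via Theorem~\ref{murphynaza}, Theorem~\ref{trian} and Assumption~\ref{separate} together with \cite[4.5]{RX}) and then reproduce the standard derivations the citation is meant to supply.
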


By Lemma~\ref{fxproduct}(f), we can compute the Gram determinant
associated to $\Delta(f, \lambda)$ by computing each  $\langle
f_\tt, f_\tt \rangle$, for $\tt\in \UPD_n(\lambda)$.

Given two $\ss, \tt\in \UPD_n(\lambda)$ and a positive integer $k\le
n-1$. We write $\ss\simk\tt$ if $\ss_j=\tt_j$ for $1\le j\le n$ and
$j\neq k$.

\begin{Defn} \label{ett}For any $\ss, \tt\in \UPD_n(\lambda)$ and a positive integer $k\le n-1$,  define
$T_{\tt\ss}(k), E_{\tt\ss}(k)\in F$ by declaring that
$$
f_\tt T_k=\sum_{\ss\in \UPD_n(\lambda)} T_{\tt\ss}(k) f_\ss, \quad
f_\tt E_k=\sum_{\ss\in \UPD_n(\lambda)} E_{\tt\ss}(k) f_\ss.$$
\end{Defn}

 Standard arguments prove the
following result (cf. \cite[6.8--6.9]{RS}).
\begin{Lemma} \label{equi1}Suppose $\tt\in \UPD_n(\lambda)$ and $(f, \lambda)\in \Lambda^+_{r, n}$.
 \begin{enumerate} \item  $\ss\simk \tt$  if either $T_{\tt\ss}(k)\neq 0$ or
$E_{\tt\ss}(k)\neq 0$.\item $f_\tt E_k=0$ if $\tt_{k-1} \neq
\tt_{k+1}$ for any $1\le k\le n-1$.
\item Assume  $\tt_{k-1} \neq \tt_{k+1}$.
\begin{enumerate} \item If $\tt_k\ominus \tt_{k-1}$ and $\tt_k\ominus
\tt_{k+1}$ are in the same row of a component, then $f_\tt
T_k=qf_\tt$.
\item If $\tt_k\ominus \tt_{k-1}$ and $\tt_k\ominus \tt_{k+1}$ are in
the same column of a component, then $f_\tt T_k =-q^{-1}f_\tt $.
\end{enumerate}
\item Assume $\tt_{k-1}=\tt_{k+1}$.
 \begin{enumerate}
\item $f_\tt E_k=\sum_{\ss\simk\tt} E_{\tt\ss} (k) f_\ss$.  Furthermore,
$\langle f_\ss,  f_\ss\rangle E_{\tt\ss} (k)=\langle f_\tt,
f_\tt\rangle E_{\ss\tt} (k)$. \item $f_\tt T_k=\sum_{ \ss\simk\tt}
T_{\tt\ss} (k) f_\ss$. Furthermore, $T_{\tt\ss}(k)=\delta
\frac{E_{\tt\ss}(k)-\delta_{\tt\ss}}{c_\tt(k)c_\ss(k)-1}$.
\end{enumerate}
\end{enumerate}
\end{Lemma}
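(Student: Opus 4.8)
The plan is to run the seminormal-basis machinery of \cite[\S6]{RS} (in the form of \cite[\S3]{M:semi}), whose inputs are all available here: $\{f_\tt\mid \tt\in\UPD_n(\lambda)\}$ is a simultaneous eigenbasis of the Jucys--Murphy elements with $f_\tt X_j=c_\tt(j)f_\tt$ (Lemma~\ref{fxproduct}(c)), it is orthogonal for the cell form $\langle\ ,\ \rangle$ on $\Delta(f,\lambda)$ (Lemma~\ref{fxproduct}(g)), and Assumption~\ref{separate} is the separate condition, so by \cite[4.5]{RX} an element of $\UPD_n(\lambda)$ is determined by its sequence of contents. For part (a) I would first use that, by Definition~\ref{waff}(b),(c), $X_j$ commutes with $T_k$ and with $E_k$ whenever $j\neq k,k+1$: acting with such an $X_j$ on $f_\tt T_k=\sum_\ss T_{\tt\ss}(k)f_\ss$ (and on $f_\tt E_k=\sum_\ss E_{\tt\ss}(k)f_\ss$) forces $c_\ss(j)=c_\tt(j)$ for all $j\neq k,k+1$ whenever the relevant coefficient is nonzero. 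Then, under Assumption~\ref{separate}, $\ss_j$ is the \emph{unique} multipartition obtained from $\ss_{j-1}$ by adding or removing a box of residue $c_\ss(j)$ --- uniqueness because the addable and removable nodes of a fixed component have pairwise distinct contents, while the genericity of $\bu$ separates contents coming from different components --- and dually $\ss_{j-1}$ is determined by $\ss_j$ and $c_\ss(j)$. Running the first recursion upward from $\ss_0=\emptyset$ along $c_\ss(1),\dots,c_\ss(k-1)$ and the second downward from $\ss_n=\lambda$ along $c_\ss(n),\dots,c_\ss(k+2)$ gives $\ss_j=\tt_j$ for every $j\neq k$, i.e. $\ss\simk\tt$.

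For part (b) the key point is that $X_kX_{k+1}$ commutes with both $T_k$ and $E_k$: for $E_k$ this is immediate from the anti-symmetry relation Definition~\ref{waff}(10), $E_kX_kX_{k+1}=E_k=X_kX_{k+1}E_k$, and for $T_k$ it follows by a short manipulation of the skein relations Definition~\ref{waff}(6a),(6b). Writing $f_\tt E_k=\sum_{\ss\simk\tt}E_{\tt\ss}(k)f_\ss$ by part (a) and comparing the action of $X_kX_{k+1}$ with the identity $f_\tt E_kX_kX_{k+1}=f_\tt E_k$ forces $c_\ss(k)c_\ss(k+1)=1$ whenever $E_{\tt\ss}(k)\neq 0$. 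A content computation --- using the genericity of $\bu$ and the classical fact that the addable and removable nodes of a partition have pairwise distinct contents --- shows that $c_\ss(k)c_\ss(k+1)=1$ can occur only if $\ss_{k-1}=\ss_{k+1}$; but $\ss\simk\tt$ makes $\ss_{k\pm1}=\tt_{k\pm1}$, contradicting the hypothesis $\tt_{k-1}\neq\tt_{k+1}$, so $f_\tt E_k=0$.

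For parts (c) and (d) I would derive everything from a single identity: applying the skein relation Definition~\ref{waff}(6a), $T_kX_k=X_{k+1}T_k+\delta X_{k+1}(E_k-1)$, on the right of $f_\tt$ and using $f_\tt X_{k+1}=c_\tt(k+1)f_\tt$ and part (a),
\[
\sum_{\ss}T_{\tt\ss}(k)c_\ss(k)f_\ss = c_\tt(k+1)\sum_\ss T_{\tt\ss}(k)f_\ss+\delta c_\tt(k+1)\sum_\ss E_{\tt\ss}(k)f_\ss-\delta c_\tt(k+1)f_\tt .
\]
If $\tt_{k-1}\neq\tt_{k+1}$ I substitute $f_\tt E_k=0$ from (b); comparing coefficients then gives $T_{\tt\ss}(k)=0$ unless $c_\ss(k)=c_\tt(k+1)$, and $T_{\tt\tt}(k)=\delta c_\tt(k+1)/(c_\tt(k+1)-c_\tt(k))$. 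When $\tt s_k$ is undefined, i.e. the boxes $\tt_k\ominus\tt_{k-1}$ and $\tt_k\ominus\tt_{k+1}$ lie in a common row or a common column, an inspection of Young diagrams shows one is in the situation $\tt_{k-1}\subset\tt_k\subset\tt_{k+1}$ or $\tt_{k-1}\supset\tt_k\supset\tt_{k+1}$ and that there is no $\ss\neq\tt$ with $\ss\simk\tt$; hence $f_\tt T_k=T_{\tt\tt}(k)f_\tt$, and since $c_\tt(k+1)=q^{2}c_\tt(k)$ (common row) or $c_\tt(k+1)=q^{-2}c_\tt(k)$ (common column), the scalar $T_{\tt\tt}(k)$ simplifies to $q$ or $-q^{-1}$, which is (c). If instead $\tt_{k-1}=\tt_{k+1}$, then $c_\tt(k+1)=c_\tt(k)^{-1}$, and comparing the coefficient of each $f_\ss$ in the identity above gives $T_{\tt\ss}(k)(c_\tt(k)c_\ss(k)-1)=\delta(E_{\tt\ss}(k)-\delta_{\tt\ss})$, with $c_\tt(k)c_\ss(k)-1\neq 0$ by the genericity of $\bu$ --- this is the formula in (d)(ii). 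The first assertions of (d)(i) and (d)(ii) are part (a), and the symmetry $\langle f_\ss,f_\ss\rangle E_{\tt\ss}(k)=\langle f_\tt,f_\tt\rangle E_{\ss\tt}(k)$ follows by expanding $\langle f_\tt E_k,f_\ss\rangle=\langle f_\tt,f_\ss E_k\rangle$ using the $\ast$-invariance of the cell form, $E_k^\ast=E_k$, and orthogonality of $\{f_\tt\}$.

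I expect the only genuine work to be inside the phrases ``a content computation'' and ``an inspection of Young diagrams'' in steps (b) and (c): one must go through the (few) possible local configurations of $\ss$ with $\ss\simk\tt$ and verify the content relations $c_\ss(k)c_\ss(k+1)\neq1$, $c_\tt(k)c_\ss(k)\neq1$ and $c_\tt(k+1)=q^{\pm2}c_\tt(k)$. This bookkeeping is ``standard'' in the sense of \cite[\S6]{RS} and \cite{AMR}, and it is exactly the place where the genericity of $\bu$ and the hypothesis $o(q^2)>n$ of Assumption~\ref{separate} are used.
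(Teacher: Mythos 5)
Your proposal is correct and runs the same seminormal/JM-element machinery that the paper invokes: the paper dispatches this lemma with ``standard arguments'' and a pointer to \cite[6.8--6.9]{RS}, and what you have written is precisely those standard arguments unpacked. Part (a) via $X_j$-eigenvector comparison for $j\neq k,k+1$ plus the two-sided recursion under the separate condition, part (b) via centrality of $X_kX_{k+1}$ for $T_k,E_k$ and the anti-symmetry relation, and parts (c)--(d) via the single skein identity $T_kX_k=X_{k+1}T_k+\delta X_{k+1}(E_k-1)$ with the observation $c_\tt(k+1)=c_\tt(k)^{-1}$ when $\tt_{k-1}=\tt_{k+1}$, all match the intended reasoning; the ``content computation'' and ``inspection of Young diagrams'' you flag are indeed the only places where the genericity hypotheses and the distinctness of diagonals of addable/removable nodes are used, and these routine checks go through.
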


\begin{Lemma}\label{fxproduct1}
Suppose that $\tt\in\UPD_n(\lambda)$ with $\tt_{k-1}\neq\tt_{k+1}$
and $\tt s_k\in \UPD_n(\lambda)$. Then
$f_{\tt}T_k=T_{\tt,\tt}(k)f_{\tt}+T_{\tt,\tt s_k}(k)f_{\tt s_k}$,
with $T_{\tt,\tt}(k)=\frac{\delta
c_{\tt}(k+1)}{c_{\tt}(k+1)-c_{\tt}(k)}$. Suppose one of the
following conditions holds: \begin{itemize}
\item[(1)] $\tt_{k-1}\subset\tt_{k}\subset\tt_{k+1},$
\item[(2)] $\tt_{k-1}\supset\tt_{k}\subset\tt_{k+1}$ such that
$(\tilde p,l)>(p,i)$ where
$\tt_{k-1}\setminus\tt_{k}=(p,i,\nu_i^{(p)})$,
$\tt_{k+1}\setminus\tt_{k}=(\tilde p,\ell,\mu_{\ell}^{(\tilde p)})$,
$\tt_{k-1}=\nu$ and $\tt_{k+1}=\mu$.
\end{itemize}
Then
$$T_{\tt,\tt
s_k}(k)=\begin{cases}
1-\frac{c_{\tt}(k)}{c_{\tt}(k+1)}T_{\tt,\tt}^2(k), &\text{if
        $\tt s_k\rhd\tt$,}\\
         1, &\text{if $\tt
        s_k\lhd\tt$.}\\
        \end{cases}$$
\end{Lemma}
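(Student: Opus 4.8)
The plan is to nail down the two coefficients appearing in $f_\tt T_k$ by combining the Jucys--Murphy eigenvalue calculus of Lemma~\ref{fxproduct} and Lemma~\ref{equi1}, the defining relations of $\W_{r,n}$, and Lemma~\ref{mts}. First, since $\tt_{k-1}\neq\tt_{k+1}$, Lemma~\ref{equi1}(a)--(b) give $f_\tt E_k=0$ and $f_\tt T_k=\sum_{\ss\simk\tt}T_{\tt\ss}(k)f_\ss$, and a short case check on $|\tt_{k+1}|-|\tt_{k-1}|\in\{-2,0,2\}$ (using that $\tt s_k$ is defined, so the two nodes concerned lie in distinct rows and columns) shows the only $\ss$ with $\ss\simk\tt$ are $\tt$ and $\tt s_k$; hence $f_\tt T_k=T_{\tt,\tt}(k)f_\tt+T_{\tt,\tt s_k}(k)f_{\tt s_k}$, and likewise $f_{\tt s_k}T_k=T_{\tt s_k,\tt s_k}(k)f_{\tt s_k}+T_{\tt s_k,\tt}(k)f_\tt$. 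By \eqref{residue} and the definition of $\tt s_k$, the node added or removed at step $k$ of $\tt s_k$ is the one added or removed at step $k+1$ of $\tt$, so $c_{\tt s_k}(k)=c_\tt(k+1)$ and $c_{\tt s_k}(k+1)=c_\tt(k)$. Applying the skein relation $T_kX_k-X_{k+1}T_k=\delta X_{k+1}(E_k-1)$ to $f_\tt$ on the right and using $f_\tt X_j=c_\tt(j)f_\tt$ for $j=k,k+1$ (Lemma~\ref{fxproduct}(c)) together with $f_\tt E_k=0$, the left side collapses to $T_{\tt,\tt}(k)(c_\tt(k)-c_\tt(k+1))f_\tt$ (the $f_{\tt s_k}$--term drops since $c_{\tt s_k}(k)=c_\tt(k+1)$) while the right side is $-\delta c_\tt(k+1)f_\tt$; comparing coefficients yields $T_{\tt,\tt}(k)=\delta c_\tt(k+1)/(c_\tt(k+1)-c_\tt(k))$, and the identical computation for $\tt s_k$ gives $T_{\tt s_k,\tt s_k}(k)=\delta-T_{\tt,\tt}(k)$.

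Next I would get the product of the off--diagonal coefficients. On $V:=Ff_\tt\oplus Ff_{\tt s_k}$ the operator $E_k$ vanishes, so Kauffman's relation makes $T_k$ restrict to an operator $A$ on $V$ with $A^2=\delta A+I$; since $\operatorname{tr}A=T_{\tt,\tt}(k)+T_{\tt s_k,\tt s_k}(k)=\delta$, this forces $\det A=-1$, i.e.
$$T_{\tt,\tt s_k}(k)\,T_{\tt s_k,\tt}(k)=1+T_{\tt,\tt}(k)\,T_{\tt s_k,\tt s_k}(k)=1-\tfrac{c_\tt(k)}{c_\tt(k+1)}\,T_{\tt,\tt}(k)^2,$$
the last equality using $T_{\tt s_k,\tt s_k}(k)=\delta-T_{\tt,\tt}(k)$ and the formula just obtained. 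This is precisely the claimed value in the case $\tt s_k\rhd\tt$, so it only remains to show that the off--diagonal coefficient pointing \emph{downward} with respect to $\unrhd$ equals $1$.

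For this I would use Lemma~\ref{mts} together with the idempotents $F_\tt$. Note first that $T_k^{-1}=T_k-\delta+\delta E_k$ in $\W_{r,n}$, so $f_\tt E_k=0$ also gives $f_\tt T_k^{-1}=-T_{\tt s_k,\tt s_k}(k)f_\tt+T_{\tt,\tt s_k}(k)f_{\tt s_k}$. Suppose first $\tt s_k\lhd\tt$; then hypothesis (1) holds and Lemma~\ref{mts}(a) gives $\m_\tt T_k=\m_{\tt s_k}+\sum_{\uu\overset{k}\succ\tt s_k}a_\uu\m_\uu$. Right--multiply $f_\tt T_k$ by $F_{\tt s_k}$: on one side it is $T_{\tt,\tt s_k}(k)f_{\tt s_k}$ by Lemma~\ref{fxproduct}(d); on the other side, writing $f_\tt=\m_\tt+\sum_{\ss\succ\tt}a_\ss\m_\ss$ and using $\tt\succ\tt s_k$ together with the $\succ$--unitriangularity of the changes of basis in Lemma~\ref{fxproduct}(a)--(b) and part (d), every term $\m_\ss T_k\,F_{\tt s_k}$ with $\ss\succ\tt$ vanishes while $\m_\tt T_k\,F_{\tt s_k}=\m_{\tt s_k}F_{\tt s_k}=f_{\tt s_k}$; hence $T_{\tt,\tt s_k}(k)=1$. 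If instead $\tt s_k\rhd\tt$ under (1), the same argument applied to $\tt s_k$ (which still has the shape of (1) and satisfies $(\tt s_k)s_k\lhd\tt s_k$) gives $T_{\tt s_k,\tt}(k)=1$; and under (2), where $\tt s_k\rhd\tt$ holds automatically, one instead computes $f_{\tt s_k}T_k\,F_\tt$, using Lemma~\ref{mts}(b) (via $\m_\tt T_k^{-1}=\m_{\tt s_k}+\sum_{\uu\overset{k}\succ\tt s_k}a_\uu\m_\uu$, i.e. $\m_{\tt s_k}T_k=\m_\tt-\sum_{\uu\overset{k}\succ\tt s_k}a_\uu\m_\uu T_k$) and the same vanishing of the $\succ$--tail, to obtain $T_{\tt s_k,\tt}(k)=1$. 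In each case the other off--diagonal coefficient is then forced by the displayed product formula, and this gives exactly the stated two--valued answer.

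The step I expect to be the main obstacle is the last one: matching the combinatorial hypotheses (1) and (2) with the available parts of Lemma~\ref{mts} and with the direction of $\unrhd$ between $\tt$ and $\tt s_k$, and checking, in the $F_\ast$--multiplied expressions, that every contribution coming from the $\succ$--tail of the $\m$--expansions really does vanish. One should also make sure to invoke the separate condition (Assumption~\ref{separate}) from the outset, since it is what makes the $f_\tt$ well defined and the $\m$--$f$ transition unitriangular in the first place.
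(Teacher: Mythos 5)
Your overall strategy matches the paper's: derive $T_{\tt,\tt}(k)$ from the skein relation $T_kX_k-X_{k+1}T_k=\delta X_{k+1}(E_k-1)$ after noting $f_\tt E_k=0$; then show that one off-diagonal entry is $1$ by projecting a Lemma~\ref{mts} expansion of $\m_\tt T_k$ with the idempotent $F_{\tt s_k}$ and killing the $\succ$--tail; then propagate to the remaining entry. Your computation of $T_{\tt,\tt}(k)$ is correct and identical to the paper's. Your argument that the $\succ$--tail of the $\m$--expansion really vanishes after $F_{\tt s_k}$ is also correct: any contribution of $f_{\tt s_k}$ to $f_\vv T_k$ or $f_\vv E_k$ would require $\vv\simk\tt s_k$, hence $\vv\in\{\tt,\tt s_k\}$, impossible when $\vv\succ\tt\succ\tt s_k$. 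Your trace/determinant device for the product
$T_{\tt,\tt s_k}(k)\,T_{\tt s_k,\tt}(k)=1+T_{\tt,\tt}(k)T_{\tt s_k,\tt s_k}(k)$
is a clean alternative to the paper's second multiplication by $T_k$; the trace is indeed $\delta$ since $T_{\tt s_k,\tt s_k}(k)=\delta-T_{\tt,\tt}(k)$, so $\det=-1$, and the formula does not need conditions (1) or (2).

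There is, however, a genuine error in your handling of condition (2). You assert that ``under (2), $\tt s_k\rhd\tt$ holds automatically.'' The opposite is true. Under $\tt_{k-1}\supset\tt_k\subset\tt_{k+1}$, write $\tt_{k-1}\setminus\tt_k=p$ and $\tt_{k+1}\setminus\tt_k=\tilde p$; then by the definition of $\tt s_k$, $(\tt s_k)_k=\tt_k\cup\{p,\tilde p\}$, which lies in $\Lambda_r^+(k-2(f_k-1))$, while $\tt_k\in\Lambda_r^+(k-2f_k)$. Since the order on $\Lambda^+_{r,k}$ puts the larger deficiency first, $(f_k,\tt_k)\rhd(f_k-1,(\tt s_k)_k)$, so $\tt\rhd\tt s_k$, that is, $\tt s_k\lhd\tt$ \emph{always} under (2). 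Consequently the Lemma's claim under (2) is $T_{\tt,\tt s_k}(k)=1$; the case $\tt s_k\rhd\tt$ under (2) is vacuous. Your proposal applies the argument backward, showing $T_{\tt s_k,\tt}(k)=1$ and then deducing $T_{\tt,\tt s_k}(k)=1-\tfrac{c_\tt(k)}{c_\tt(k+1)}T_{\tt,\tt}^2(k)$, which is the wrong value in this case. The correct route (the paper's) is to apply Lemma~\ref{mts}(b) directly to $\tt$ (its hypothesis $\tt s_k\lhd\tt$ is automatic here), rewrite $\m_\tt T_k^{-1}$ as $\m_\tt(T_k-\delta+\delta E_k)$ via Kauffman's relation, and then project by $F_{\tt s_k}$; the extra $\delta\m_\tt$ and $\delta\m_\tt E_k$ terms vanish after $F_{\tt s_k}$ by the same $\simk$ argument (noting $f_\tt E_k=0$ and no $\vv\succ\tt$ can equal $\tt$ or $\tt s_k$), giving $T_{\tt,\tt s_k}(k)=1$. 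Once you fix the direction of the dominance order under (2), the rest of your argument is sound and in fact slightly slicker than the paper's, thanks to the determinant computation replacing the second application of the Kauffman relation.
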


\begin{proof} By defining relation \ref{waff}(f),
\begin{equation}\label{ddd} f_{\tt}T_kX_k-f_{\tt}X_{k+1}T_k=\delta
f_{\tt}X_{k+1}(E_k-1).\end{equation}
 Since we are assuming that
$\tt_{k-1}\neq \tt_{k+1}$, $\ss\in\{\tt, \tt s_{k}\}$ if
$\ss\simk\tt$. Comparing the coefficients of $f_\tt$ on both sides
of (\ref{ddd}) and using Lemma~\ref{equi1}(b) yields the formula for
$T_{\tt,\tt}(k)$, as required.

First,  we  assume that $\tt \rhd\tt s_{k}$ and $\tt_{k-1}\subset
\tt_{k}\subset \tt_{k+1}$. By Lemma~\ref{fxproduct}(a), $$f_{\tt
}=\m_{\tt } +\sum_{\uu\succ \tt } a_\uu f_\uu$$ for some scalars
$a_\uu\in F$.

By Lemma~\ref{mts}(a) and Lemma~\ref{fxproduct}(b), $\m_{\tt  }
T_{k} =\m_{\tt s_k}+ \sum_{\vv\overset{k}\succ\tt s_{k}
}b_{\vv}f_{\vv}$ for some scalars $b_\vv\in R$. We claim that
$f_{\tt s_{k}}$ can not appear in the expressions of  $f_\uu T_{k}$
with non-zero coefficient. Otherwise, $\uu\overset {k}\sim \tt
s_{k}$, forcing $\uu\in \{\tt, \tt s_{k}\}$. This is a contradiction
since
 $\tt s_{k}\lhd \tt$. By Lemma~\ref{fxproduct}(b),
 the coefficient of $f_{\tt s_{k}}$ in  $f_{\tt
 } T_{k}$ is $1$.

Suppose that $\tt_{k-1}\supset\tt_{k}\subset\tt_{k+1}$. By
Lemma~\ref{mts}(b),
$$\m_{\tt }T_{k}^{-1}=\m_{\tt s_{k}
}+\sum_{\uu\overset{k}\succ\tt s_{k} }a_{\uu}\m_{\uu},$$ for some
scalars $a_\uu\in F$.  Using \ref{waff}(b) to rewrite the above
equality yields
$$
\m_{\tt }T_{k}=\m_{\tt s_{k}}+\sum_{\uu\overset{k}\succ\tt s_{k}
}a_{\uu}\m_{\uu}+\delta \m_{\tt }-\delta \m_{\tt }E_{k}.
$$

We use  Lemma~\ref{fxproduct}(b) to write the terms on the right
hand side of the above equality as a linear combination of
orthogonal basis elements. Since $\tt s_{k}\lhd \tt$, $f_{\tt
s_{k}}$ can not appear in the expression of $
\sum_{\uu\overset{k}\succ\tt s_{k} }a_{\uu}\m_{\uu}+\delta \m_{\tt
}$.

We claim that $f_{\tt s_{k}}$ can not appear in the expression of
$\m_{\tt}E_{k}$. Otherwise,  by Lemma~\ref{fxproduct}(b), we write
$\m_{\tt}=\sum_{\vv\succeq\tt}a_{\vv}f_{\vv}$. Therefore, there is a
$\vv$ such that  $f_{\tt s_{k}}$ appears in the expression of
$f_{\vv}E_{k}$  with non-zero coefficient.  So,
$\vv\overset{k}\sim\tt s_{k}$, forcing $\vv_{k-1}\neq\vv_{k+1}$.
Thus $f_{\vv}E_{k}=0$, a contradiction. This completes the proof of
our claim. Therefore, the coefficient of $f_{\tt s_{k}}$ in
${\m_{\tt}}T_{k}$ is $1$.

 Using Lemma~\ref{fxproduct}(b)
again, we write $\m_{\tt }=f_{\tt } +\sum_{\uu\succ \tt } a_\uu
f_\uu$ for some scalars $a_\uu\in F$.  If $f_{\tt s_{k}}$ appears in
the expression of $\sum_{\uu\succ \tt } a_\uu f_\uu T_{k}$, then
$f_{\tt s_{k}} $ must appear in the expression of $f_\uu T_{k} $ for
some $\uu$. So, $\tt s_{k}\overset k \sim \uu$, forcing $\uu\in
\{\tt, \tt s_{k}\}$. This contradicts the fact $\uu\succ \tt $. So,
the coefficient of $f_{\tt s_{k}}$ in $f_{\tt }T_{k}$ is $1$.

We have proved that
 \begin{equation}\label{step1} f_{\tt
}T_k=\frac{\delta c_{\tt }(k+1)}{c_{\tt }(k+1)-c_{\tt }(k)}f_{\tt
}+f_{\tt s_k},\end{equation}   if $\tt s_k\lhd \tt$ and one of
conditions (1)-(2) holds.  Multiplying $T_k$ on both sided of
(\ref{step1}) and using  \ref{waff}(b) yields
\begin{equation}\label{step2}f_{\tt s_k}T_k=f_{\tt }+\delta f_{\tt
}T_{k}-\frac{\delta c_{\tt }(k+1)}{c_{\tt }(k+1)-c_{\tt }(k)}f_{\tt
}T_{k}-\delta\rho f_{\tt }E_k.\end{equation}
 Note that
$\tt_{k-1}\neq\tt_{k+1}$. By  Lemma~\ref{equi1}(b), $f_{\tt
s_{k}}E_k=0$. Using (\ref{step1}) to simplify (\ref{step2}) and
switching the role between $\tt s_k$ and $\tt$ yields the formula
for $T_{\tt,\tt s_k}(k)$ provided  $\tt s_k\rhd \tt$ together with
one of conditions in (1)-(2) being true.
\end{proof}

Note that $\langle f_\tt T_k,  f_{\tt s_{k}}\rangle= \langle f_\tt,
f_{\tt s_{k}} T_{k}\rangle$. By Lemma~\ref{fxproduct1}, we have the
 following result immediately.

\begin{Cor}\label{recur1}
Suppose $\tt\in\UPD_n(\lambda)$ with $(f,\lambda)\in
\Lambda_{r,n}^+$ and $\tt_{k-1}\neq \tt_{k+1}$. If $\tt
s_{k}\in\UPD_n(\lambda)$, $\tt s_k\lhd\tt$ and one of the conditions
(1)-(2) in Lemma~\ref{fxproduct1} holds, then
$$\langle f_{\tt s_k}, f_{\tt s_k}\rangle=(1-\frac{\delta^2c_{\tt}(k)c_{\tt}(k+1)}{(c_{\tt}(k+1)-c_{\tt}(k))^2})
\langle f_{\tt} , f_{\tt }\rangle.$$
\end{Cor}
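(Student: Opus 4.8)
The plan is to exploit the invariance of the bilinear form $\langle\ ,\ \rangle$ on $\Delta(f,\lambda)$ under the anti-involution $\ast$, which fixes $T_k$, together with the explicit formulae for $f_\tt T_k$ obtained in Lemma~\ref{fxproduct1}. First I would record that, since $\tt_{k-1}\neq\tt_{k+1}$ and $\tt s_k\in\UPD_n(\lambda)$, the only updown tableaux $\simk\tt$ are $\tt$ and $\tt s_k$; hence by Lemma~\ref{fxproduct1} we have $f_\tt T_k=T_{\tt,\tt}(k)f_\tt+T_{\tt,\tt s_k}(k)f_{\tt s_k}$ and, symmetrically, $f_{\tt s_k}T_k=T_{\tt s_k,\tt s_k}(k)f_{\tt s_k}+T_{\tt s_k,\tt}(k)f_\tt$. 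Because $\tt s_k\lhd\tt$ and one of the conditions (1)--(2) holds, Lemma~\ref{fxproduct1} applied once with $\tt s_k$ playing the role of the smaller tableau gives $T_{\tt s_k,\tt}(k)=1$, and applied once more (switching the roles) gives $T_{\tt,\tt s_k}(k)=1-\tfrac{c_\tt(k)}{c_\tt(k+1)}T_{\tt,\tt}^2(k)$, with $T_{\tt,\tt}(k)=\tfrac{\delta c_\tt(k+1)}{c_\tt(k+1)-c_\tt(k)}$; note also that $c_{\tt s_k}(j)=c_\tt(j)$ for all $j\ge k+1$ while $c_{\tt s_k}(k)=c_\tt(k+1)$ and $c_{\tt s_k}(k+1)=c_\tt(k)$, so $T_{\tt s_k,\tt s_k}(k)=\tfrac{\delta c_\tt(k)}{c_\tt(k)-c_\tt(k+1)}=-T_{\tt,\tt}(k)$.

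Next I would use $\ast$-invariance: since $T_k^\ast=T_k$, we have $\langle f_\tt T_k,\ f_{\tt s_k}\rangle=\langle f_\tt,\ f_{\tt s_k}T_k\rangle$. The left-hand side, using orthogonality of the $f$-basis (Lemma~\ref{fxproduct}(g), $\langle f_\uu,f_\vv\rangle=\delta_{\uu\vv}\langle f_\uu,f_\uu\rangle$ on $\Delta(f,\lambda)$), equals $T_{\tt,\tt s_k}(k)\langle f_{\tt s_k},f_{\tt s_k}\rangle$. The right-hand side equals $T_{\tt s_k,\tt}(k)\langle f_\tt,f_\tt\rangle=\langle f_\tt,f_\tt\rangle$, since $T_{\tt s_k,\tt}(k)=1$. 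Therefore
$$\langle f_{\tt s_k},f_{\tt s_k}\rangle=\frac{1}{T_{\tt,\tt s_k}(k)}\langle f_\tt,f_\tt\rangle.$$
It then remains to simplify $T_{\tt,\tt s_k}(k)^{-1}$. Plugging in $T_{\tt,\tt}(k)=\tfrac{\delta c_\tt(k+1)}{c_\tt(k+1)-c_\tt(k)}$ gives
$$T_{\tt,\tt s_k}(k)=1-\frac{c_\tt(k)}{c_\tt(k+1)}\cdot\frac{\delta^2 c_\tt(k+1)^2}{(c_\tt(k+1)-c_\tt(k))^2}=1-\frac{\delta^2 c_\tt(k)c_\tt(k+1)}{(c_\tt(k+1)-c_\tt(k))^2},$$
and I would want to check that this quantity is invertible (nonzero) under Assumption~\ref{separate}; this follows because $c_\tt(k)\neq c_\tt(k+1)$ by the separateness, and a short computation shows $T_{\tt,\tt s_k}(k)$ factors so that its vanishing would force a forbidden coincidence of residues. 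Substituting back yields the asserted recursion $\langle f_{\tt s_k},f_{\tt s_k}\rangle=\bigl(1-\tfrac{\delta^2 c_\tt(k)c_\tt(k+1)}{(c_\tt(k+1)-c_\tt(k))^2}\bigr)\langle f_\tt,f_\tt\rangle$.

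The main obstacle, as I see it, is purely bookkeeping rather than conceptual: one must be careful about which of $\tt,\tt s_k$ is the dominant tableau when invoking the two branches of the formula in Lemma~\ref{fxproduct1}, and about correctly tracking how the residues $c$ permute under the operation $\ss\mapsto\ss s_k$ (swap at positions $k,k+1$). Once the bookkeeping is right, the identity $\langle f_\tt T_k,f_{\tt s_k}\rangle=\langle f_\tt,f_{\tt s_k}T_k\rangle$ collapses everything to the single scalar identity above. A minor auxiliary point to confirm is that $\langle f_\tt,f_\tt\rangle\neq 0$ is not needed here — the statement is an identity of scalars valid regardless — and that the case-hypotheses (1)--(2) are exactly what guarantee $\tt s_k$ is again a genuine $n$-updown $\lambda$-tableau with $T_{\tt s_k,\tt}(k)=1$, so the argument never leaves the span of $\{f_\tt,f_{\tt s_k}\}$.
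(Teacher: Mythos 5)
Your overall strategy is exactly the one the paper uses: in the line just before the corollary, the authors note $\langle f_\tt T_k, f_{\tt s_k}\rangle=\langle f_\tt, f_{\tt s_k} T_k\rangle$ and then read off the answer from Lemma~\ref{fxproduct1}. So the approach is identical. However, your bookkeeping is transposed, and the final substitution contradicts the line above it; the two slips happen to cancel.

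Concretely, with the convention $f_\tt T_k=T_{\tt,\tt}(k)f_\tt+T_{\tt,\tt s_k}(k)f_{\tt s_k}$ that you yourself set up, Lemma~\ref{fxproduct1} applied directly to $\tt$ (with $\tt s_k\lhd\tt$) gives $T_{\tt,\tt s_k}(k)=1$ — that is the coefficient of $f_{\tt s_k}$ \emph{inside $f_\tt T_k$}. You instead assert $T_{\tt s_k,\tt}(k)=1$. The quantity $T_{\tt s_k,\tt}(k)$ is computed by applying the Lemma to $\uu=\tt s_k$, whose $\uu s_k=\tt\rhd\uu$, and is $T_{\tt s_k,\tt}(k)=1-\frac{c_{\tt s_k}(k)}{c_{\tt s_k}(k+1)}T_{\tt s_k,\tt s_k}^2(k) =1-\frac{\delta^2 c_\tt(k)c_\tt(k+1)}{(c_\tt(k+1)-c_\tt(k))^2}$. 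So the two coefficients you name are swapped. Feeding your (swapped) values into $T_{\tt,\tt s_k}(k)\,\langle f_{\tt s_k},f_{\tt s_k}\rangle=T_{\tt s_k,\tt}(k)\,\langle f_\tt,f_\tt\rangle$ gives $\langle f_{\tt s_k},f_{\tt s_k}\rangle=T_{\tt,\tt s_k}(k)^{-1}\langle f_\tt,f_\tt\rangle$ — you write this — but your last display nevertheless reports the factor $T_{\tt,\tt s_k}(k)$ and not its reciprocal. This second slip exactly undoes the first, so the final statement is right for the wrong reasons. With the correct assignment ($T_{\tt,\tt s_k}(k)=1$, $T_{\tt s_k,\tt}(k)$ the product factor) no reciprocal ever appears and the argument is clean. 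A small further point: $T_{\tt s_k,\tt s_k}(k)=\delta-T_{\tt,\tt}(k)$, not $-T_{\tt,\tt}(k)$; the two diagonal coefficients sum to $\delta$ (from the Kauffman relation), not to $0$. Finally, when you invoke Lemma~\ref{fxproduct1} for $\tt s_k$ you should make sure the hypotheses actually transfer: this is automatic under condition (1), while under condition (2) one should instead use the intermediate identity (\ref{step2}) in its proof, which computes $f_{\tt s_k}T_k$ directly from $f_\tt T_k$ under conditions stated for $\tt$. The paper glosses over this as well, but it is worth being aware of.
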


Let $a$ be an integer. Let $[a]_{q^2}=\frac{q^{2a}-1}{q^2-1}$. For
any partition $\lambda=(\lambda_1, \lambda_2, \dots, \lambda_k)$,
let
$[\lambda]_{q^2}!=[\lambda_1]_{q^2}![\lambda_2]_{q^2}!\cdots[\lambda_k]_{q^2}!$.
If $\lambda=(\lambda^{(1)}, \lambda^{(2)}, \dots, \lambda^{(r)})\in
\Lambda_r^+(n)$, let $[\lambda]_{q^2}!=[\lambda^{(1)}]_{q^2}!
[\lambda^{(2)}]_{q^2}! \cdots [\lambda^{(r)}]_{q^2}!$.

\begin{Lemma}(cf. \cite[6.11]{RS})\label{ttla} Suppose that $(f,\lambda)\in \Lambda_{r,n_1}^+$  and $(f, \mu)\in \Lambda^+_{r, n_2}$.
Let $[\lambda]=[a_1, a_2, \dots, a_r] $ and $[\mu]=[b_1, b_2, \dots,
b_r]$. Then
\begin{equation}\label{ftlaration}\frac{\langle f_{\tt^\lambda}, f_{\tt^\lambda}
\rangle} {\langle f_{\tt^\mu}, f_{\tt^\mu}\rangle } =\frac{
[\lambda]_{q^2} ! \prod_{j=2}^r \prod_{k=1}^{a_{j-1}}
(c_{\bft^\lambda}(k)-u_j)} { [\mu]_{q^2} ! \prod_{j=2}^r
\prod_{k=1}^{b_{j-1}} (c_{\bft^\mu}(k)-u_j)}.\end{equation}
\end{Lemma}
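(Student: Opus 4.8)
The plan is to evaluate each of $\langle f_{\tt^\lambda}, f_{\tt^\lambda}\rangle$ and $\langle f_{\tt^\mu}, f_{\tt^\mu}\rangle$ up to a scalar factor that does not depend on $\lambda$ (or on $n$), by reducing the computation to a Gram--type quantity for an Ariki--Koike algebra. Since $n_1-2f=|\lambda|$ and $n_2-2f=|\mu|$, the relevant Ariki--Koike algebras $\H_{r,n_1-2f}$ and $\H_{r,n_2-2f}$ are genuinely different, but the common leftover scalar depends only on $f$ and on $q,u_1,\dots,u_r,\varrho$, so it cancels in the ratio.

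First, $\tt^\lambda$ is the $\succ$--maximal element of $\UPD_n(\lambda)$, so Lemma~\ref{fxproduct}(a) gives $f_{\tt^\lambda}=\m_{\tt^\lambda}$ and hence $\langle f_{\tt^\lambda}, f_{\tt^\lambda}\rangle=\langle\m_{\tt^\lambda},\m_{\tt^\lambda}\rangle$; by Theorem~\ref{murphynaza} this scalar is the coefficient of $\m_{\tt^\lambda\tt^\lambda}$ in $\m_{\tt^\lambda\tt^\lambda}\m_{\tt^\lambda\tt^\lambda}$ modulo $\W_{r,n}^{\rhd(f,\lambda)}$. I would then substitute the explicit expression (\ref{mlambda}) for $\m_{\tt^\lambda}$ together with $\m_\lambda=E^{f,n}M_\lambda$, and use that $E^{f,n}=E_{n-1}E_{n-3}\cdots E_{n-2f+1}$ squares to $\omega_0^f E^{f,n}$ and commutes with $M_\lambda$ (since $M_\lambda$ involves only $T_w$ with $w\in\Sym_\lambda$ and $X_i$ with $i\le n-2f$, while $E^{f,n}$ uses only indices exceeding $n-2f$; cf. Definition~\ref{waff}). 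Collapsing the inner copy of $E^{f,n}$ by Corollary~\ref{ewef} rewrites $\langle\m_{\tt^\lambda},\m_{\tt^\lambda}\rangle$ as $E^{f,n}$ times an element of $\W_{r,n-2f}$ of the shape $M_\lambda\cdot(\text{cup, }X\text{- and }\omega\text{-factors})\cdot M_\lambda$; repeated use of the tangle, untwisting, anti--symmetry and unwrapping relations turns the $f$ cup--pairs and the overhanging factors $\prod_{j=2}^r\prod_{k=1}^f(X_{2k-1}-u_j)$ into a scalar depending only on $f$, $q$, $\varrho$ and the $\omega_a$'s.

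Next I would pass through the isomorphism $\epsilon_{n-2f}\colon\H_{r,n-2f}\rightarrow\W_{r,n-2f}/\Ef_{n-2f}$ (cf. \cite[5.8]{RX}), under which the surviving $M_\lambda(\cdots)M_\lambda$ factor becomes $\epsilon_{n-2f}$ of $(\m_{\t^\lambda\t^\lambda})^2$, the square of the Murphy basis element of $\H_{r,n-2f}$. The classical Gram computation for the Murphy basis of Ariki--Koike algebras \cite{DJM:cyc} (equivalently, $\sum_{w\in\Sym_\lambda}q^{2\ell(w)}=[\lambda]_{q^2}!$ handles the parabolic Hecke part, while the cyclotomic factors $\prod_{s=2}^r\prod_{i=1}^{a_{s-1}}(X_i-u_s)$ contribute $\prod_{j=2}^r\prod_{k=1}^{a_{j-1}}(c_{\bft^\lambda}(k)-u_j)$ modulo less dominant terms) gives
$$\langle\m_{\t^\lambda},\m_{\t^\lambda}\rangle_{\H_{r,n-2f}}=[\lambda]_{q^2}!\prod_{j=2}^r\prod_{k=1}^{a_{j-1}}(c_{\bft^\lambda}(k)-u_j).$$
Putting the two steps together yields $\langle f_{\tt^\lambda},f_{\tt^\lambda}\rangle=\Phi(f)\,[\lambda]_{q^2}!\prod_{j=2}^r\prod_{k=1}^{a_{j-1}}(c_{\bft^\lambda}(k)-u_j)$ for a scalar $\Phi(f)$ independent of $\lambda$ and of $n$, and the identical formula for $\mu$ with the same $\Phi(f)$; dividing gives (\ref{ftlaration}). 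The whole argument runs parallel to that of \cite[6.11]{RS}, with Corollary~\ref{ewef} and Lemma~\ref{fxproduct} in place of the corresponding results there, and the case $f=0$ is just the Ariki--Koike statement itself.

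The step I expect to be the main obstacle is the bookkeeping in the middle paragraph: after collapsing the $E^{f,n}$--sandwich one must push the overhanging factors $X_1,X_3,\dots,X_{2f-1}$ and the cup generators $E_{n-2(f-i)-1,2i-1}$ downward through the defining relations and verify that the resulting scalar $\Phi(f)$ really depends only on $f$ and the global parameters, not on $\lambda$ or on $n$. Once that independence is established, the ratio formula is immediate.
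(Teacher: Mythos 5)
Your outline is correct and follows the same route the paper takes by citing \cite[6.11]{RS}: identify $f_{\tt^\lambda}=\m_{\tt^\lambda}$ using $\succ$-maximality, split $\m_{\tt^\lambda\tt^\lambda}^2$ into a contractible--loops factor and an Ariki--Koike factor via Corollary~\ref{ewef} and the isomorphism $\proj_{n-2f}$, feed in the standard Murphy--basis Gram scalar $[\lambda]_{q^2}!\prod_{j,k}(c_{\t^\lambda}(k)-u_j)$, and argue that the loop factor $\Phi(f)$ (coming from the $\omega_a$'s, $\varrho$, $q$) is independent of $\lambda$ and of $n$ so it cancels. You have also correctly located the only nontrivial verification, namely that the braiding of the $E$-- and $X$--factors really does produce a scalar depending only on $f$ and the global parameters, which is exactly the bookkeeping carried out in \cite[6.11]{RS}.
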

\begin{proof} This can be verified by arguments in the proof of \cite[6.11]{RS}. We leave the details to the reader.
\end{proof}

\begin{number1}
 Suppose that $\lambda\in \Lambda_r^+(n-2f)$. Following \cite{RS}, we define   $ \mathscr
A(\lambda)$ (resp. $\mathscr R(\lambda))$ to be the set of all
addable (resp. removable) nodes of $\lambda$. Given a removable
(resp. an addable) node  $p=(s, k, \lambda_k)$ (resp. $(s, k,
\lambda_k+1)$) of $\lambda$, define
\begin{enumerate}
\item  $\mathscr R(\lambda)^{<p} =\{(h, l, \lambda_l)\in \mathscr
R(\lambda) \mid (h,l)>(s,k) \}$, \item $\mathscr A(\lambda)^{<p}
=\{(h, l, \lambda_l+1)\in \mathscr A(\lambda) \mid (h,l)
>(s,k)\}$, \item $\mathscr {AR}(\lambda)^{\ge p} =\{(h, l,
\lambda_l)\in \mathscr R(\lambda) \mid (h,l)\le (s,k)\}\cup \{(h, l,
\lambda_l+1)\in \mathscr A(\lambda) \mid (h,l)\le
(s,k)\}$.
\end{enumerate}
\end{number1}

Following \cite{RS:discriminants}, let  $\hat \tt=(\tt_0, \tt_1,
\tt_2, \dots, \tt_{n-1})$ and $\tilde \tt=(\ss_0, \ss_1, s_2,\dots,
\ss_{n-1}, \tt_{n})$ with $\tt_{n-1}=\mu$ and $(\ss_0, \ss_1, \ss_2,
\dots, \ss_{n-1})=\tt^\mu$ for any $\tt=(\tt_0, \tt_1, \tt_2, \dots,
\tt_n)\in \UPD_n(\lambda)$. Standard arguments prove the following
result (cf. \cite[6.15]{RS}).
\begin{Prop}\label{inductionkey} Assume that  $\tt\in \UPD_n(\lambda)$ with
$(f, \lambda)\in \Lambda^+_{r,n}$. If $\tt_{n-1}=\mu$,  then $$
\langle f_\tt,\ f_\tt \rangle = \langle f_{\hat\tt},\ f_{\hat\tt}
\rangle \frac {\langle f_{\tilde \tt},\ f_{\tilde \tt}\rangle}
{\langle f_{\tt^\mu}, f_{\tt^\mu}\rangle}.$$
\end{Prop}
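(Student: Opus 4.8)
The plan is to compute $\langle f_\tt,f_\tt\rangle$ by restricting $\Delta(f,\lambda)$ to $\W_{r,n-1}$ and using the cell filtration of Theorem~\ref{cellf}. Set $\mu=\tt_{n-1}$, and let $\ell$ be $f$ or $f-1$ according as the $n$-th step of $\tt$ adjoins or deletes a box, so that $(\ell,\mu)\in\Lambda^+_{r,n-1}$ and $\mu=\mu_\lambda(i)$ for a unique $i$. By Lemma~\ref{fxproduct}(a), $f_\tt=\m_\tt+\sum_{\ss\succ\tt}a_\ss\m_\ss$ with every such $\ss$ satisfying $\ss_{n-1}\gedom\mu$, so $f_\tt$ lies in the filtration term $S^{\gedom\mu_\lambda(i)}$ but not in $S^{\gedom\mu_\lambda(i-1)}$; pushing its image in the subquotient through $\phi_i^{-1}$ gives a nonzero element $\bar f_\tt\in\Delta(\ell,\mu)$, where the seminormal machinery of Section~3 applies to $\W_{r,n-1}$ since Assumption~\ref{separate} for $n$ implies the separate condition for $n-1$.

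Next I would show $\bar f_\tt=\beta_\tt\,f_{\hat\tt}$ for a scalar $\beta_\tt\in F^\times$, and that the resulting transfer formula has the shape $\langle f_\tt,f_\tt\rangle=\Gamma(\mu,\lambda)\,\langle f_{\hat\tt},f_{\hat\tt}\rangle$ with $\Gamma(\mu,\lambda)$ depending only on the pair $(\mu,\lambda)$ — equivalently only on $\tt_{n-1}$ and $\tt_n$ — and not on $\tt_0,\dots,\tt_{n-2}$. The first point is immediate: $\phi_i$ commutes with right multiplication by $X_1,\dots,X_{n-1}$, so $\bar f_\tt$ is a joint eigenvector for these with eigenvalues $c_\tt(1),\dots,c_\tt(n-1)=c_{\hat\tt}(1),\dots,c_{\hat\tt}(n-1)$, and by \cite[4.5]{RX} the only $(n-1)$-updown $\mu$-tableau with that content sequence is $\hat\tt$; Lemma~\ref{fxproduct}(c),(e) for $\W_{r,n-1}$ then force $\bar f_\tt\in F\,f_{\hat\tt}$, nonzero by the previous paragraph. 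For the second point one unwinds the definitions: by (\ref{des-b}) the element $b_{\tt_n}$ factors as $(\text{a step-}n\text{ factor depending only on }\lambda,\mu\text{ and the moved box})\cdot b_{\hat\tt}$, and comparing with (\ref{up}) one finds $\m_\lambda b_{\tt_n}$ equals $y^\lambda_{\mu_\lambda(i)}b_{\hat\tt}$ modulo $\W_{r,n}^{\rhd(f,\lambda)}$ up to a unit involving only contents of $\lambda$ — directly in the box-adjoining case, and in the box-deleting case after commuting $E_{n-1}$ and the cycles $T_{a,b}$ past $M_\lambda$ using Lemma~\ref{rightkey}(c),(d). Since $\phi_i$ carries $\m_{\hat\tt}$ to $y^\lambda_{\mu_\lambda(i)}b_{\hat\tt}$ and commutes with the operators $F_{\hat\tt,k}$, $k\le n-1$, while $F_{\tt,n}$ fixes $f_\tt$ because $f_\tt X_n=c_\tt(n)f_\tt$, one reads off that $\beta_\tt$ and the distortion of the invariant form under $\phi_i$ both depend only on $(\mu,\lambda)$, which gives the transfer formula.

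Granting this, I would finish by specializing the transfer formula $\langle f_\tt,f_\tt\rangle=\Gamma(\mu,\lambda)\langle f_{\hat\tt},f_{\hat\tt}\rangle$ to $\tt=\tilde\tt$. The first $n-1$ steps of $\tilde\tt$ form $\tt^\mu$ by construction, so this yields $\langle f_{\tilde\tt},f_{\tilde\tt}\rangle=\Gamma(\mu,\lambda)\langle f_{\tt^\mu},f_{\tt^\mu}\rangle$, hence $\Gamma(\mu,\lambda)=\langle f_{\tilde\tt},f_{\tilde\tt}\rangle/\langle f_{\tt^\mu},f_{\tt^\mu}\rangle$; substituting back gives the asserted identity. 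The main obstacle is the bookkeeping in the middle paragraph — pinning down the unit scalar through $\phi_i$ in the box-deleting case, where $E_{n-1}$ and the cycles $T_{a,b}$ must be pushed past $M_\lambda$, the two systems of $E$'s reconciled, and the contents $c_\lambda$ tracked exactly. This is precisely the computation of \cite[6.15]{RS}, and it goes through here with Lemma~\ref{rightkey} and Theorem~\ref{cellf} in place of their degenerate counterparts.
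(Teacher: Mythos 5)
Your proposal is correct and follows essentially the same route the paper intends: the paper's "proof" is just a citation to \cite[6.15]{RS}, and you have reconstructed exactly the argument underlying that reference --- restrict $\Delta(f,\lambda)$ through the cell filtration of Theorem~\ref{cellf}, identify the image of $f_\tt$ in the subquotient with a scalar multiple of $f_{\hat\tt}$ using the separate condition and Lemma~\ref{fxproduct}, observe that both the scalar and the proportionality constant between the restricted form and the cellular form depend only on the last two layers $(\mu,\lambda)$, and then read off that constant by specializing to $\tilde\tt$ (whose first $n-1$ layers are $\tt^\mu$). You also correctly locate the one nontrivial verification --- that in the box-removing case the step-$n$ factor of $b_{\tt_n}$ reconciles with $y^\lambda_{\mu_\lambda(i)}$ modulo $\Wlam$ up to a unit depending only on $(\mu,\lambda)$ --- which is exactly the computation done in \cite[6.15]{RS} and carries over verbatim using Lemma~\ref{rightkey}.
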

By Proposition~\ref{inductionkey}, we can compute $\langle f_\tt,\
f_\tt \rangle$ recursively if we know how to compute $\frac {\langle
f_{\tilde \tt},\ f_{\tilde \tt}\rangle} {\langle f_{\tt^\mu},
f_{\tt^\mu}\rangle}$. There are three cases which will be given in
Propositions~\ref{up1}, \ref{down1} and~\ref{down2}.
\begin{Prop}\label{up1}
Suppose that  $\tt\in\UPD_n(\lambda)$ with $(f,\lambda)\in
\Lambda_{r,n}^+$. If $\hat{\tt}=\t^{\mu}$ with $\tt_n=\tt_{n-1}\cup
\{p\}$ and $p=(m,k,\lambda_{k}^{(m)})$, then
\begin{equation}\label{ste1}
 \frac{\langle f_{\tt},f_{\tt}\rangle}{\langle
f_{\tt^{\mu}},f_{\tt^{\mu}}\rangle} =\frac{(-1)^{r-m}
q^{2k}}{u_m(1-q^2)}
\frac{\prod\limits_{a\in\mathscr{A}(\lambda)^{<p}}(c_{\lambda}(a)-c_{\lambda}(p)^{-1})}
{\prod\limits_{a\in\mathscr{R}(\lambda)^{<p}}(c_{\lambda}(a)^{-1}-c_{\lambda}(p)^{-1})}.
\end{equation}
\end{Prop}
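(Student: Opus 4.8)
The plan is as follows. Since $\hat\tt=\t^{\mu}$ we have $\tilde\tt=\tt$, so by Proposition~\ref{inductionkey} it is enough to compute $\langle f_\tt,f_\tt\rangle/\langle f_{\t^\mu},f_{\t^\mu}\rangle$. Write $\lambda=\mu\cup\{p\}$, where $p=(m,k,\lambda^{(m)}_k)$ is a removable node of $\lambda$, so that $(f,\lambda)\in\Lambda^+_{r,n}$ and $(f,\mu)\in\Lambda^+_{r,n-1}$, and set $\gamma=\lambda^{(m)}_k$ and $z=c_\lambda(p)^{-1}=u_mq^{2(\gamma-k)}$. The strategy is to compare $\tt$ with the maximal updown tableau $\t^\lambda\in\UPD_n(\lambda)$: the ratio $\langle f_{\t^\lambda},f_{\t^\lambda}\rangle/\langle f_{\t^\mu},f_{\t^\mu}\rangle$ is computed by Lemma~\ref{ttla} (applied with $n_1=n$, $n_2=n-1$), and the passage between $\tt$ and $\t^\lambda$ is handled by Corollary~\ref{recur1}.

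First I would join $\tt$ to $\t^\lambda$ by a chain in $\UPD_n(\lambda)$ in which consecutive tableaux differ by a single $s_j$. In both $\tt$ and $\t^\lambda$ the first $2f$ steps are the same up-down pairs, and each subsequent step adds a box of $\lambda$; moreover the boxes of $\mu$ are added in the same order --- that of $\bft^\mu$, which is the reading order of $\bft^\lambda$ with $p$ deleted --- the only difference being that $p$ is added at the last step in $\tt$ but at an earlier step in $\t^\lambda$. Hence $\t^\lambda$ is obtained from $\tt$ by moving $p$ leftwards one box at a time, past each box $q$ of $\lambda$ that follows $p$ in the reading order of $\bft^\lambda$. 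Each intermediate tableau is again a legal updown tableau; the two boxes interchanged at each step lie in different rows and different columns (since $p$ is removable, every box of $\lambda^{(m)}$ below row $k$ sits in a column $<\gamma$, while boxes in components $>m$ are automatically incomparable with $p$); and both interchanged steps are additions, so Lemma~\ref{fxproduct1}(1) and hence Corollary~\ref{recur1} apply, with the tableau in which $p$ is the earlier addition playing the dominant role. Iterating, and using that the factor in Corollary~\ref{recur1} is symmetric under $c_\tt(k)\leftrightarrow c_\tt(k+1)$, one obtains
\[
\frac{\langle f_\tt,f_\tt\rangle}{\langle f_{\t^\lambda},f_{\t^\lambda}\rangle}=\prod_{q}\frac{(z-q^2c(q))(z-q^{-2}c(q))}{(z-c(q))^2},
\]
where $c(q)=u_hq^{2(j-i)}$ for $q=(h,i,j)$ and the product runs over the boxes $q$ of $\lambda$ after $p$ in reading order, i.e.\ rows $k+1,k+2,\dots$ of $\lambda^{(m)}$ together with all of $\lambda^{(m+1)},\dots,\lambda^{(r)}$; here one also uses $1-\delta^2ab/(a-b)^2=(b-q^2a)(b-q^{-2}a)/(b-a)^2$.

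Next I would specialise Lemma~\ref{ttla}. Since $\lambda$ differs from $\mu$ only by the box $p$, the factorial quotient is $[\lambda]_{q^2}!/[\mu]_{q^2}!=[\gamma]_{q^2}$ and the content quotient telescopes to $\prod_{j=m+1}^{r}(z-u_j)$, so $\langle f_{\t^\lambda},f_{\t^\lambda}\rangle/\langle f_{\t^\mu},f_{\t^\mu}\rangle=[\gamma]_{q^2}\prod_{j=m+1}^{r}(z-u_j)$. Multiplying the two ratios, it remains to telescope $\prod_q(\cdots)$: grouping the boxes $q$ by rows, the contribution of each row collapses to a ratio of its ``end-of-row'' contents over its ``first-box'' contents, and on summing over the rows of a component and then over the components $\ge m$ the first-box factors telescope while the end-of-row factors cancel between consecutive rows except exactly at the genuine addable and removable nodes of $\lambda$ lying after $p$. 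After simplification one finds
\[
\frac{\langle f_\tt,f_\tt\rangle}{\langle f_{\t^\mu},f_{\t^\mu}\rangle}=\frac{[\gamma]_{q^2}}{z-u_mq^{-2k}}\cdot\frac{\prod_{a\in\mathscr A(\lambda)^{<p}}(z-c_\lambda(a))}{\prod_{a\in\mathscr R(\lambda)^{<p}}(z-c_\lambda(a)^{-1})},
\]
and since $z-u_mq^{-2k}=u_mq^{-2k}(q^{2\gamma}-1)$ while $[\gamma]_{q^2}=(q^{2\gamma}-1)/(q^2-1)$, the scalar in front is $q^{2k}/(u_m(q^2-1))$. Finally, rewriting each $z-c_\lambda(a)$ as $-(c_\lambda(a)-z)$ (and likewise for removable nodes) introduces the sign $(-1)^{|\mathscr A(\lambda)^{<p}|-|\mathscr R(\lambda)^{<p}|}$; using $|\mathscr A(\lambda)|-|\mathscr R(\lambda)|=r$ together with the elementary fact that a partition with a removable node in row $k$ has equally many addable and removable nodes in its first $k$ rows, one checks $|\mathscr A(\lambda)^{<p}|-|\mathscr R(\lambda)^{<p}|\equiv r-m+1\pmod 2$, which converts the scalar into $(-1)^{r-m}q^{2k}/(u_m(1-q^2))$ and gives (\ref{ste1}).

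The step carrying the real work is the row-by-row telescoping of $\prod_q(\cdots)$ into the product over $\mathscr A(\lambda)^{<p}$ and $\mathscr R(\lambda)^{<p}$, together with the parity count producing the sign $(-1)^{r-m}$; the remaining ingredients (the explicit chain of $s_j$-moves, Corollary~\ref{recur1}, and Lemma~\ref{ttla}) are routine, and the whole argument parallels the Birman--Wenzl case in \cite[\S6]{RS}.
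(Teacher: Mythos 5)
Your proposal matches the paper's proof exactly: the paper also writes $\tt=\tt^\lambda s_{a,n}$ with $a$ the step at which $p$ appears in $\tt^\lambda$, builds the chain $\tt\lhd\tt s_{n-1}\lhd\cdots\lhd\tt^\lambda$ (all additions, so Corollary~\ref{recur1} via Lemma~\ref{fxproduct1}(1) applies at each step), arrives at the same product $\prod_{j=a+1}^n\bigl(1-\delta^2 c_{\tt^\lambda}(j)c_{\tt^\lambda}(a)/(c_{\tt^\lambda}(j)-c_{\tt^\lambda}(a))^2\bigr)$, and combines it with Lemma~\ref{ttla}. The only difference is that you spell out the telescoping and the $(-1)^{r-m}$ parity bookkeeping which the paper compresses into ``simplifying \dots\ yields (\ref{ste1})''.
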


\begin{proof}  Let $\lambda=[a_1,a_2,\cdots,a_r]$, and
$\tt=\tt^{\lambda}s_{a,n}$ where
$a=2f+a_{m-1}+\sum_{i=1}^k\lambda_i^{(m)}$. Note that $\tt\lhd\tt
s_{n-1}\lhd\cdots\lhd\tt s_{n,a}=\tt^{\lambda}$, and
$\tt_a\subset\tt_{a+1}\subset\cdots\subset\tt_n$. Applying
Corollary~\ref{recur1} on the pairs $\{f_{\tt^{\lambda}s_{a,j}},
f_{\tt^{\lambda}s_{a,j+1}}\}, a\leq j\leq n-1$, we have
\begin{equation}\label{ste} \langle f_{\tt},f_{\tt}\rangle=\langle
f_{\tt^{\lambda}},f_{\tt^{\lambda}}\rangle
\prod_{j=a+1}^n(1-\delta^2\frac{c_{\tt^{\lambda}}(j)c_{\tt^{\lambda}}(a)}{(c_{\tt^{\lambda}}(j)-c_{\tt^{\lambda}}(a))^2}).
\end{equation}
Simplifying (\ref{ste}) via  the definition of $c_{\tt^\lambda}(j)$
 $a\le j\le n$ together with   (\ref{ftlaration}) yields
(\ref{ste1}).\end{proof}

\begin{Prop}\label{down1}
Suppose that   $\tt\in{\UPD_n}(\lambda)$ with $\lambda\in
\Lambda_r^+(n-2f)$ and  $\tt^{\mu}=\hat{\tt}$. If
$\tt_{n-1}=\tt_n\cup\{p\}$ with $p=(s,k,\mu_k^{(s)})$ such that
$\mu^{(j)}=\emptyset$ for all integers $j, s<j\leq r$ and
$l(\mu^{(s)})=k$, then
\begin{equation}
\frac{\langle f_{\tt},f_{\tt}\rangle}{\langle
f_{\tt^{\mu}},f_{\tt^{\mu}}\rangle}=[\mu_k^{(s)}]_{q^2}E_{\tt\tt}(n-1)\prod_{j=s+1}^r(u_sq^{2(\mu_k^{(s)}-k)}-u_j)\\
\end{equation}
\end{Prop}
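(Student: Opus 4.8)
The plan is to compute $\langle f_{\tt},f_{\tt}\rangle$ directly by writing $\m_{\tt}$ as $E_{n-1}$ acting on $\m_{\tt^{\mu}}$ and then reading off the three asserted factors from the three pieces of data that build $\m_{\tt}$. Since $\hat\tt=\tt^{\mu}$ by hypothesis we have $\tilde\tt=\tt$, so Proposition~\ref{inductionkey} gives nothing here; this Proposition instead supplies one of the base ratios feeding into it. First I would record the combinatorics of $p=(s,k,\mu_k^{(s)})$: the hypotheses $\mu^{(j)}=\emptyset$ for $s<j\le r$ and $l(\mu^{(s)})=k$ say precisely that $p$ is the last box of $Y(\mu)$ in reading order, so the integer $b_{s,k}$ in the recursive definition of $\m_{\tt}$ equals $|\mu|=n-2f+1$, whence
$$\m_{\tt}=E_{n-1}\,T_{n-1,\,n-2f+1}\,\m_{\tt^{\mu}}\qquad\text{in }\W_{r,n},$$
with $\m_{\tt^{\mu}}\in\W_{r,n-1}$ viewed inside $\W_{r,n}$. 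The same hypotheses force the entry of $p$ in $\bft^{\mu}$ to be $|\mu|$, so $\bft^{\mu}$ with that entry deleted is $\bft^{\lambda}$; hence $\tt_{n-2}=\lambda=\tt_n$ (making $E_{\tt\tt}(n-1)$ meaningful by Lemma~\ref{equi1}(d)) and $c_{\tt}(n)=c_{\mu}(p)=u_s^{-1}q^{-2(\mu_k^{(s)}-k)}$, whose inverse is the content of $p$ viewed as an addable node of $\lambda$.

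Next I would pass to the orthogonal basis. The content sequence of $\tt$ on its first $n-1$ terms coincides with that of $\tt^{\mu}$, so $F_{\tt}$ and $F_{\tt^{\mu}}$ are compatible and, by Lemma~\ref{fxproduct}, $f_{\tt}$ is the $F$-projection of $E_{n-1}T_{n-1,n-2f+1}f_{\tt^{\mu}}$. I would then compute $\langle f_{\tt},f_{\tt}\rangle$ from the structure constant $f_{\tt\tt}f_{\tt\tt}\equiv\langle f_{\tt},f_{\tt}\rangle f_{\tt\tt}\pmod{\W_{r,n}^{\rhd(f,\lambda)}}$ of Lemma~\ref{fxproduct}(g), reducing the left hand side modulo $\W_{r,n}^{\rhd(f,\lambda)}$: using that each $E_i$ and $T_i$ is fixed by $\ast$, that $E_{n-1}^2=\omega_0E_{n-1}$ (Definition~\ref{waff}(d)), the tangle and anti-symmetry relations of Definition~\ref{waff}, and Lemma~\ref{ewe}, one pushes the two copies of $E_{n-1}$ and the braid word $T_{n-1,n-2f+1}$ past the cyclotomic factors and the $\sum_{w\in\Sym_{\mu}}q^{l(w)}T_w$ of $M_{\mu}$; the error terms land in $\W_{r,n}^{\rhd(f,\lambda)}$ or, after passing to the $f$-basis via Lemma~\ref{fxproduct}, in the span of the $f_{\ss}$ with $\ss\succ\tt$, and hence drop out by orthogonality.

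It then remains to match the surviving scalar with the three factors. The last row of $\mu^{(s)}$ contributes the $q$-sum $\sum_{m=0}^{\mu_k^{(s)}-1}q^{m}T_{|\mu|,|\mu|-m}$ (together with its $\ast$-image) inside $b_{\tt}$; since $f_{\tt^{\mu}}X_j=c_{\tt^{\mu}}(j)f_{\tt^{\mu}}$ and the braid generators of that row act on $f_{\tt^{\mu}}$ by $q$ (Lemma~\ref{equi1}(c)), this collapses to $\sum_{m}q^{2m}=[\mu_k^{(s)}]_{q^2}$. The cyclotomic factors of $M_{\mu}$ involving $X_{|\mu|}$, namely $\prod_{j=s+1}^{r}(X_{|\mu|}-u_j)$, act on the relevant weight space by $\prod_{j=s+1}^{r}\bigl(c_{\mu}(p)^{-1}-u_j\bigr)=\prod_{j=s+1}^{r}\bigl(u_sq^{2(\mu_k^{(s)}-k)}-u_j\bigr)$. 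What is then left is, by the very Definition~\ref{ett} of $E_{\tt\tt}(n-1)$ applied to $f_{\tt}E_{n-1}$, exactly $E_{\tt\tt}(n-1)\,\langle f_{\tt^{\mu}},f_{\tt^{\mu}}\rangle$, which is the claim. (When $s=r$ the same computation applies with the shorter formula for $b_{\tt}$ in \eqref{des-b}, and the cyclotomic product is empty.) Equivalently, one can read the scalar off as the factor by which the invariant form is rescaled under the $\W_{r,n-1}$-module homomorphism $\phi_i$ of Theorem~\ref{cellf} attached to $\tt_{n-1}=\mu$, the computation of that factor being the same.

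The step I expect to be the main obstacle is the second one: carrying $E_{n-1}T_{n-1,n-2f+1}$ and the second copy of $E_{n-1}$ through the full product $M_{\mu}$ while keeping track of which error terms vanish in $\Delta(f,\lambda)$ and which are orthogonal to $f_{\tt}$, and above all recognising that the scalar produced by the ``fold'' of the two $E_{n-1}$'s is precisely $E_{\tt\tt}(n-1)$ rather than some other closed expression. As in \cite[\S6]{RS}, this identification is cleanest once the $q^{2}$-factorial and cyclotomic factors have been peeled off, so that the remaining coefficient is forced to be the diagonal entry of $E_{n-1}$ on $f_{\tt}$. The whole argument is a careful but essentially routine adaptation of the corresponding ``down'' computation in \cite{RS}.
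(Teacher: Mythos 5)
Your opening observations are correct and agree with the paper: the hypotheses force $p=(s,k,\mu_k^{(s)})$ to be the last box of $\bft^\mu$, so $b_{s,k}=|\mu|=n-2f+1$ and the $\m$-recursion gives $\m_{\tt}=E_{n-1}T_{n-1,n-2f+1}\m_{\tt^\mu}$; also $\tt_{n-2}=\lambda=\tt_n$ and $c_{\tt}(n)=u_s^{-1}q^{-2(\mu_k^{(s)}-k)}$. The three-factor shape of the answer is correctly attributed. But the step you yourself flag as the main obstacle is precisely where the argument, as written, has a genuine gap rather than a routine finish.

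Two concrete problems. First, the assertion that ``$f_{\tt}$ is the $F$-projection of $E_{n-1}T_{n-1,n-2f+1}f_{\tt^\mu}$'' is imprecise: one has $f_{\tt}=E_{n-1}T_{n-1,n-2f+1}F_{\tt,n}f_{\tt^\mu}$ with $F_{\tt,n}$ sitting \emph{between} the prefix and $f_{\tt^\mu}$, and the factors $F_{\tt,k}$ for $k<n$ do not commute with $T_{n-1,n-2f+1}$, so the ``projection'' picture needs more care than stated. Second, and more seriously, the claim that after peeling off $[\mu_k^{(s)}]_{q^2}$ and $\prod_{j>s}(c_\mu(p)^{-1}-u_j)$ ``what is then left is, by the very Definition~\ref{ett}, exactly $E_{\tt\tt}(n-1)\langle f_{\tt^\mu},f_{\tt^\mu}\rangle$'' is unjustified. $E_{\tt\tt}(n-1)$ is only one coefficient of $f_{\tt}E_{n-1}=\sum_{\ss\overset{n-1}\sim\tt}E_{\tt\ss}(n-1)f_{\ss}$, and the two $E_{n-1}$'s that meet when you fold $f_{\tt\tt}f_{\tt\tt}$ a priori inject the whole row of off-diagonal entries $E_{\tt\ss}(n-1)$; nothing in your sketch shows the cross terms collapse. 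Identifying Definition~\ref{ett} with the surviving scalar is, at this stage, circular.

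The paper's proof is organised exactly to evade this. It picks the \emph{minimal} tableau $\uu$ with $\uu\overset{n-1}\sim\tt$, writes $f_{\tt}E_{n-1}$ via central elements $\Phi_{\tt},\Psi_{\ell}\in F[X_1^{\pm},\dots,X_{n-2}^{\pm}]\cap Z(\W_{r,n-2})$, and reads off the single off-diagonal entry $E_{\tt\uu}(n-1)=(1+q^2[\lambda_k^{(s)}]_{q^2})E_{\tt\tt}(n-1)\prod_{j>s}(c_{\tt}(n)^{-1}-u_j)$; it then combines $\langle f_{\uu},f_{\uu}\rangle E_{\tt\uu}(n-1)=\langle f_{\tt},f_{\tt}\rangle E_{\uu\tt}(n-1)$ (Lemma~\ref{equi1}(d)) with the identity $E_{\tt\uu}(n-1)E_{\uu\tt}(n-1)=E_{\tt\tt}(n-1)E_{\uu\uu}(n-1)$ extracted from the rational functions $W_k(y,\ss)$ of \cite[4.7]{RX}, feeds in $\langle f_{\uu},f_{\uu}\rangle=E_{\uu\uu}(n-1)\langle f_{\vv},f_{\vv}\rangle$, and closes with Lemma~\ref{ttla} for $\langle f_{\vv},f_{\vv}\rangle/\langle f_{\tt^\mu},f_{\tt^\mu}\rangle$. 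The $W_k$-identity is the ingredient that eliminates the would-be cross terms; it has no counterpart in your proposal. Without it (or an independent argument that your contraction produces no cross terms), the proof does not close at the step you identified as the hard one.
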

\begin{proof} We have
$$\begin{aligned}
&E^{f,n}T_{n-1,n-2f+1}X_{n-2f+1}^kT_{n-2f+1,n-1}F_{\tt,n}F_{\tt,n-1}E_{n-1}\\
\overset{\ref{waff}h}=&E_{n-1}\prod_{i=2}^fE_{n-2i+1,n-2i+3}X_{n-2f+1}^kT_{n-2f+1,n-1}F_{\tt,n}F_{\tt,n-1}E_{n-1}\\
\overset{\ref{waff}j}=&E_{n-1}X_{n-1}^k\prod_{i=2}^fE_{n-2i+1,n-2i+3}T_{n-2f+1,n-1}F_{\tt,n}F_{\tt,n-1}E_{n-1}\\
\overset{\ref{waff}h,i}=&E^{f,n}X_{n}^{-k}F_{\tt,n}F_{\tt,n-1}E_{n-1}.\\
\end{aligned}$$
and
$$\begin{aligned}
&E^{f,n}T_{n-1,n-2f+1}X_{n-2f+1}^kT_{n-2f,n-1}F_{\tt,n}F_{\tt,n-1}E_{n-1}\\
\overset{\ref{waff}h}=&E_{n-1}\prod_{i=2}^fE_{n-2i+1,n-2i+3}X_{n-2f+1}^kT_{n-2f,n-1}F_{\tt,n}F_{\tt,n-1}E_{n-1}\\
\overset{\ref{waff}j}=&E_{n-1}X_{n}^{-k}\prod_{i=f}^2E_{n-2i+2,n-2i}T_{n-2f,n-1}F_{\tt,n}F_{\tt,n-1}E_{n-1}\\
\overset{\ref{waff}b,c}=&E_{n-1}\prod_{i=f}^2E_{n-2i+2,n-2i}T_{n-2f,n-1}X_{n}^{-k}F_{\tt,n}F_{\tt,n-1}E_{n-1}\\
\overset {\ref{waff}h}=&E^{f,n}T_{n-1,n-2f+1}T_{n-2f,n-1}X_{n}^{-k}F_{\tt,n}F_{\tt,n-1}E_{n-1}\\
\overset{\ref{waff}c}=&E^{f,n}T_{n-2f,n-2}T_{n-1,n-2f}X_{n}^{-k}F_{\tt,n}F_{\tt,n-1}E_{n-1}\\
\overset{\ref{waff}b,c}=&E^{f-1,n-2}T_{n-2f,n-2}E_{n-1}T_{n-2}X_n^{-k}F_{\tt,n}F_{\tt,n-1}E_{n-1}T_{n-2,n-2f}\\
\end{aligned}$$
By \cite[4.27a]{RX} and Definition~\ref{waff}j,  we can write
$E_{n-1}T_{n-2}X_n^{-k}F_{\tt,n}F_{\tt,n-1}E_{n-1}$ as an $R$-linear
combination of elements $E_{n-1} g(X_1^\pm, \dots, X_{n-2}^\pm
)X_{n-2}^\ell $ where $g(X_1^\pm, \dots, X_{n-2}^\pm )$ is a
polynomial in variables $X_1^\pm,\dots, X_{n-2}^\pm $, which is in
the center of $\W_{r, n-2}$. Therefore,
$$\begin{aligned}
&E^{f-1,n-2}T_{n-2f,n-2}E_{n-1}X_{n-2}^{\ell}  g(X_1^\pm, \dots, X_{n-2}^\pm )  T_{n-2,n-2f}\\
&=E^{f,n}T_{n-2f,n-2}X_{n-2}^{\ell}T_{n-2,n-2f} g(X_1^\pm, \dots, X_{n-2}^\pm )  \\
&=X_{n-2f}^{\ell}E_{n-1}\prod_{i=f}^2E_{n-2i+2,n-2i}T_{n-2,n-2f} g(X_1^\pm, \dots, X_{n-2}^\pm )\\
&=E^{f,n}X_{n-2f}^{\ell} g(X_1^\pm, \dots, X_{n-2}^\pm ).\\
\end{aligned}$$

Note that $f_\tt =\m_\tt F_\tt$. Here we use  $\m_\tt$ instead of
$\m_\tt\pmod \Wlam$. By (\ref{des-b}), {\small
$$\begin{aligned}
f_{\tt}E_{n-1}=& E_{n-1}T_{n-1,n-2f+1}\m_{\mu}T_{n-2f+1,n-1}b_{\tt_{n-2}}F_{\tt}E_{n-1}\\
=&M_{\lambda}E^{f,n}T_{n-1,n-2f+1}\prod_{j=s+1}^r(X_{n-2f+1}-u_j)\\
& \times \sum_{i=a_{s,k-1}+1}^{n-2f+1}
 q^{n-2f+1-i} T_{n-2f+1,i}
T_{n-2f+1,n-1}F_{\tt,n}F_{\tt,n-1}E_{n-1} b_{\tt_{n-2}}\prod_{k=1}^{n-2}F_{\tt,k}\\
=& M_{\lambda}E^{f,n}T_{n-1,n-2f+1}\prod_{j=s+1}^r(X_{n-2f+1}-u_j)\\
&  (1+ T_{n-2f} \sum_{i=a_{s,k-1}+1}^{n-2f}
 q^{n-2f+1-i} T_{n-2f,i})
T_{n-2f+1,n-1}F_{\tt,n}F_{\tt,n-1}E_{n-1} b_{\tt_{n-2}}\prod_{k=1}^{n-2}F_{\tt,k}.\\
\end{aligned}$$}
By \cite[4.21]{RX} and our two equalities in the beginning of the
proof, we can find $\Phi_{\tt},\Psi_{\ell}\in F[X_1^\pm ,X_2^\pm
,\cdots,X_{n-2}^\pm ]\cap Z(\B_{r,n-2})$, $\ell\in \mathbb Z$ such
that
$$f_\tt E_{n-1}
=E^{f,n}M_{\lambda}(\Phi_{\tt}+\sum_{\ell}X_{n-2f}^{\ell}\Psi_{\ell}\sum_{i=a_{s,k-1}+1}^{n-2f}
q^{n-2f+1-i}T_{n-2f,i}) b_{\tt_{n-2}}\prod_{k=1}^{n-2}F_{\tt,k}.$$
More explicitly,  $\Phi_\tt$ is defined by (\ref{phi}) as follows:
\begin{equation}\label{phi}
E_{n-1}\prod_{j=s+1}^r(X_n^{-1}-u_j)F_{\tt,n-1}F_{\tt,n}E_{n-1}=\Phi_{\tt}E_{n-1}.
\end{equation}

Now, we use \cite[5.8]{RX} and \cite[3.7]{JM:gram} to get
$$f_{\tt}E_{n-1}=E^{f,n}M_{\lambda}b_{\tt_{n-2}}(\Phi_{\tt}+q[\lambda_k^{(s)}]_{q^2}\sum_{\ell}\Psi_{\ell}
c_{\t^{\lambda}}(n-2f)^{\ell})\prod_{k=1}^{n-2}F_{\t,k}.$$

Let $\uu\in\UPD_n(\lambda)$ such that $\uu$ is minimal in the sense
of  $\uu\overset{n-1}\sim\tt$. Then
$\m_{\uu}=E^{f,n}M_{\lambda}b_{\tt_{n-2}}\pmod \Wlam$. Therefore, $
\m_\uu X_k=c_{\tt^\lambda}(k)\m_\uu$ for all $1\le k\le n-2$. We
have
$$ f_\tt
E_{n-1}=(\Phi_{\tt,\lambda}+q[\lambda_k^{(s)}]_{q^2}\Psi_{\tt,\lambda}
)\m_{\uu}$$ where $\Psi_{\tt,\lambda} =\sum_{\ell}
\Psi_{\ell,\lambda} c_{\t^{\lambda}}(n-2f)^{\ell}$ and
$\Phi_{\tt,\lambda}$ and $\Psi_{\ell,\lambda}$ are obtained from
$\Psi_\tt$ and $\Psi_\ell$ by using $c_{\tt^\lambda}(k)$ instead of
$X_k$, $1\le k\le n-2$. By Lemma~\ref{fxproduct}(b) and
Definition~\ref{ett},
\begin{equation}\label{euu}E_{\tt\uu}(n-1)=\Phi_{\tt,\lambda}+q[\lambda_k^{(s)}]_{q^2}\Psi_{\tt,\lambda}.\end{equation}
We compute $\Phi_{\tt,\lambda}$ and   $\Psi_{\tt,\lambda}$ as
follows. By (\ref{phi}),
$$
\begin{aligned}
\Phi_{\tt,\lambda}f_{\tt}E_{n-1}=&f_{\tt}E_{n-1}\prod_{j=s+1}^r(X_n^{-1}-u_j)F_{\tt,n-1}F_{\tt,n}E_{n-1}\\
=&E_{\tt\tt}(n-1)\prod_{j=s+1}^r(c^{-1}_{\tt}(n)-u_j)f_{\tt}E_{n-1}.\\
\end{aligned}$$
When we get the last equation, we use the fact that $f_\ss F_{\tt,
n-1} F_{\tt, n}=0$ for all $\ss\in \UPD_n(\lambda)$ with
$\ss\simn\tt$ and $\ss\neq \tt$, which follows from
Lemma~\ref{fxproduct}(d). So,
\begin{equation}\label{philambda}\Phi_{\tt,\lambda}=E_{\tt\tt}(n-1)\prod_{j=s+1}^r(c^{-1}_{\tt}(n)-u_j).\end{equation}
Similarly, we can verify
\begin{equation}\label{psilambda} \Psi_{\tt,\lambda}=qE_{\tt\tt}(n-1)\prod_{j=s+1}^r(c^{-1}_{\tt}(n)-u_j).\end{equation}
By (\ref{philambda})--(\ref{psilambda}),
$$E_{\tt\uu}(n-1)=(1+q^2[\lambda_k^{(s)}]_{q^2})E_{\tt\tt}(n-1)\prod_{j=s+1}^r(c^{-1}_{\tt}(n)-u_j).$$
On the other hand, by similar  arguments for
$f_{\tt^{\lambda}\uu}f_{\uu\tt^{\lambda}}$ in \cite[6.22]{RS} for
cyclotomic Nazarov-Wenzl algebra, we can verify
$$
f_{\tt^{\lambda}\uu}f_{\uu\tt^{\lambda}}\equiv
E_{\uu\uu}(n-1)\langle f_{\vv},f_{\vv}\rangle
f_{\tt^{\lambda}\tt^{\lambda}}
 \pmod \Wlam, $$
where $\vv=(\uu_1, \uu_2, \cdots, \uu_{n-2})\in
\UPD_{n-2}(\lambda)$. So, $\langle
f_{\uu},f_{\uu}\rangle=E_{\uu\uu}(n-1)\langle
f_{\vv},f_{\vv}\rangle$. Note that

In \cite[4.7]{RX}, Rui and Xu introduced rational functions $W_k(y,
\ss)$ in variable $y$ for any $\ss\in \UPD_n(\lambda)$ such that
$$f_\ss E_{k}\frac{y}{y-X_k} E_k=E_k W_{k}(y, \ss).$$
Suppose that $\ss=\tt$. By comparing the coefficient of $f_\uu$ on
both sides of the above  equality, we have
 $$E_{\tt\uu}(n-1)E_{\uu\tt}(n-1)=E_{\tt\tt}(n-1)E_{\uu\uu}(n-1).$$
Note that $[\mu_k^{(s)}]_{q^2}=1+q^2[\lambda_k^{(s)}]_{q^2}$ and
$c_\tt(n)=u_s^{-1} q^{2(k-\mu_k^{(s)})}$.   Therefore,
$$\begin{aligned} \frac{\langle f_{\tt},f_{\tt}\rangle}{\langle
f_{\tt^{\mu}},f_{\tt^{\mu}}\rangle} =&\frac{E_{\tt\uu}(n-1)\langle
f_{\uu},f_{\uu}\rangle}{E_{\uu\tt}(n-1)\langle
f_{\tt^{\mu}},f_{\tt^{\mu}}\rangle}\\
=&\frac{E^2_{\tt\uu}(n-1)\langle
f_{\uu},f_{\uu}\rangle}{E_{\uu\uu}(n-1)E_{\tt\tt}(n-1)\langle
f_{\tt^{\mu}},f_{\tt^{\mu}}\rangle}=\frac{E^2_{\tt\uu}(n-1)\langle
f_{\vv},f_{\vv}\rangle}{E_{\tt\tt}(n-1)\langle
f_{\tt^{\mu}},f_{\tt^{\mu}}\rangle}\\
=&(1+q^2[\lambda_k^{(s)}]_{q^2})^2E_{\tt\tt}(n-1)\prod_{j=s+1}^r(c^{-1}_{\tt}(n)-u_j)^2
\frac{\langle f_{\vv},f_{\vv}\rangle}{\langle
f_{\tt^{\mu}},f_{\tt^{\mu}}\rangle}\\
\end{aligned}$$ By
Lemma~\ref{ttla}, $\frac{\langle f_{\vv},f_{\vv}\rangle}{\langle
f_{\tt^{\mu}},f_{\tt^{\mu}}\rangle} =
{[\mu_k^{(s)}]_{q^2}^{-1}}{\prod_{j=s+1}^r(u_sq^{2(\mu_k^{(s)}-k)}-u_j)^{-1}}$.
So, $$ \frac{\langle f_{\tt},f_{\tt}\rangle}{\langle
f_{\tt^{\mu}},f_{\tt^{\mu}}\rangle}
=[\mu_k^{(s)}]_{q^2}E_{\tt\tt}(n-1)\prod_{j=s+1}^r(u_sq^{2(\mu_k^{(s)}-k)}-u_j).$$
\end{proof}

\begin{Prop}\label{down2} Suppose that $\lambda=(\lambda^{(1)},\lambda^{(2)},\dots,\lambda^{(s)},\emptyset,
\dots,\emptyset)\in \Lambda_r^+(n-2f)$ and $l(\lambda^{(s)})=l$. Let
$\tt\in{\UPD_n}(\lambda)$ with $(f,\lambda)\in\Lambda_{r,n}^+$ such
that  $\hat{\tt}=\tt^{\mu}$, and $\tt_{n-1}=\tt_n\cup\{p\}$ with
$p=(m,k,\mu_k^{(m)})$ and $(m,k)<(s,l)$. Let
$\mu=[b_1,b_2,\dots,b_r]$. We define $\uu=\tt s_{n,a+1}$ with
$a=2(f-1)+b_{m-1}+\sum_{j=1}^k\mu_{j}^{(m)}$ and
$\vv=(\uu_1,\cdots,\uu_{a+1})$. Then
 \begin{equation}\label{key3}\frac{\langle
f_{\tt},f_{\tt}\rangle}{\langle f_{\tt^{\mu}},f_{\tt^{\mu}}\rangle}=
[\mu_k^{(m)}]_{q^2} E_{\vv\vv}(a)
(u_mq^{-2k}-u_m^{-1}q^{-2(\mu_k^{(m)}-k)})^{-1} A\end{equation}
where $A=   \prod_{j=m+1}^r \frac{(u_mq^{2(\mu_k^{(m)}-k)}-u_j)}{
(u_j-u_m^{-1}q^{-2(\mu_k^{(m)}-k)})} \frac{\prod_{a\in\mathscr
A(\mu)^{<p}}(c_{\mu}(a)-c_{\mu}(p))}{\prod_{b\in\mathscr
R(\mu)^{<p}}(c_{\mu}(b)^{-1}-c_{\mu}(p))}$.
\end{Prop}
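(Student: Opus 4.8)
The plan is to reduce to the ``corner'' case of Proposition~\ref{down1} by sliding the box $p=(m,k,\mu_k^{(m)})$, which is removed at the last step of $\tt$, back to the step at which it is created in $\tt^{\mu}$. Writing $[\mu]=[b_1,\dots,b_r]$, in $\tt$ (whose restriction $\hat\tt$ is $\tt^{\mu}$) the box $p$ is added at step $a=2(f-1)+b_{m-1}+\sum_{j=1}^k\mu_j^{(m)}$, the steps $a+1,\dots,n-1$ add the remaining boxes of $\mu$ in the reading order of $\bft^{\mu}$, and step $n$ removes $p$. Put $\uu=\tt s_{n,a+1}$; this ends the chain $\tt=\uu^{(0)}$, $\uu^{(t)}=\uu^{(t-1)}s_{n-t}$ for $1\le t\le n-1-a$, $\uu^{(n-1-a)}=\uu$, and in $\uu$ the box $p$ is created at step $a$ and removed at step $a+1$, while steps $a+2,\dots,n$ add the boxes that follow $p$. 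Since $\uu$ agrees with $\tt^{\mu}$ through step $a$, the restriction $\vv=(\uu_0,\uu_1,\dots,\uu_{a+1})$ has $\hat\vv=\tt^{\mu'}$, where $\mu'=\tt^{\mu}_a$ is the submultipartition of $\mu$ consisting of its first $m-1$ components together with rows $1,\dots,k$ of its $m$th component, and $\vv_{a+1}=\mu'\setminus\{p\}$.

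First I would treat the chain. The passage from $\uu^{(t-1)}$ to $\uu^{(t)}=\uu^{(t-1)}s_{n-t}$ alters only the $(n-t)$th entry; because $p$ is the last box of its row, the removed node $p$ and the node added at the next step lie in different rows and in different columns, and the latter, following $p$ in the reading order, is lexicographically after $p$. Hence each $\uu^{(t)}$ has the shape $\uu^{(t)}_{n-t-1}\supset\uu^{(t)}_{n-t}\subset\uu^{(t)}_{n-t+1}$ of case~(2) of Lemma~\ref{fxproduct1}, with $\uu^{(t)}_{n-t-1}\ne\uu^{(t)}_{n-t+1}$ and $\uu^{(t-1)}=\uu^{(t)}s_{n-t}\lhd\uu^{(t)}$, so Corollary~\ref{recur1} applies to each link. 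Using that $c_{\uu^{(t)}}(n-t)$ equals the content $c_{\mu}(p)$ of the removed node (so this value bubbles leftward along the chain), writing $r_1,\dots,r_{n-1-a}$ for the boxes of $\mu$ that follow $p$ and $c(r_t)$ for the content $u_sq^{2(j-i)}$ of $r_t=(s,i,j)$, and the identity $(x-y)^2-\delta^2xy=(x-q^2y)(x-q^{-2}y)$, Corollary~\ref{recur1} gives
$$\frac{\langle f_{\tt},f_{\tt}\rangle}{\langle f_{\uu},f_{\uu}\rangle}=\prod_{t=1}^{n-1-a}\frac{\bigl(c(r_t)-q^2c_{\mu}(p)\bigr)\bigl(c(r_t)-q^{-2}c_{\mu}(p)\bigr)}{\bigl(c(r_t)-c_{\mu}(p)\bigr)^2}.$$

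Next I would peel the additions off $\uu$. Applying Proposition~\ref{inductionkey} successively to $(\uu_0,\dots,\uu_j)$ for $j=n,n-1,\dots,a+2$, each last step is an addition and the associated reduced tableau (in the sense of that proposition) restricts through step $j-1$ to the maximal updown tableau of $\uu_{j-1}$, so Proposition~\ref{up1} evaluates its Gram ratio; multiplying these yields $\langle f_{\uu},f_{\uu}\rangle=\langle f_{\vv},f_{\vv}\rangle\prod_{j=a+2}^{n}\Xi_j$, with each $\Xi_j$ in the closed form of Proposition~\ref{up1}. Moreover $\vv$ meets the hypotheses of Proposition~\ref{down1} with $(n,\mu,s)$ there replaced by $(a+1,\mu',m)$ and the same node $p$ --- since $\hat\vv=\tt^{\mu'}$, $\vv_a=\vv_{a+1}\cup\{p\}$, $\mu'^{(j)}=\emptyset$ for $j>m$, and $l(\mu'^{(m)})=k$ --- so that
$$\frac{\langle f_{\vv},f_{\vv}\rangle}{\langle f_{\tt^{\mu'}},f_{\tt^{\mu'}}\rangle}=[\mu_k^{(m)}]_{q^2}\,E_{\vv\vv}(a)\prod_{j=m+1}^r\bigl(u_mq^{2(\mu_k^{(m)}-k)}-u_j\bigr).$$
Combining with Lemma~\ref{ttla}, which evaluates $\langle f_{\tt^{\mu'}},f_{\tt^{\mu'}}\rangle/\langle f_{\tt^{\mu}},f_{\tt^{\mu}}\rangle$ as an explicit quotient of $q$-factorials and linear content-factors, the factorisation
$$\frac{\langle f_{\tt},f_{\tt}\rangle}{\langle f_{\tt^{\mu}},f_{\tt^{\mu}}\rangle}=\frac{\langle f_{\tt},f_{\tt}\rangle}{\langle f_{\uu},f_{\uu}\rangle}\,\Bigl(\prod_{j=a+2}^{n}\Xi_j\Bigr)\,\frac{\langle f_{\vv},f_{\vv}\rangle}{\langle f_{\tt^{\mu'}},f_{\tt^{\mu'}}\rangle}\,\frac{\langle f_{\tt^{\mu'}},f_{\tt^{\mu'}}\rangle}{\langle f_{\tt^{\mu}},f_{\tt^{\mu}}\rangle}$$
turns (\ref{key3}) into a combinatorial identity.

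The main obstacle is precisely that identity: one must show that the content-difference product from the chain, the product of the $\Xi_j$, and the Lemma~\ref{ttla} quotient telescope --- the contributions of boxes sharing a row or a column of $Y(\mu)$ cancelling in pairs, the survivors being organised by the addable and removable nodes of $\mu$ that lie after $p$ --- collapsing to
$$\bigl(u_mq^{-2k}-u_m^{-1}q^{-2(\mu_k^{(m)}-k)}\bigr)^{-1}\prod_{j=m+1}^r\bigl(u_j-u_m^{-1}q^{-2(\mu_k^{(m)}-k)}\bigr)^{-1}\frac{\prod_{a\in\mathscr A(\mu)^{<p}}\bigl(c_{\mu}(a)-c_{\mu}(p)\bigr)}{\prod_{b\in\mathscr R(\mu)^{<p}}\bigl(c_{\mu}(b)^{-1}-c_{\mu}(p)\bigr)},$$
so that multiplying by the Proposition~\ref{down1} factor $[\mu_k^{(m)}]_{q^2}E_{\vv\vv}(a)\prod_{j=m+1}^r(u_mq^{2(\mu_k^{(m)}-k)}-u_j)$ reproduces the right-hand side of (\ref{key3}). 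By contrast, checking that every link of the chain $\tt\to\uu$ falls under Corollary~\ref{recur1} --- which is where the hypotheses $(m,k)<(s,l)$ and ``$p$ is the last box of its row'' enter --- and that $\hat\vv=\tt^{\mu'}$, is routine. Alternatively one can transcribe the argument from the proof of the analogous statement for cyclotomic Nazarov--Wenzl algebras in \cite{RS}.
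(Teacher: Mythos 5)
Your proposal takes essentially the same route as the paper. You define the same chain $\tt\lhd \tt s_{n-1}\lhd\cdots\lhd \tt s_{n,a+1}=\uu$, apply Corollary~\ref{recur1} link by link (the paper's formula~(\ref{ftu})), and then compute $\langle f_\uu,f_\uu\rangle/\langle f_{\tt^\mu},f_{\tt^\mu}\rangle$ before simplifying the content products; the paper compresses your ``peel off additions via Proposition~\ref{inductionkey} and Proposition~\ref{up1}, invoke Proposition~\ref{down1} on $\vv$, and account for the base-change with Lemma~\ref{ttla}'' into the single line ``By Propositions~\ref{up1} and~\ref{down1}, we have (\ref{down3})'', and your final combinatorial collapse is exactly what the paper describes as ``simplifying (\ref{ftu}) via the definition of $c_\uu(j)$ together with (\ref{down3})''.
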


\begin{proof} We have
$\tt\lhd\tt s_{n-1}\lhd\cdots\lhd \tt s_{n,a+1}=\uu$, and
$\vv=(\uu_1,\uu_2,\dots, \uu_{a+1})$. Using Corollary~\ref{recur1}
repeatedly yields \begin{equation}\label{ftu}\langle
f_{\tt},f_{\tt}\rangle=\langle f_{\uu},f_{\uu}\rangle
\prod_{j=a+2}^n(1-\delta^2\frac{c_{\uu}(j)c_{\uu}(a+1)}{(c_{\uu}(j)-c_{\uu}(a+1))^2}).\end{equation}
By Propositions~\ref{up1} and \ref{down1}, we have
\begin{equation}\label{down3}\frac{\langle f_{\uu},f_{\uu}\rangle}{\langle
f_{\tt^{\mu}},f_{\tt^{\mu}}\rangle}=E_{\vv\vv}(a)[\mu_k^{(m)}]_{q^2}\prod_{j=m+1}^r(u_mq^{2(\mu_k^{(m)}-k)}-u_j).\end{equation}
Simplifying (\ref{ftu}) via the definition of  $c_{\uu}(j)$, $a+1\le
j\le n$ together with (\ref{down3}) yields (\ref{key3}), as
required.
\end{proof}

Assume that  $(f, \lambda)\in \Lambda_{r,n}^+$ and $(l, \mu)\in
\Lambda_{r, n-1}^+$. Write
 $(l, \mu)\rightarrow (f, \lambda)$ if either $l=f$ and $\mu$ is obtained from $\lambda$  by
 removing a removable node or $l=f-1$ and $\mu$ is obtained from $\lambda$ by adding an addable node.
Assume that $\W_{r, n}$ is semisimple. By Theorem~\ref{cellf},
\begin{equation}\label{classical}\Delta(f, \lambda)\downarrow \cong \bigoplus_{(l,
\mu)\rightarrow (f, \lambda)} \Delta(l, \mu),\end{equation} where
$\Delta(f, \lambda)\downarrow$ is $\Delta(f, \lambda)$ considered as
$\W_{r, n-1}$-module. We remark that (\ref{classical}) has been
proved in ~\cite{HO:cycBMW} over $\mathbb C$.

Motivated by  \cite{RS:discriminants}, we define
$\gamma_{\lambda/\mu}\in F$  to be the scalar given by
\begin{equation}\label{gam}
\gamma_{\lambda/\mu}= \frac{\langle f_\tt, f_\tt\rangle}{\langle
f_{\tt^\mu}, f_{\tt^\mu}\rangle }  \end{equation}
 where $\tt\in
\UPD_n(\lambda)$ with $\hat \tt=\tt^\mu\in \UPD_{n-1}(\mu)$. By
\cite[5.1]{AMR},
\begin{equation} \label{rankf}\text{rank} \Delta(f, \lambda)=
\frac{r^f n!(2f-1)!!}{(2f)!\prod_{i=1}^r (a_{i}-a_{i-1})!}
\prod_{i=1}^r \frac{a_i!}{\prod_{(k, \ell)\in \lambda^{(i)}}
h_{k,\ell}^{\lambda^{(i)}}}, \end{equation} where $(f, \lambda)\in
\Lambda^+_{r, n}$ and  $[\lambda]=[a_1, a_2, \cdots, a_r]$ and
$h_{k,\ell}^{\lambda^{(i)}}=\lambda^{(i)}_k+{\lambda^{(i)}_{\ell}}'-k-\ell+1$
is the hook length of $(k, l)$ in $\lambda^{(i)}$.

Standard arguments prove the following result (cf. \cite[6.38]{RS}).

\begin{Theorem}\label{main} Let
$\W_{r,n} $ be  over $R$ where  $R=\mathbb Z[u_1^\pm, \dots,
u_r^\pm, q^\pm, \delta^{-1}]$ satisfying  the
assumption~\ref{admiss}. Let $\det G_{f, \lambda}$ be the Gram
determinant associated to the cell module $\Delta(f, \lambda)$ of
$\W_{r,n}$. Then
 \begin{equation}\label{maingram}\det G_{f,
\lambda}=\prod_{(l, \mu)\rightarrow (f, \lambda)} \det G_{l, \mu}
\cdot \gamma_{\lambda/\mu}^{\text{rank} \Delta(l, \mu)}\in
R.\end{equation}
 Furthermore, $\text{rank } \Delta(l, \mu)$ is given by
 ~\ref{rankf} and  each scalar $\gamma_{\lambda/\mu}$ can be
computed explicitly  by Proposition~\ref{up1},
Proposition~\ref{down1} and Proposition~\ref{down2}.
\end{Theorem}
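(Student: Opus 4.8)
The argument follows \cite[6.38]{RS}: once the recursions of Section~4 are in place, computing $\det G_{f, \lambda}$ amounts to reorganising a product of one-dimensional Gram "entries". The plan is to work over the field of fractions $\mathbb F$ of $R=\mathbb Z[u_1^{\pm}, \dots, u_r^{\pm}, q^{\pm}, \delta^{-1}]$. Since $u_1, \dots, u_r, q$ are algebraically independent over $\mathbb Q$, the field $\mathbb F$ satisfies Assumption~\ref{separate} (the implication there being vacuous), so the orthogonal basis $\{f_\tt\}$, Lemma~\ref{fxproduct}, Proposition~\ref{inductionkey} and Propositions~\ref{up1}, \ref{down1}, \ref{down2} are all available over $\mathbb F$. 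On the other hand, the JM-basis $\mathscr M$ of Theorem~\ref{murphynaza} is defined over $R$, so the Gram matrix $\bigl(\langle \m_\ss, \m_\tt\rangle\bigr)_{\ss, \tt\in\UPD_n(\lambda)}$ has entries in $R$; hence $\det G_{f, \lambda}\in R$, and this determinant is unchanged by the base change $R\hookrightarrow\mathbb F$, so it suffices to prove the identity (\ref{maingram}) in $\mathbb F$.

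First I would observe that the Gram matrix of $\Delta(f, \lambda)$ in the orthogonal basis $\{f_\tt\mid \tt\in\UPD_n(\lambda)\}$ is diagonal. This is immediate from Lemma~\ref{fxproduct}(g); alternatively, each $F_\ss$ is a polynomial in $X_1, \dots, X_n$, hence fixed by $\ast$, and by Lemma~\ref{fxproduct}(d) one has $f_\tt F_\ss=\delta_{\ss\tt} f_\tt$, so $\langle f_\ss, f_\tt\rangle=\langle f_\ss F_\ss, f_\tt\rangle=\langle f_\ss, f_\tt F_\ss\rangle=\delta_{\ss\tt}\langle f_\ss, f_\tt\rangle$. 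Combining this with Lemma~\ref{fxproduct}(f) gives
$$\det G_{f, \lambda}=\prod_{\tt\in\UPD_n(\lambda)}\langle f_\tt, f_\tt\rangle,$$
and likewise $\det G_{l, \mu}=\prod_{\ss\in\UPD_{n-1}(\mu)}\langle f_\ss, f_\ss\rangle$ for every $(l, \mu)\in\Lambda^+_{r, n-1}$.

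Next I would partition $\UPD_n(\lambda)$ according to the penultimate shape $\tt_{n-1}$. A size/parity computation shows that $\mu:=\tt_{n-1}$ runs exactly over the multipartitions with $(l, \mu)\rightarrow(f, \lambda)$ (as in the paragraph defining this relation before (\ref{classical}): $l=f$ when $\lambda/\mu$ is a removable node, and $l=f-1$ when it is an addable node), and that for a fixed such $\mu$ the map $\tt\mapsto\hat\tt$ restricts to a bijection from $\{\tt\in\UPD_n(\lambda)\mid\tt_{n-1}=\mu\}$ onto $\UPD_{n-1}(\mu)$, its inverse appending $\lambda$. In particular this fibre has cardinality $|\UPD_{n-1}(\mu)|=\text{rank}\,\Delta(l, \mu)$, which equals (\ref{rankf}) by \cite[5.1]{AMR}. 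For each $\tt$ with $\tt_{n-1}=\mu$, Proposition~\ref{inductionkey} gives $\langle f_\tt, f_\tt\rangle=\langle f_{\hat\tt}, f_{\hat\tt}\rangle\cdot\frac{\langle f_{\tilde\tt}, f_{\tilde\tt}\rangle}{\langle f_{\tt^\mu}, f_{\tt^\mu}\rangle}$; and since $\widehat{\tilde\tt}=\tt^\mu$ by construction, definition (\ref{gam}) identifies the last fraction with $\gamma_{\lambda/\mu}$, a quantity depending only on $(\lambda, \mu)$. Multiplying over all $\tt\in\UPD_n(\lambda)$ and regrouping by $\mu$ then yields
$$\det G_{f, \lambda}=\prod_{\tt\in\UPD_n(\lambda)}\langle f_\tt, f_\tt\rangle=\prod_{(l, \mu)\rightarrow(f, \lambda)}\Bigl(\prod_{\ss\in\UPD_{n-1}(\mu)}\langle f_\ss, f_\ss\rangle\Bigr)\cdot\gamma_{\lambda/\mu}^{\text{rank}\,\Delta(l, \mu)}=\prod_{(l, \mu)\rightarrow(f, \lambda)}\det G_{l, \mu}\cdot\gamma_{\lambda/\mu}^{\text{rank}\,\Delta(l, \mu)},$$
which is (\ref{maingram}); the last sentence of the theorem is then a restatement, with the explicit values of $\text{rank}\,\Delta(l, \mu)$ and $\gamma_{\lambda/\mu}$ supplied by (\ref{rankf}) and Propositions~\ref{up1}, \ref{down1}, \ref{down2}.

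I do not expect a serious obstacle, since all the substantive work has been absorbed into Lemma~\ref{fxproduct}, Proposition~\ref{inductionkey} and Propositions~\ref{up1}--\ref{down2}; what remains is bookkeeping. The points requiring care are: that the $\mu$ occurring as some $\tt_{n-1}$ are precisely those with $(l, \mu)\rightarrow(f, \lambda)$ and that the size computation correctly pins down $l\in\{f, f-1\}$; that the fibre map $\tt\mapsto\hat\tt$ is well defined and surjective onto $\UPD_{n-1}(\mu)$ (this uses exactly the relation $(l,\mu)\rightarrow(f,\lambda)$ to guarantee that appending $\lambda$ to an $(n-1)$-updown $\mu$-tableau is legal); and that $\gamma_{\lambda/\mu}$ defined by (\ref{gam}) is independent of the chosen updown tableau, which holds because $\tilde\tt$ is uniquely determined by $(\lambda, \mu)$. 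Beyond that, the only things to be scrupulous about are the legitimacy of passing to $\mathbb F$ (valid because the parameters of $R$ are indeterminates, so Assumption~\ref{separate} holds over $\mathbb F$) and the diagonalisation of the Gram matrix in the $\{f_\tt\}$-basis, as explained above.
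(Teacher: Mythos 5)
Your argument is correct and is precisely the ``standard argument'' the paper defers to (via \cite[6.38]{RS}): pass to the fraction field where Assumption~\ref{separate} is vacuous, use Lemma~\ref{fxproduct}(d),(f),(g) to diagonalise the Gram matrix in the $\{f_\tt\}$-basis, fibre $\UPD_n(\lambda)$ over $\tt_{n-1}=\mu$, and apply Proposition~\ref{inductionkey} together with~(\ref{gam}) to each fibre. Your reduction to $\mathbb F$, the bijection $\tt\mapsto\hat\tt$ onto $\UPD_{n-1}(\mu)$, and the identification of the correction factor with $\gamma_{\lambda/\mu}$ are all handled correctly, so this is a faithful fill-in of the proof the authors left as a reference.
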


We compute $E_{\ss\ss}(k)$ for any $\ss\in \UPD_n(\lambda)$ and
$1\le k\le n$. In section~4 of \cite{RX}, Rui and Xu have
constructed the seminormal representations $\Delta(\lambda)$ for
$\W_{r, n}$ where $\lambda\in \Lambda_r^+(n-2f)$. More explicitly,
$\Delta(\lambda)$ has a basis $v_\ss$, $\ss\in \UPD_n(\lambda)$. By
standard arguments (cf. \cite[3.16]{RS:discriminants}), one can
verify that $f_\ss$ constructed in the current section is equal to
$v_\ss$ up to a scalar. Therefore, $E_{\ss\ss}(k)$ can be computed
by \cite[4.12-4.13]{RX}. We list such formulae as follows. Let
$\epsilon\in \{-1, 1\}$.

 If $r$ is odd and $\varrho^{-1}=\epsilon \prod_{i=1}^r u_i$,
  then
\begin{equation}\label{ekodd}
    E_{\ss\ss}(k)= \frac{1} {\varrho c_\ss(k)}
 \big(\frac{c_\ss(k)-c_\ss(k)^{-1}}{\delta} +\epsilon \big)
                 \prod_{\alpha}\frac{c_\ss(k)-c(\alpha)^{-1}}{c_\ss(k)-c(\alpha)},
\end{equation} where $\alpha$ run over all addable and removable
nodes of $\ss_{k-1}$ with $\alpha\neq \ss_k\setminus \ss_{k-1}$.

If $r$ is even and $\varrho^{-1}=-\epsilon q^{\epsilon}\prod_{i=1}^r
u_i$
 then
\begin{equation}\label{ekeven}
    E_{\ss\ss}(k)=\frac{1}{\varrho\delta}
    \big(1-\frac{q^{-2\epsilon}}{c_\ss(k)^2})
             \prod_{\alpha}\frac{c_\ss(k)-c(\alpha)^{-1}}{c_\ss(k)-c(\alpha)},
  \end{equation} where $\alpha$ run over all addable and removable
nodes of $\ss_{k-1}$ with $\alpha\neq \ss_k\setminus \ss_{k-1}$.

By Propositions~\ref{inductionkey}, \ref{up1}, ~\ref{down1} and
\ref{down2} together with (\ref{ekodd})-(\ref{ekeven}), we have the
following result immediately.

\begin{Cor}Suppose that $(f, \lambda)\in \Lambda^+_{r, n}$. Let $[\lambda]=[a_1, a_2, \dots, a_r]$ and $\epsilon\in \{-1, 1\}$.
  Then
$$
\langle f_{\tt^\lambda}, f_{\tt^\lambda}\rangle =\frac{[\lambda]
!}{\varrho^f \delta^f} A  \prod_{j=2}^r\prod_{k=1}^{a_{j-1}}
(c_{\bft^\lambda}(k)-u_j) \prod_{j=2}^r (u_1-u_j)^f
(u_1-u_j^{-1})^f, $$ where
$$A=\begin{cases} (u_1^{-1}+q^{-\epsilon})^f (-u_1^{-1}
+q^{\epsilon})^f, &\text{if $2\nmid r$  and $\varrho^{-1}=\epsilon \prod_{i=1}^r u_i $}, \\
(u_1+q^\epsilon)^f (u_1-q^\epsilon)^f u_1^{-2f},
 &\text{if $2\mid r$  and $\varrho^{-1}=\epsilon q^{-\epsilon}\prod_{i=1}^ru_i$}.\\
\end{cases}$$
\end{Cor}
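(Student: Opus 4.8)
The plan is to peel $\tt^{\lambda}$ from the top. Its first $2f$ terms form the maximal updown $\emptyset$--tableau $\tt^{\emptyset}\in\UPD_{2f}(\emptyset)$ attached to $(f,\emptyset)\in\Lambda^+_{r,2f}$, while its remaining terms build up the maximal standard $\lambda$--tableau. So Lemma~\ref{ttla}, applied to the pair $(\lambda,\emptyset)$ at level $f$ — where $[\emptyset]=[0,\dots,0]$, hence $[\emptyset]_{q^2}!=1$ and the denominator products are empty — gives
$$\langle f_{\tt^{\lambda}},f_{\tt^{\lambda}}\rangle=[\lambda]_{q^2}!\ \prod_{j=2}^r\prod_{k=1}^{a_{j-1}}\bigl(c_{\bft^{\lambda}}(k)-u_j\bigr)\cdot G_f,\qquad G_f:=\langle f_{\tt^{\emptyset}},f_{\tt^{\emptyset}}\rangle .$$
It therefore remains to compute $G_f$, which I would do by induction on $f$, the case $f=0$ being trivial since $\Delta(0,\emptyset)$ is the trivial module and $G_0=1$.

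For $f\ge1$ put $\mu=((1),\emptyset,\dots,\emptyset)$. Then $\hat{\tt^{\emptyset}}=\tt^{\mu}\in\UPD_{2f-1}(\mu)$ with $(f-1,\mu)\in\Lambda^+_{r,2f-1}$, and $\tt^{\emptyset}_{2f-1}=\mu=\tt^{\emptyset}_{2f}\cup\{p\}$ with $p=(1,1,1)$, so the hypotheses of Proposition~\ref{down1} hold ($s=k=1$, $\mu^{(j)}=\emptyset$ for $j>1$, $l(\mu^{(1)})=1$); it gives $G_f/\langle f_{\tt^{\mu}},f_{\tt^{\mu}}\rangle=E_{\tt^{\emptyset}\tt^{\emptyset}}(2f-1)\prod_{j=2}^r(u_1-u_j)$. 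A second use of Lemma~\ref{ttla}, now for $(\mu,\emptyset)$ at level $f-1$ (with $c_{\bft^{\mu}}(1)=u_1$), gives $\langle f_{\tt^{\mu}},f_{\tt^{\mu}}\rangle=\prod_{j=2}^r(u_1-u_j)\cdot G_{f-1}$. Combining,
$$G_f=E_{\tt^{\emptyset}\tt^{\emptyset}}(2f-1)\,\Bigl(\prod_{j=2}^r(u_1-u_j)\Bigr)^{2}\,G_{f-1}.$$

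Finally I would evaluate $E_{\tt^{\emptyset}\tt^{\emptyset}}(2f-1)$ from (\ref{ekodd}) (for $2\nmid r$) or (\ref{ekeven}) (for $2\mid r$, after the substitution $\epsilon\mapsto-\epsilon$ that reconciles the normalization of $\varrho$ used there with the one in the statement): here $c_{\tt^{\emptyset}}(2f-1)=u_1$, and the nodes $\alpha$ occurring in those formulas are exactly the addable nodes $(t,1,1)$ of $\emptyset$ with $2\le t\le r$, for which $c(\alpha)=u_t$. Using $q^{\epsilon}-q^{-\epsilon}=\epsilon\delta$ (in the odd case) this reduces to
$$E_{\tt^{\emptyset}\tt^{\emptyset}}(2f-1)=\frac{A_1}{\varrho\delta}\prod_{t=2}^r\frac{u_1-u_t^{-1}}{u_1-u_t},$$
where $A_1=(u_1^{-1}+q^{-\epsilon})(-u_1^{-1}+q^{\epsilon})$ if $2\nmid r$ and $A_1=(u_1+q^{\epsilon})(u_1-q^{\epsilon})u_1^{-2}$ if $2\mid r$, so that $A_1^{f}=A$. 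Hence $E_{\tt^{\emptyset}\tt^{\emptyset}}(2f-1)\prod_{j=2}^r(u_1-u_j)^2=(A_1/\varrho\delta)\prod_{j=2}^r(u_1-u_j)(u_1-u_j^{-1})$, and the induction gives $G_f=(A/\varrho^f\delta^f)\prod_{j=2}^r(u_1-u_j)^f(u_1-u_j^{-1})^f$. Substituting this into the first display (reading $[\lambda]!$ there as $[\lambda]_{q^2}!$) yields the asserted formula.

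No step is a genuine obstacle: the argument is bookkeeping resting on Lemma~\ref{ttla}, Proposition~\ref{down1} and (\ref{ekodd})--(\ref{ekeven}). The points needing care are verifying that $\hat{\tt^{\emptyset}}$ is genuinely the maximal updown $\mu$--tableau and that the structural hypotheses of Proposition~\ref{down1} hold at each stage, reading off the set of nodes $\alpha$ correctly for the one--box shape in (\ref{ekodd})--(\ref{ekeven}), and reconciling the two normalizations of $\epsilon$. One can alternatively run everything through Proposition~\ref{inductionkey} and Proposition~\ref{up1}, telescoping over the individual steps of $\tt^{\lambda}$ and using Proposition~\ref{down1} only for the removable step $(1)\to\emptyset$.
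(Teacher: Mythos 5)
Your argument is correct and fills in what the paper dispatches with a one-line citation. The decomposition $\langle f_{\tt^\lambda},f_{\tt^\lambda}\rangle =\big([\lambda]_{q^2}!\prod_{j,k}(c_{\bft^\lambda}(k)-u_j)\big)\cdot G_f$ via Lemma~\ref{ttla} with $\mu=\emptyset$ at level $f$ is a clean way to collapse the $n-2f$ applications of Proposition~\ref{up1} that the paper implicitly invokes through Proposition~\ref{inductionkey}; the recursion $G_f=E_{\tt^\emptyset\tt^\emptyset}(2f-1)\big(\prod_{j\ge2}(u_1-u_j)\big)^2 G_{f-1}$ from Proposition~\ref{down1} and a second use of Lemma~\ref{ttla} is exactly the other half of that telescope. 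Your bookkeeping checks out: $u_1 A_1 = u_1-u_1^{-1}+q^\epsilon-q^{-\epsilon}$ reconciles $(\ref{ekodd})$ with the odd-$r$ factor $(u_1^{-1}+q^{-\epsilon})(-u_1^{-1}+q^\epsilon)$, and the sign flip $\epsilon\mapsto-\epsilon$ converts $(\ref{ekeven})$'s convention $\varrho^{-1}=-\epsilon q^{\epsilon}\prod u_i$ into the Corollary's $\varrho^{-1}=\epsilon q^{-\epsilon}\prod u_i$, turning $1-q^{-2\epsilon}/u_1^2$ into $1-q^{2\epsilon}/u_1^2=(u_1+q^\epsilon)(u_1-q^\epsilon)u_1^{-2}$ as needed. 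You also correctly identify the set of nodes $\alpha$ for the empty shape: the $r-1$ addable nodes $(t,1,1)$, $t\ge 2$, with $c(\alpha)=u_t$. Apart from the typographical point you flag (the Corollary's $[\lambda]!$ should read $[\lambda]_{q^2}!$), this is a complete proof taking essentially the same route the paper gestures at.
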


Given an multi-partition of $\lambda$. We denote $\mu$ by
$\lambda\cup p$ ( resp. $\lambda/p$) if $Y(\mu)$ is obtained from
$\lambda$ by adding (resp. removing ) the addable (resp. removable)
node $p$. Let $p=(i, j, k)$ be the node which is in the jth-row, kth
column of ith component of $Y(\lambda)$. We define $p^+=(i, j, k+1)$
and $p^-=(i, j+1, k)$.

 In the remainder of this section, we assume that
$$R=\Z[u_1^{\pm},u_2^{\pm},\cdots,u_r^{\pm},q^{\pm 1}, \delta^{-1}
]$$ such that the assumption~\ref{admiss} holds. Let $R_1$ be the
multiplicative sub-semigroup of $R$ generated by $1$, $u_i^\pm,
q^\pm,  \delta^\pm$  and $u_iu_j^{-1}-q^{2d}$ for integers $i, j, d$
with $|d|<n$ and $ 1\leq i ,j\leq r$. Let $F_1$ be the field of
fraction of $R_1$.

\begin{Theorem}\label{line1111} Suppose $\lambda \in \Lambda_r^+ (n-2)$.
Let $r_{\lambda, p, \tilde p}=\dim\Delta(0, \lambda\cup p\cup \tilde
p)$ if $\lambda\cup p\cup \tilde p$ is an multipartition. If
$2\nmid r$ and $\varrho^{-1}=\epsilon \prod_{i=1}^r u_i$, we define
$$B=\prod_{\lambda\cup p\cup p^+ \in
\Lambda_r^+(n)} (c_\lambda(p)-\epsilon q^{-1})^{r_{\lambda, p,
p^+}}\prod_{\lambda\cup p\cup p^- \in
\Lambda_r^+(n)}(c_\lambda(p)+\epsilon q)^{r_{\lambda, p, p^-}}.$$
Otherwise, we define
$$B=\begin{cases} \prod_{\lambda\cup p\cup p^- \in \Lambda_r^+(n)}
(c_\lambda(p)^2-q^2)^{r_{\lambda, p, p^-}},
& \text{if $2\mid r, \varrho^{-1}=q^{-1}\prod_{i=1}^r u_i$,}\\
\prod_{\lambda\cup p\cup p^+ \in \Lambda_r^+(n)}
(c_\lambda(p)^2-q^{-2})^{r_{\lambda, p, p^+}}, & \text{if $2\mid r,
\varrho^{-1}=-q \prod_{i=1}^r u_i$.}
\end{cases}$$
Then there is an $A\in R_1$ such that \begin{equation}
\label{line1}\det G_{1,\lambda}= A B \prod_{\substack{p,\tilde p\in
\mathscr A(\lambda) }}
 (c_\lambda(p)c_\lambda(\tilde p)-1)^{\dim \Delta(0, \lambda \cup p \cup \tilde p)}.\end{equation} \end{Theorem}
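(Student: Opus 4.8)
The plan is to argue by induction on $n$, with Theorem~\ref{main} as the engine. For $(1,\lambda)\in\Lambda^+_{r,n}$ the pairs $(l,\mu)\rightarrow(1,\lambda)$ are $(1,\lambda/p)$ for $p\in\mathscr R(\lambda)$ and $(0,\lambda\cup p)$ for $p\in\mathscr A(\lambda)$, so Theorem~\ref{main} gives
$$\det G_{1,\lambda}=\prod_{p\in\mathscr R(\lambda)}\det G_{1,\lambda/p}\cdot\gamma_{\lambda/(\lambda/p)}^{\dim\Delta(1,\lambda/p)}\ \cdot\ \prod_{p\in\mathscr A(\lambda)}\det G_{0,\lambda\cup p}\cdot\gamma_{\lambda/(\lambda\cup p)}^{\dim\Delta(0,\lambda\cup p)}.$$
In the base case $n=2$ one has $\lambda=\emptyset$, $\mathscr R(\lambda)=\emptyset$, and the right hand side is a product of the rank--one Ariki--Koike Gram determinants $\det G_{0,p}$ over the one--box multipartitions together with the scalars $\gamma_{\emptyset/p}$ of Proposition~\ref{down1}; evaluating these directly (using $c_\emptyset((s,1,1))=u_s$) gives~(\ref{line1}).

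For the inductive step I would split the factors on the right into those that belong to $R_1$ and those that do not. The Ariki--Koike Gram determinants $\det G_{0,\lambda\cup p}$ --- cell modules of $\H_{r,n-1}$ --- are, by the formula of James and Mathas~\cite{JM:gram}, products of quantum integers $[h]_{q^2}$ with $h<n$ and of factors $u_iu_j^{-1}q^{2d}-1$ with $|d|<n$, hence lie in $R_1$. By Proposition~\ref{up1}, $\gamma_{\lambda/(\lambda/p)}$ (with $p$ removable) is a monomial in $q$ and the $u_m$ times a ratio of differences $c_\lambda(a)-c_\lambda(p)^{-1}$ and $c_\lambda(a)^{-1}-c_\lambda(p)^{-1}$; since $c_\lambda(p)^{-1}$ is then a monomial $u_mq^{2d}$, each such difference is, up to a monomial, of the form $u_iu_j^{-1}q^{2d}-1$ and again lies in $R_1$. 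The same is true of the ``content'' part $\prod_\alpha\frac{c-c(\alpha)^{-1}}{c-c(\alpha)}$ of the $E$--coefficients in~(\ref{ekodd})--(\ref{ekeven}) and of the denominator of the factor $A$ in Proposition~\ref{down2}. All of these are absorbed into the ``$A$'' of~(\ref{line1}).

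The two families of factors not in $R_1$ arise as follows. From the ``main'' part of the $E$--coefficient in Propositions~\ref{down1} and~\ref{down2}, evaluated at the content $c_\lambda(p)$, one gets --- by a direct check from~(\ref{ekodd})--(\ref{ekeven}) --- precisely the linear factors $(c_\lambda(p)-\epsilon q^{-1})$ and $(c_\lambda(p)+\epsilon q)$ when $2\nmid r$, and the quadratic factor $(c_\lambda(p)^2-q^{-2\epsilon})$ when $2\mid r$; these are the factors occurring in $B$. From the numerator $\prod_{a}(c_{\lambda\cup p}(a)-c_{\lambda\cup p}(p))$ inside Proposition~\ref{down2}, using $c_{\lambda\cup p}(p)=c_\lambda(p)^{-1}$: when $a$ is still an addable node of $\lambda$ this yields, up to a monomial, the factor $c_\lambda(a)c_\lambda(p)-1$ --- a product, not a ratio, of two of the $u_i$, so genuinely outside $R_1$ --- while when $a=p^{+}$ or $a=p^{-}$ is one of the nodes made addable by adjoining $p$ (equivalently, $\lambda\cup p\cup p^{\pm}\in\Lambda_r^+(n)$) it yields another $B$--type factor $(c_\lambda(p)^2-q^{\pm2})$.

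The crux --- and the step I expect to be the main obstacle --- is the bookkeeping of exponents: one must multiply all of the above, substitute the inductive formula for each $\det G_{1,\lambda/p}$, and check that the exponent of every factor $c_\lambda(p)c_\lambda(\tilde p)-1$ (over ordered pairs $p,\tilde p\in\mathscr A(\lambda)$) collapses to $\dim\Delta(0,\lambda\cup p\cup\tilde p)$, that the exponent of $(c_\lambda(p)-\epsilon q^{-1})$ becomes $r_{\lambda,p,p^+}=\dim\Delta(0,\lambda\cup p\cup p^+)$ and that of $(c_\lambda(p)+\epsilon q)$ becomes $r_{\lambda,p,p^-}$ (and analogously for the $2\mid r$ factors), while every remaining piece is either in $R_1$ or cancels telescopically. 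This rests on the branching identities for cell--module dimensions --- the hook--length formula~(\ref{rankf}) and the branching rule~(\ref{classical}) --- on the combinatorics of how the addable and removable nodes of $\lambda$ change when a single box is added or removed, and on a symmetrisation between the $\gamma_{\lambda/(\lambda\cup p)}$ and $\gamma_{\lambda/(\lambda\cup\tilde p)}$ contributions. This runs exactly parallel to the proof of the corresponding statement for cyclotomic Nazarov--Wenzl algebras in~\cite{RS:discriminants}, so I would follow that argument step by step, with the $q$--analogues of the degenerate content formulas throughout.
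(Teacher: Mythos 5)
Your proposal follows essentially the same route as the paper's proof: induction on $n$ driven by Theorem~\ref{main}, the same splitting of the resulting product into Ariki--Koike Gram determinants and the $\gamma$-scalars of Propositions~\ref{up1}, \ref{down1}, \ref{down2}, the same identification of the $B$-type and $(c_\lambda(p)c_\lambda(\tilde p)-1)$-type factors coming from the $E$-coefficients and the content products in Proposition~\ref{down2}, and the same appeal to branching rules (the paper's identities~(\ref{t1})--(\ref{t2})) to collapse the exponents. The only cosmetic divergence is that you invoke the James--Mathas Gram determinant formula from~\cite{JM:gram} to see $\det G_{0,\lambda\cup p}\in R_1$, whereas the paper attributes this to Proposition~\ref{up1}; both routes are valid and lead to the same decomposition.
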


\begin{proof} Suppose that there are
 $s$ (resp. $m-s$)  addable (resp. removable) nodes $p_1, p_2, \cdots, p_s$
 (resp. $p_{s+1}, p_{s+2}, \cdots, p_m$) in $Y(\lambda)$.
 Let $$\mu[i]=\begin{cases} \lambda\cup p_i, & \text{ if $1\le i\le s$,}\\
 \lambda/ p_i, & \text{ if $s+1\le i\le m$}.\\
 \end{cases}$$

We need (\ref{t1})--(\ref{t2}) which can be verified directly.
Suppose $s+1\le k\le m$.
\begin{equation}\label{t1}
\begin{aligned} & \{(p, \tilde p )\mid p, \tilde p\in \mathscr A(\mu[k]), p\neq \tilde p \}\\ = & \{(p,
\tilde p)\mid p, \tilde p\in \mathscr A(\mu[k])\cap \mathscr
A(\lambda), p\neq
\tilde p\}\cup \{(p, p_k)\mid p\in \mathscr A(\mu[k] )\}\\
\end{aligned} \end{equation}
and
 \begin{equation}\label {t2} \begin{aligned} & \{(p_i,
p_k)\mid s+1\le k\le m, 1\le i\le s\}\\ = &\cup_{k={s+1}}^m \{(p,
p_k)\mid p\in \mathscr A(\mu[k] )\} \cup \cup_{k=s+1}^m \{(p_k,
p_k^+), (p_k,  p_k^{-})\}.\end{aligned}\end{equation}

Now, we prove the result by induction on $n$. It is  routine to
check (\ref{line1}) for the case  $n=2$. Suppose $n\geq 3$. By
Theorem~\ref{main},
\begin{equation}\label{1}\det G_{1, \lambda}=\prod_{i=1}^s \det
G_{0, \mu[i]}\cdot \gamma_{\lambda/\mu[i]}^{\dim \Delta(0,
\mu[i])}\prod_{j=s+1}^m \det G_{1, \mu[j]}\cdot
\gamma_{\lambda/\mu[j]}^{\dim \Delta(1, \mu[j])}\end{equation}

 By
Proposition~\ref{up1},  $\det G_{0, \mu[i]}\in R_1$ and
$\gamma_{\lambda/\mu[j]}\in F_1$  for $1\le i\le s$ and  $s+1\le
j\le m$. Suppose $1\le i\le s$. By
Propositions~\ref{down1},~\ref{down2},
\begin{equation}\label{r}\gamma_{\lambda/\mu[i]}= C D
\frac{\prod_{1\le j\neq i\le
s}(c_\lambda(p_i)c_\lambda(p_j)-1)}{\prod_{s+1\le k\le
m}(c_\lambda(p_i)-c_\lambda(p_k))} £¬\end{equation} where  $C\in
F_1$ and
 $$D=\begin{cases} (c_\lambda(p_i)+\epsilon q )(c_\lambda(p_i)-\epsilon q^{-1}),
 & \text{if~$2\nmid r, \varrho^{-1}=\epsilon \prod_{i=1}^r u_i$,}\\
c_\lambda(p_i)^2-q^{2\epsilon}, & \text{if $2\mid r,
\varrho^{-1}=\epsilon q^{-\epsilon}\prod_{i=1}^r u_i.$}
\end{cases}$$

By induction assumption, $\det G_{1,\mu[j]}$ can be computed by
(\ref{line1}) if $s+1\le j\le m$. We rewrite the terms on the right
hand side of (\ref{1}) so as to get $(c_\lambda(p)c_\lambda(\tilde
p)-1)^{r_{\lambda, p, \tilde p}}$ in $\det G_{1,\lambda}$. In fact,
this follows from (\ref{t1}) and the classical branching rule for
$\Delta(0, \lambda \cup p \cup \tilde p)$. Now, (\ref{line1})
follows from similar computation together with
(\ref{t1})-(\ref{t2}).
\end{proof}

\section{Induction and Restriction}
In this section, we consider $\W_{r, n}$ over a field $F$.

Let $\W_{r, n}$-mod be the category of right $\W_{r, n}$-modules. We
define two functors
$$\F_n: \W_{r, n}\text{-mod}\rightarrow \W_{r, n-2}\text{-mod},
\text{ and } \G_{n-2}: \W_{r, n-2}\text{mod} \rightarrow \W_{r,
n}\text{-mod}$$ such that
$$ \mathcal F_n(M) = M E_{n-1}  \text{ and } \mathcal G_{n-2}(N)=N _{\W_{r, n-2}}\otimes  E_{n-1} \W_{r,
n},$$ for all right $\W_{r, n}$-modules $M$ and right $\W_{r,
n-2}$-modules $N$. By Lemma~\ref{ewe}, $\F_n$ and $\G_{n-2}$ are
well-defined. For the simplification of notation, we will omit the
subscripts of $\mathcal F_n$ and $\mathcal G_{n-2}$ later on.

\begin{Lemma} \label{fgfunctor} Suppose that $(f, \lambda)\in \Lambda_{r, n}^+$
and $(\ell, \mu)\in \Lambda_{r, n+2}^+$.
\begin{enumerate}
\item $\F\G=1$.
 \item $\G(\Delta(f, \lambda) )=\Delta(f+1,
\lambda)$.
\item $\F(\Delta(f, \lambda ))=\Delta(f-1, \lambda)$.
\item As right $\W_{r, n}$-modules,
$\text{Hom}_{\W_{r, n+2}} ( E_{n+1}\W_{r, n+2} , \Delta(\ell, \mu))
\cong \Delta(\ell , \mu) E_{n+1}$.
\item $\text{Hom}_{\W_{r, n+2} } (\G(\Delta(f, \lambda)), \Delta(l,
\mu))\cong \text{Hom}_{\W_{r, n} } (\Delta(f, \lambda), \F(\Delta(l,
\mu))$ as $F$-modules.
\end{enumerate}
\end{Lemma}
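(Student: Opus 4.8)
The plan is to establish (a) by a direct computation with Lemma~\ref{ewe}, then (d) by an explicit evaluation map, then (e) as a formal consequence of tensor--hom adjunction, and finally (c) and (b) by straightening arguments parallel to \cite{RS}, with the rank formula~\ref{rankf} promoting the homomorphisms so obtained to isomorphisms. For (a): $\F\G(N)=N\otimes_{\W_{r,n-2}}E_{n-1}\W_{r,n}E_{n-1}$, which by Lemma~\ref{ewe} equals $N\otimes_{\W_{r,n-2}}E_{n-1}\W_{r,n-2}$. Solving the Kauffman skein relation in Definition~\ref{waff} shows $E_{n-1}$ is a polynomial in $T_{n-1}$, hence commutes with every generator $T_i,E_i$ ($1\le i\le n-3$) and $X_j$ ($1\le j\le n-2$) of $\W_{r,n-2}$; thus $h\mapsto E_{n-1}h$ is a $(\W_{r,n-2},\W_{r,n-2})$--bimodule map $\W_{r,n-2}\to E_{n-1}\W_{r,n-2}$, onto by construction and injective because, by Theorem~\ref{W cellular}, the elements $E_{n-1}h$ with $h$ in a cellular basis of $\W_{r,n-2}$ occur among the cellular basis of $\W_{r,n}$. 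So $E_{n-1}\W_{r,n-2}\cong\W_{r,n-2}$ as bimodules and $\F\G(N)\cong N$.

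For (d): as in (a), $E_{n+1}$ commutes with the subalgebra $\W_{r,n}$ of $\W_{r,n+2}$, so $E_{n+1}\W_{r,n+2}$ is a $(\W_{r,n},\W_{r,n+2})$--bimodule and both sides of (d) are right $\W_{r,n}$--modules. By Lemma~\ref{ewe}, $E_{n+1}\W_{r,n+2}E_{n+1}=E_{n+1}\W_{r,n}$, so we may fix $a\in\W_{r,n+2}$ with $E_{n+1}aE_{n+1}=E_{n+1}$. Put $\Theta(\phi)=\phi(E_{n+1})$ for $\phi\in\Hom_{\W_{r,n+2}}(E_{n+1}\W_{r,n+2},\Delta(\ell,\mu))$. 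Then $\phi(E_{n+1})=\phi\bigl(E_{n+1}\cdot(aE_{n+1})\bigr)=\phi(E_{n+1})\,aE_{n+1}\in\Delta(\ell,\mu)E_{n+1}$, so $\Theta$ has the claimed target; $\Theta$ is injective because $\phi(E_{n+1})=0$ forces $\phi(E_{n+1}x)=\phi(E_{n+1})x=0$ for all $x$; it is onto because every $m\in\Delta(\ell,\mu)$ gives the right $\W_{r,n+2}$--homomorphism $E_{n+1}x\mapsto mE_{n+1}x$, with $\Theta$--value $mE_{n+1}$; and it is right $\W_{r,n}$--linear since $\Theta(\phi\cdot g)=\phi(gE_{n+1})=\phi(E_{n+1}g)=\phi(E_{n+1})g$ for $g\in\W_{r,n}$. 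This proves (d). Part (e) then follows from the tensor--hom adjunction
$$\Hom_{\W_{r,n+2}}(X\otimes_{\W_{r,n}}E_{n+1}\W_{r,n+2},\,Y)\cong\Hom_{\W_{r,n}}\bigl(X,\,\Hom_{\W_{r,n+2}}(E_{n+1}\W_{r,n+2},\,Y)\bigr)$$
with $X=\Delta(f,\lambda)$ and $Y=\Delta(l,\mu)$, together with (d), which identifies the inner $\Hom$ with $\F(\Delta(l,\mu))$ as right $\W_{r,n}$--modules.

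For (c) and (b): here one uses the factorisations $E^{f,n}=E_{n-1}E^{f-1,n-2}$ (for $f\ge1$) and $E^{f+1,n+2}=E_{n+1}E^{f,n}$, in which $E^{f-1,n-2}M_\lambda$ (resp. $E^{f,n}M_\lambda$) involves only strands $<n-1$ (resp. $<n+1$) and so commutes with $E_{n-1}$ (resp. $E_{n+1}$). For (c), identify $\Delta(f,\lambda)$ with $\Delta_{\t^\lambda}(f,\lambda)$ as in Proposition~\ref{key}, so $\F(\Delta(f,\lambda))=\Delta(f,\lambda)E_{n-1}$; when $f=0$ this is $0$ (since $E_{n-1}\in\Ef_n$), agreeing with the convention $\Delta(-1,\lambda):=0$. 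For $f\ge1$, carrying $E_{n-1}$ to the left through a basis element $E^{f,n}M_\lambda T_dX^{\kappa_d}$ by Lemma~\ref{rightkey}(a) and collapsing it against the leading factor of $E^{f,n}=E_{n-1}E^{f-1,n-2}$ by Lemma~\ref{ewe} and Corollary~\ref{ewef} identifies $\Delta(f,\lambda)E_{n-1}$, modulo $\W_{r,n-2}^{\rhd(f-1,\lambda)}$, with the $\W_{r,n-2}$--span of $\{E^{f-1,n-2}M_\lambda T_eX^{\kappa_e}\mid e\in\Dcal_{f-1,n-2},\ \kappa_e\in\N_r^{f-1,n-2}\}$, i.e. the defining spanning set of the cell module $\Delta(f-1,\lambda)$ of $\W_{r,n-2}$ (which is meaningful since $\lambda\in\Lambda_r^+(n-2f)=\Lambda_r^+((n-2)-2(f-1))$); comparing ranks via~\ref{rankf} makes the resulting $\W_{r,n-2}$--homomorphism an isomorphism. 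For (b), the identity $E^{f+1,n+2}M_\lambda=E^{f,n}M_\lambda\,E_{n+1}$ yields a natural surjective right $\W_{r,n+2}$--homomorphism $\G(\Delta(f,\lambda))\twoheadrightarrow\Delta(f+1,\lambda)$ (equivalently, the morphism corresponding under the adjunction of (d)--(e) to the isomorphism $\Delta(f,\lambda)\cong\F(\Delta(f+1,\lambda))$ coming from (c)), and a rank count as in \cite{RS} shows it is injective. Conceptually, over the field of fractions of $R_0=\Z[u_1^{\pm},\dots,u_r^{\pm},q^{\pm},(q-q^{-1})^{-1}]$ --- over which all $\W_{r,m}$ are split semisimple, as in the proof of Proposition~\ref{murphybasis} --- parts (a) and (d)--(e) make $\G$ fully faithful, whence $\G$ sends the simple module $\Delta(f,\lambda)$ to a module with endomorphism ring $F$, necessarily the simple $\Delta(f+1,\lambda)$ by (c).

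The main obstacle is part (b), i.e. showing the above surjection is injective. Whether one argues by the rank/straightening bookkeeping of \cite{RS} or by the semisimple reduction just indicated (descending the isomorphism over $\mathrm{Frac}(R_0)$ to the base $F$), one must keep careful track of the two-sided ideals $\W_{r,m}^{\rhd(\cdot,\cdot)}$ and of compatibility with scalar extension; these checks are routine but lengthy and parallel \cite{RS} once Lemma~\ref{ewe}, Theorem~\ref{cellf} and Proposition~\ref{murphybasis} replace their analogues there. We leave the details to the reader.
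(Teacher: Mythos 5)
Your proposal is correct and follows essentially the same strategy as the paper: (a) from Lemma~\ref{ewe}, (d) by the evaluation map $\phi\mapsto\phi(E_{n+1})$, (e) by tensor--hom adjunction, and (b) via the explicit surjection $\Delta(f,\lambda)\otimes E_{n+1}\W_{r,n+2}\twoheadrightarrow\Delta(f+1,\lambda)$ promoted to an isomorphism by a rank count. The only organizational difference is that you prove (c) directly by a straightening argument and then recover (b), whereas the paper proves (b) first and deduces (c) formally from (a) and (b), which is marginally cleaner since it avoids the separate bookkeeping you sketch for $\Delta(f,\lambda)E_{n-1}$.
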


\begin{proof} (a) follows from Lemma~\ref{ewe}, immediately.
 By standard arguments, we define
$\psi: \Delta(f, \lambda) \otimes E_{n+1} \W_{r, n+2}\rightarrow
\Delta(f+1, \lambda)$ such that $$\psi(({E^{f, n} M_\lambda}+\W_{r,
n}^{\rhd(f, \lambda)}) \otimes E_{n+1} h)=E^{f+1, n+2} M_\lambda
 h+\W_{r, n+2}^{\rhd(f+1, \lambda)}$$ for  $h\in \W_{r, n+2} $.
 Since  $E^{f+1, n+2}
 M_\lambda$ generates $\Delta(f+1, \lambda)$ as $\W_{r, n+2}$-module,  $\psi$ is an epimorphism.
Note that $E^{f, n} =E^{f, n} E^{f, n-1} E^{f, n}$. We have
$$\Delta(f, \lambda) \otimes E_{n+1} \W_{r, n+2}=(M_\lambda E^{f, n}
E^{f, n-1}+\W_{r, n}^{\rhd(f, \lambda)}) \otimes E^{f+1, n+2} \W_{r,
n+2}.$$ By Lemma~\ref{Yu2.7}, $E^{f+1, n+2} \W_{r, n+2}$ can be
written as $F$-linear combination of elements in $\W_{r, n-2f}
E^{f+1, n+2} T_d X^{\kappa_d}$ where $d\in \Dcal_{f+1, n+2}$ and
$\kappa_d\in \mathbb N_r^{f+1, n+2}$. By \cite[5.8]{RX},  {\small
$$( M_\lambda E^{f, n} E^{f, n-1}+\W_{r, n}^{\rhd(f, \lambda)})
\otimes \W_{r, n-2f} E^{f+1, n+2}=(M_\lambda E^{f, n} \H_{r, n-2f}
+\W_{r, n}^{\rhd(f, \lambda)}) \otimes E_{n+1}.$$}
 Therefore, $\dim_F
(\Delta(f, \lambda) \otimes E_{n+1} \W_{r, n+2})\leq \dim_F
\Delta(f+1, \lambda)$. So, $\psi$ is injective. This completes the
proof of (b). (c) follows from (a)-(b), immediately.

We define the $F$-linear map $\phi: \text{Hom}_{\W_{r, n+2}} (
E_{n+1}\W_{r, n+2},  \Delta(\ell , \mu))\rightarrow  \Delta(\ell,
\mu) E_{n+1}$ such that $\phi(f)=f(E_{n+1})$, for $f\in
\text{Hom}_{\W_{r, n+2}} ( E_{n+1}\W_{r, n+2},  \Delta(\ell ,
\mu))$. Note that $f(E_{n+1})\in \Delta(\ell, \mu) E_{n+1}$. So,
$\phi$ is an epimorphism. Note that any $f\in \text{Hom}_{\W_{r,
n+2}} ( E_{n+1}\W_{r, n+2},  \Delta(\ell , \mu))$ is determined
uniquely by $f(E_{n+1})$. So, $\phi$ is injective. This proves (d).
Finally,
 (e) follows
from adjoint associativity and (d).
\end{proof}

Given two $\W_{r, n}$-modules $M, N$.  Let $\langle M,
N\rangle_n=\dim_F \text{Hom}_{\W_{r, n}}(M, N)$. By
Lemma~\ref{fgfunctor}(e), we have the following result immediately.

 \begin{Theorem}\label{cons} Given $(f, \lambda)\in \Lambda_{r,
n+2}^+$ and $(\ell, \mu)\in \Lambda_{r, n+2}^+$ with $f\ge 1$. Then
 $\langle \Delta(f, \lambda),
\Delta(\ell, \mu)\rangle_{n+2} = \langle \Delta(f-1, \lambda),
\Delta(\ell-1, \mu)\rangle_n $.
 \end{Theorem}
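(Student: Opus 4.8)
The plan is to obtain the identity as an immediate consequence of Lemma~\ref{fgfunctor}; the only idea required is to realise $\Delta(f,\lambda)$ as $\G$ applied to a cell module of $\W_{r,n}$ and then to use the adjunction between $\G$ and $\F$ recorded in part~(e) of that lemma.

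First I would check that, since $f\ge 1$, the pair $(f-1,\lambda)$ lies in $\Lambda^+_{r,n}$: indeed $\lambda\in\Lambda^+_r((n+2)-2f)=\Lambda^+_r(n-2(f-1))$, we have $f-1\ge 0$, and $f-1\le\lfloor n/2\rfloor$ because $f\le\lfloor (n+2)/2\rfloor=\lfloor n/2\rfloor+1$. Applying Lemma~\ref{fgfunctor}(b) to the instance of $\G$ from $\W_{r,n}$-mod to $\W_{r,n+2}$-mod gives
$$\G\big(\Delta(f-1,\lambda)\big)=\Delta(f,\lambda)$$
as right $\W_{r,n+2}$-modules. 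Next I would invoke Lemma~\ref{fgfunctor}(e) with the cell module $\Delta(f-1,\lambda)$ of $\W_{r,n}$ playing the role of ``$\Delta(f,\lambda)$'' there and $\Delta(\ell,\mu)$ of $\W_{r,n+2}$ playing the role of ``$\Delta(\ell,\mu)$'', which yields an $F$-linear isomorphism
$$\Hom_{\W_{r,n+2}}\big(\G(\Delta(f-1,\lambda)),\Delta(\ell,\mu)\big)\;\cong\;\Hom_{\W_{r,n}}\big(\Delta(f-1,\lambda),\F(\Delta(\ell,\mu))\big).$$
Substituting $\G(\Delta(f-1,\lambda))=\Delta(f,\lambda)$ on the left and $\F(\Delta(\ell,\mu))=\Delta(\ell-1,\mu)$ (Lemma~\ref{fgfunctor}(c), for the instance of $\F$ from $\W_{r,n+2}$-mod to $\W_{r,n}$-mod) on the right, we get
$$\Hom_{\W_{r,n+2}}\big(\Delta(f,\lambda),\Delta(\ell,\mu)\big)\;\cong\;\Hom_{\W_{r,n}}\big(\Delta(f-1,\lambda),\Delta(\ell-1,\mu)\big)$$
as $F$-vector spaces, and passing to $F$-dimensions gives $\langle\Delta(f,\lambda),\Delta(\ell,\mu)\rangle_{n+2}=\langle\Delta(f-1,\lambda),\Delta(\ell-1,\mu)\rangle_n$.

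I do not expect a genuine obstacle here beyond bookkeeping: the proof is a short chain of functorial isomorphisms, and the only things to watch are that the three parts of Lemma~\ref{fgfunctor} are applied at the correct levels ($n$ for~(b), $n+2$ for~(c), and the pair $(n,n+2)$ for~(e)), and that $(f-1,\lambda)$ genuinely labels a cell module of $\W_{r,n}$, which is precisely where the hypothesis $f\ge 1$ enters. The only degenerate case is $\ell=0$, where one reads $\F(\Delta(0,\mu))=\Delta(0,\mu)E_{n+1}=0$ and $\Delta(-1,\mu)=0$ (the convention already implicit in Lemma~\ref{fgfunctor}(c)); with these conventions the same argument gives $\langle\Delta(f,\lambda),\Delta(0,\mu)\rangle_{n+2}=0=\langle\Delta(f-1,\lambda),\Delta(-1,\mu)\rangle_n$, so the identity still holds.
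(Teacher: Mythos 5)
Your proof is correct and follows exactly the route the paper intends: the paper's proof of this theorem is the single line ``By Lemma~\ref{fgfunctor}(e), we have the following result immediately'', which when unpacked is precisely your chain $\G(\Delta(f-1,\lambda))=\Delta(f,\lambda)$ (part (b) at level $n$), $\F(\Delta(\ell,\mu))=\Delta(\ell-1,\mu)$ (part (c) at level $n+2$), plugged into the adjunction of part (e). Your extra care in checking $(f-1,\lambda)\in\Lambda^+_{r,n}$ and the $\ell=0$ boundary case is sound bookkeeping that the paper leaves implicit.
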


\section{A criterion on $\W_{r, n}$ being semisimple}
 In this section, we consider $\W_{r, n}$ over a field $F$.  The main purpose of this section is to give a
necessary and sufficient condition for $\W_{r, n}$ being semisimple
over $F$.

In Propositions~\ref{empty}-\ref{ss1}, we assume
   $o(q^2)>n$ and
  $|d|\geq n$ whenever $u_iu_j^{-1}-q^{2d}=0$ and $d\in \mathbb
   Z$. So, $\H_{r, n}$ is semisimple over $F$~\cite{A:semi}.
By  Theorem~\ref{line1111}, we describe explicitly when $\det G_{1,
\lambda}\neq 0$ for all $\lambda\in\Lambda_n$ where $\Lambda_n$ is
defined in Definition~\ref{lamb}.

\begin{Prop} \label{empty} $G_{1,\emptyset}\neq 0$ if and only if the  following
conditions hold:
\begin{enumerate} \item $u_iu_j-1\neq 0$ for all $1\le i\neq j\le r$,
\item  $u_i\not\in \{-\epsilon q, \epsilon q^{-1}\} $ if  $2\nmid r$ and
$\varrho^{-1}=\epsilon\prod\limits_{i=1}^ru_i$.
\item $u_i\not\in \{-q^\epsilon, q^\epsilon \}$ if  $2|r$ and
 $\varrho^{-1}=\epsilon q^{-\epsilon}\prod\limits_{i=1}^ru_i$.
 \end{enumerate}
   \end{Prop}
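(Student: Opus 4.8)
The plan is to specialise the closed formula of Theorem~\ref{line1111} to $\lambda=\emptyset$ and to read off which factors can vanish. Observe first that $G_{1,\emptyset}$ is the Gram determinant of the cell module $\Delta(1,\emptyset)$ of $\W_{r,2}$, since $(1,\emptyset)\in\Lambda^+_{r,n}$ forces $\emptyset\in\Lambda_r^+(n-2)$ and hence $n=2$; so Theorem~\ref{line1111} applies with $\lambda=\emptyset$. I would then record the relevant combinatorics: $\mathscr R(\emptyset)=\emptyset$ and $\mathscr A(\emptyset)=\{p_s:=(s,1,1)\mid 1\le s\le r\}$ with $c_\emptyset(p_s)=u_s$. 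For each $s$ the multipartition $\emptyset\cup p_s\cup p_s^+$ (resp. $\emptyset\cup p_s\cup p_s^-$) is the one whose $s$-th component is $(2)$ (resp. $(1,1)$) and whose other components are empty, while for $s\ne t$ the multipartition $\emptyset\cup p_s\cup p_t$ has a single box in each of components $s$ and $t$. Since $\H_{r,2}$ is semisimple under the standing hypothesis of this section, $\dim\Delta(0,\mu)=\#\Std(\mu)$, so these three families of dimensions are $1$, $1$ and $2$ respectively; in particular $\emptyset\cup p_s\cup p_s$ is not a multipartition and contributes nothing.

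Substituting this data into the definition of $B$ in Theorem~\ref{line1111} gives $B=\prod_{s=1}^r(u_s-\epsilon q^{-1})(u_s+\epsilon q)$ when $2\nmid r$ and $\varrho^{-1}=\epsilon\prod_i u_i$, and $B=\prod_{s=1}^r(u_s-q^\epsilon)(u_s+q^\epsilon)$ when $2\mid r$ and $\varrho^{-1}=\epsilon q^{-\epsilon}\prod_i u_i$ (matching $\epsilon=1$ with $\varrho^{-1}=q^{-1}\prod u_i$ and $\epsilon=-1$ with $\varrho^{-1}=-q\prod u_i$); and the remaining product in~(\ref{line1}) becomes a power product of the factors $u_su_t-1$ over pairs $s\ne t$, each exponent being the positive integer $\dim\Delta(0,\emptyset\cup p_s\cup p_t)=2$. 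Thus, writing $\det G_{1,\emptyset}=A\cdot B\cdot C$ as in~(\ref{line1}) with $A\in R_1$ and $C$ the last product, $B$ vanishes exactly when some $u_s$ equals one of $\epsilon q^{-1}$, $-\epsilon q$ (odd $r$), resp. $q^\epsilon$, $-q^\epsilon$ (even $r$), and $C$ vanishes exactly when $u_su_t=1$ for some $s\ne t$.

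The one step needing care is showing that the factor $A\in R_1$ is nonzero in $F$. Theorem~\ref{line1111} is stated over $R=\Z[u_1^\pm,\dots,u_r^\pm,q^\pm,\delta^{-1}]$; specialising along the ring homomorphism $R\to F$ (under which cell modules, and hence Gram determinants, base-change) it suffices to see that the image of $A$ is nonzero. Now $R_1$ is generated as a multiplicative semigroup by $1$, the $u_i^{\pm1}$, $q^{\pm1}$, $\delta^{\pm1}$ and the elements $u_iu_j^{-1}-q^{2d}$ with $|d|<n$; under the running assumptions of Propositions~\ref{empty}--\ref{ss1} each of $u_i$, $q$, $\delta=q-q^{-1}$ is nonzero in $F$ by hypothesis, while $u_iu_j^{-1}-q^{2d}\ne0$ for $|d|<n$ is precisely the assumption that $|d|\ge n$ whenever $u_iu_j^{-1}=q^{2d}$; hence the image of $A$ in $F$ is nonzero. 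Therefore $\det G_{1,\emptyset}\ne0$ in $F$ if and only if $B\ne0$ and $C\ne0$, which by the previous paragraph is exactly the conjunction of conditions (1), (2) and (3). I expect no genuine difficulty beyond the bookkeeping of which $\emptyset\cup p\cup\tilde p$ are multipartitions and the matching of the two sign conventions for $B$ with the given cases for $\varrho^{-1}$; the only input from Ariki--Koike theory is the trivial dimension count for multipartitions of $2$.
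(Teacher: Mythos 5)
Your proposal is correct, and it does exactly what the paper expects: the paper states Propositions~\ref{empty}--\ref{line234} without explicit proof, as direct specialisations of Theorem~\ref{line1111}, and your write-up supplies precisely those details — $\mathscr R(\emptyset)=\emptyset$, $\mathscr A(\emptyset)=\{(s,1,1):1\le s\le r\}$ with $c_\emptyset(p_s)=u_s$, the dimension counts $r_{\emptyset,p_s,p_s^\pm}=1$ and $\dim\Delta(0,\emptyset\cup p_s\cup p_t)=2$, and the resulting factorisation $\det G_{1,\emptyset}=A\cdot B\cdot C$ with $B$ carrying the parameter-dependent zeros from $Q_{r,\varrho}$ and $C$ carrying the factors $u_su_t-1$. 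Your matching of the two sign conventions in $B$ to the cases $\varrho^{-1}=\epsilon q^{-\epsilon}\prod u_i$ is also correct.

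The one place worth flagging is your non-vanishing argument for $A$. You invoke the running hypothesis of Section~6 to conclude that every generator of the multiplicative semigroup $R_1$ maps to a nonzero element of $F$. As literally stated, the paper's definition of $R_1$ allows $i=j$ and $d=0$ in the generators $u_iu_j^{-1}-q^{2d}$, which would make $0$ a generator; this is clearly an oversight in the paper, and the intended generators (the ones that actually arise as factors of the scalars $C,D$ in the proof of Theorem~\ref{line1111}) are all nonzero under the running hypothesis, exactly as you say. So your argument is sound modulo this cosmetic issue in the paper's definition, which you cannot be faulted for.
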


\begin{Prop}\label{line23}
 Suppose that $n\ge 3$. Let
$\lambda\in\Lambda_r^+(n-2)$ with  $\lambda^{(m)}=(n-2)$  for some
positive integer $ m\le r$. $\det G_{1, \lambda}\neq 0$ if and only
if the following conditions hold:\begin{enumerate}
\item $u_m\not\in \{q^{3-n}, -q^{3-n}\}$,
\item  $u_iu_m\not\in \{q^{4-2n},
q^2\}$, for all $1\le i\le r$ and $i\neq m$
\item  $u_iu_j\neq 1$ for all $m\not\in\{i, j\}$ and $i\neq j$.
\item $u_m\not\in \{-\epsilon q^3, \epsilon q^{3-2n}, \epsilon q\} $
and $u_i\not\in \{-\epsilon q, \epsilon q^{-1}\}$ for all $i\neq m$
if $2\nmid r$  and $\varrho^{-1}=\epsilon\prod_{j=1}^ru_j$.
\item $u_m\not\in \{-q^{3}, q^3\}$
and $u_i\not\in \{q, -q\}$ for all $i\neq m$ if $2\mid r$ and
$\varrho^{-1}=q^{-1}\prod_{j=1}^ru_j$.
\item $u_m\not\in \{-q^{3-2n}, q^{3-2n}, -q, q\}$  and $u_i\not\in
\{q^{-1}, -q^{-1}\}$ if $2\mid r$ and
$\varrho^{-1}=-q\prod_{j=1}^ru_j$.
\end{enumerate}
\end{Prop}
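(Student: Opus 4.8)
The plan is to specialise Theorem~\ref{line1111} to this particular $\lambda$ and then decide, factor by factor, when the right-hand side of (\ref{line1}) is nonzero. Since $|\lambda|=n-2=|\lambda^{(m)}|$, we have $\lambda^{(j)}=\emptyset$ for all $j\neq m$, so by (\ref{content}) the addable nodes of $\lambda$ and their $\lambda$-contents are $a=(m,1,n-1)$ with $c_\lambda(a)=u_mq^{2n-4}$, $b=(m,2,1)$ with $c_\lambda(b)=u_mq^{-2}$, and $e_j=(j,1,1)$ with $c_\lambda(e_j)=u_j$ for each $j\neq m$. Under the standing hypotheses of this section the element $A\in R_1$ appearing in (\ref{line1}) is a product of factors $u_i^{\pm1}$, $q^{\pm1}$, $\delta^{\pm1}$ and $u_iu_j^{-1}-q^{2d}$ with $|d|<n$, every one of which is a unit of $F$; hence $A\neq0$. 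Likewise every exponent $\dim\Delta(0,\lambda\cup p\cup\tilde p)$ and $r_{\lambda,p,p^{\pm}}$ in (\ref{line1}) is a positive integer, being the dimension of a cell module of the semisimple algebra $\H_{r,n}$ or $\H_{r,n-1}$ and so computable by (\ref{rankf}). Consequently $\det G_{1,\lambda}\neq0$ if and only if $B\neq0$ and $c_\lambda(p)c_\lambda(\tilde p)\neq1$ for every pair $\{p,\tilde p\}\subseteq\mathscr A(\lambda)$ for which $\lambda\cup p\cup\tilde p$ is a multipartition of $n$.

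Those pairs are $\{a,b\}$, $\{a,e_j\}$, $\{b,e_j\}$, and $\{e_i,e_j\}$ with $i\neq j$ and $i,j\neq m$; inserting the contents above, the conditions $c_\lambda(p)c_\lambda(\tilde p)\neq1$ become $u_m\neq\pm q^{3-n}$, $u_mu_j\neq q^{4-2n}$, $u_mu_j\neq q^2$ and $u_iu_j\neq1$, which are precisely items (1)--(3). For $B$ I would split into the three admissible forms of $\varrho^{-1}$ exactly as in Theorem~\ref{line1111}, and in each case list the valid growths $\lambda\cup p\cup p^+\in\Lambda_r^+(n)$ (for $p=a$ always, for $p=b$ only when $n\ge4$, for $p=e_j$) and $\lambda\cup p\cup p^-\in\Lambda_r^+(n)$ (for $p=b$ and $p=e_j$; note $\lambda\cup a\cup a^-$ is never a multipartition). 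Substituting $c_\lambda(a)$, $c_\lambda(b)$, $c_\lambda(e_j)$ into the displayed product for $B$ and requiring each factor to be nonzero then reproduces items (4), (5) and (6) in the three cases. Putting the two paragraphs together gives the claimed equivalence.

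The step I expect to be the main obstacle is the combinatorial bookkeeping: one must determine exactly which of the two-box growths $\lambda\cup p\cup\tilde p$, $\lambda\cup p\cup p^+$, $\lambda\cup p\cup p^-$ are genuine multipartitions of $n$ (this is where the hypothesis $n\ge3$ enters and where the smallest values of $n$ have to be checked separately), and one must carry the three possibilities for $\varrho^{-1}$ through Theorem~\ref{line1111} at once so that the resulting inequalities line up verbatim with items (4)--(6). Once the contents $c_\lambda(p)$ recorded above are in hand, everything else is routine substitution into (\ref{line1}) together with the dimension formula (\ref{rankf}).
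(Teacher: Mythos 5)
Your plan is the correct and essentially the only route: the paper gives no separate proof of this proposition, only Theorem~\ref{line1111}, and your identification of the addable nodes $a=(m,1,n-1)$, $b=(m,2,1)$, $e_j=(j,1,1)$ together with their $\lambda$-contents $u_mq^{2n-4}$, $u_mq^{-2}$, $u_j$, the nonvanishing of $A$ and of the dimension exponents under the section's standing hypotheses, and the translation of the pair-product into items (1)--(3) and of $B$ into items (4)--(6) is all sound. The bookkeeping you flag is indeed where the care must go: for $n\ge 4$ every expected two-box growth is a genuine multipartition, but for $n=3$ one has $\lambda^{(m)}=(1)$ and $\lambda\cup b\cup b^{+}$ falls out of $\Lambda_r^+(n)$, so $B$ contributes no factor yielding $u_m\ne\epsilon q$ (resp.\ $u_m\ne\pm q$ in the even-$r$ cases) --- a small edge-case discrepancy with the proposition as stated that traces to the paper, not to any defect in your method.
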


\begin{Lemma} \label{line234} Suppose that $n\ge 3$. Let $\epsilon=\pm 1$. Let
$\lambda\in\Lambda_r^+(n-2)$ with $\lambda^{(m)}=(1^{n-2})$. $\det
G_{1, \lambda}\neq 0$ if and only if  the following conditions hold:
\begin{enumerate} \item
$u_m\not\in \{q^{n-3}, -q^{n-3}\}$,
\item  $u_iu_m\not\in \{q^{2n-4},
q^{-2}\}$, for all $1\le i\le r$ and $i\neq m$
\item  $u_iu_j\neq 1$ for all $m\not\in\{i,j\}$ and $i\neq j$.
\item $u_m\not\in \{\epsilon q^{-3}, -\epsilon q^{2n-3}, -\epsilon q^{-1}\} $
and $u_i\not\in \{-\epsilon q, \epsilon q^{-1}\}$ for all $i\neq m$
if $2\nmid r$  and $\varrho^{-1}=\epsilon\prod_{j=1}^ru_j$.
\item $u_m\not\in \{ -q^{2n-3}, q^{2n-3}  -q^{-1}, q^{-1}\}$
and $u_i\not\in \{q, -q\}$ for all $i\neq m$ if $2\mid r$ and
$\varrho^{-1}=q^{-1}\prod_{j=1}^ru_j$.
\item $u_m\not\in \{-q^{-3}, q^{-3}\}$  and $u_i\not\in
\{q^{-1}, -q^{-1}\}$ if $2\mid r$ and
$\varrho^{-1}=-q\prod_{j=1}^ru_j$.
\end{enumerate}
\end{Lemma}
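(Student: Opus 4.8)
The plan is to read off both sides of the claimed equivalence from Theorem~\ref{line1111}, applied to the multipartition $\lambda$ all of whose components vanish except $\lambda^{(m)}=(1^{n-2})$. Under the standing hypotheses of this section ($o(q^2)>n$ and $|d|\ge n$ whenever $u_iu_j^{-1}=q^{2d}$), every generator of $R_1$ is a nonzero element of $F$, so the factor $A\in R_1$ in~(\ref{line1}) is a unit; hence $\det G_{1,\lambda}\ne 0$ if and only if $B\ne 0$ and every factor $c_\lambda(p)c_\lambda(\tilde p)-1$ with $\lambda\cup p\cup\tilde p\in\Lambda_r^+(n)$ is nonzero, the corresponding exponent $\dim\Delta(0,\lambda\cup p\cup\tilde p)$ being positive. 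What remains is the combinatorics of the addable nodes of this particular $\lambda$.

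First I would list the addable nodes together with their contents from~(\ref{content}): the foot of the column $q_1=(m,n-1,1)$ with $c_\lambda(q_1)=u_mq^{2(2-n)}$; the node $q_2=(m,1,2)$ to the right of the top box with $c_\lambda(q_2)=u_mq^2$; and, for each $j\ne m$, the node $q_j=(j,1,1)$ at the top of the empty $j$th component with $c_\lambda(q_j)=u_j$. The only removable node is $(m,n-2,1)$, and it does not enter~(\ref{line1}). Among distinct addable nodes, the pairs $\{p,\tilde p\}$ for which $\lambda\cup p\cup\tilde p\in\Lambda_r^+(n)$ are $\{q_1,q_2\}$, $\{q_1,q_j\}$ and $\{q_2,q_j\}$ for $j\ne m$, and $\{q_j,q_k\}$ for distinct $j,k\ne m$; the corresponding products of contents are $u_m^2q^{2(3-n)}$, $u_mu_jq^{2(2-n)}$, $u_mu_jq^2$ and $u_ju_k$. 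Since none of $u_m^2q^{2(3-n)}-1$, $u_mu_jq^{2(2-n)}-1$, $u_mu_jq^2-1$, $u_ju_k-1$ lies in $R_1$, the requirement that all of them be nonzero is exactly conditions~(1)--(3).

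Next I would analyse $B$ case by case. When $2\nmid r$ and $\varrho^{-1}=\epsilon\prod_{i}u_i$: the node $q_1$ contributes nothing to the $p^+$-product (the box $q_1^+$ is not addable once $q_1$ has been adjoined), while $q_2$ and each $q_j$ contribute $c_\lambda(q_2)-\epsilon q^{-1}$ and $c_\lambda(q_j)-\epsilon q^{-1}$; to the $p^-$-product $q_1$ contributes $c_\lambda(q_1)+\epsilon q$, each $q_j$ contributes $c_\lambda(q_j)+\epsilon q$, and $q_2$ contributes $c_\lambda(q_2)+\epsilon q$ only when $n\ge 4$ (for $n=3$ one has $(1^{n-2})=(1)$ and $q_2^-$ is not addable). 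Setting each of these factors to zero and solving for $u_m$ and $u_j$ reproduces condition~(4). The even-$r$ cases are the same in spirit: if $\varrho^{-1}=q^{-1}\prod_i u_i$ then $B$ is a product of factors $c_\lambda(p)^2-q^2$ over the nodes $p$ with $\lambda\cup p\cup p^-\in\Lambda_r^+(n)$, namely $q_1$, the $q_j$, and $q_2$ when $n\ge 4$, which yields condition~(5); if $\varrho^{-1}=-q\prod_i u_i$ then $B$ is a product of factors $c_\lambda(p)^2-q^{-2}$ over the nodes $p$ with $\lambda\cup p\cup p^+\in\Lambda_r^+(n)$, namely $q_2$ and the $q_j$, which yields condition~(6). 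Combining the constraints coming from $B$ with those from the factors $c_\lambda(p)c_\lambda(\tilde p)-1$ gives the asserted equivalence.

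The main obstacle is the purely combinatorial bookkeeping of the last two steps: for the column shape $(1^{n-2})$ one must decide precisely which pairs $\{p,\tilde p\}$ of addable nodes and which pairs $(p,p^{\pm})$ give genuine multipartitions of $n$ --- this is where the argument diverges from the row case of Proposition~\ref{line23}, and where the degenerate value $n=3$, for which $(1^{n-2})=(n-2)=(1)$, must be checked to be consistent --- and one must confirm that, apart from the factors listed above, every remaining polynomial factor of $\det G_{1,\lambda}$ really does lie in $R_1$. Neither point is deep, but the signs and $q$-powers in the contents must be tracked carefully.
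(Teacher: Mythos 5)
Your proposal is correct and is precisely the intended argument: the paper supplies no separate proof for Lemma~\ref{line234}, merely remarking that Propositions~\ref{empty}--\ref{line234} are read off from Theorem~\ref{line1111}, and you carry out exactly that computation. The identification of the addable nodes of the column shape $(1^{n-2})$, their contents $u_mq^{4-2n}$, $u_mq^2$ and $u_j$, the list of admissible pairs $\{p,\tilde p\}$ and of nodes $p$ contributing to each piece of $B$, and the resulting non-vanishing conditions all check out against the statement. You are also right to flag the degenerate case $n=3$: there $q_2^-$ is not addable, so the factor $c_\lambda(q_2)+\epsilon q$ (resp.\ $c_\lambda(q_2)^2-q^2$) is absent from $B$ and the constraint $u_m\neq-\epsilon q^{-1}$ (resp.\ $u_m\neq\pm q^{-1}$) does not in fact arise from $\det G_{1,\lambda}$; the same phenomenon occurs symmetrically in Proposition~\ref{line23} at $n=3$, where the two statements overlap on $\lambda^{(m)}=(1)$ but list non-matching ``extra'' points. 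So for $n=3$ the ``only if'' direction of the stated equivalence requires a separate justification or a caveat, and it would strengthen your write-up to say this explicitly rather than leaving it as ``must be checked to be consistent.''
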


\begin{Defn}\label{lamb}Fix positive  integers  $r$ and $n$. let
$$
\Lambda_n=\bigcup_{k=2}^n \{\lambda\in \Lambda^+_{r}(k-2)\mid
\lambda^{(i)}\in \{(k-2), (1^{k-2})\}\text{ for some $i, 1\le i\le
r$}\}$$
\end{Defn}

\begin{Prop}\label{ss1} Suppose that $r\ge 2$ and $n\ge 2$.
\begin{enumerate} \item  Assume $\det G_{1, \emptyset}\neq 0$.  Then $\prod_{\lambda\in \Lambda_n}
\det G_{1,\lambda}\neq 0$ if and only if   $\B_{r,n}$ is (split)
semisimple over $F$.
\item  $\B_{r,n}$ is not semisimple over $F$  if
$\det G_{1, \emptyset}= 0$.
\end{enumerate}
\end{Prop}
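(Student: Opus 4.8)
The plan is to base both parts on the standard fact that a cellular algebra over a field is split semisimple exactly when the Gram determinant of each of its cell modules is non-zero \cite{GL}, combined with the recursive formula of Theorem~\ref{main}. Throughout, the blanket hypothesis of this section makes $\H_{r,m}$ semisimple for all $m\le n$, so $\det G_{0,\mu}\ne0$ for every $\mu$; it also forces two distinct addable or removable nodes of a multipartition of size $<n$ to have distinct contents, and the addable and removable contents within a single component to strictly interlace, so that no addable node and no removable node of the same component share a content. I will use these facts freely.

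First I would settle the easy implication of (a) together with (b). If $\W_{r,n}$ is semisimple then $\det G_{f,\lambda}\ne0$ for all $(f,\lambda)\in\Lambda^+_{r,n}$; since every $(l,\mu)\in\Lambda^+_{r,n-1}$ satisfies $(l,\mu)\rightarrow(f,\lambda)$ for some $(f,\lambda)\in\Lambda^+_{r,n}$, and $F$ is a domain, the product in Theorem~\ref{main} then gives $\det G_{l,\mu}\ne0$ for all $(l,\mu)\in\Lambda^+_{r,n-1}$, i.e. $\W_{r,n-1}$ is semisimple. Iterating, $\W_{r,k}$ is semisimple for every $2\le k\le n$, and hence $\det G_{1,\nu}\ne0$ for all $\nu\in\Lambda_n$ (in particular $\det G_{1,\emptyset}\ne0$, since $\emptyset\in\Lambda_n$). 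Read contrapositively this proves (b): if $\det G_{1,\emptyset}=0$ then $\W_{r,2}$ is not semisimple, and so neither is $\W_{r,n}$, because $\det G_{1,\emptyset}$ occurs with positive exponent among the factors obtained by iterating Theorem~\ref{main} on $\det G_{f_0,\lambda_0}$ for a suitable $(f_0,\lambda_0)\in\Lambda^+_{r,n}$ down to level $2$ along a recursion path ending at $(1,\emptyset)$.

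The substance is the remaining implication of (a): assuming $\det G_{1,\nu}\ne0$ for all $\nu\in\Lambda_n$, I want $\W_{r,n}$ semisimple, and I would argue by induction on $n$, the base case $n=2$ being immediate ($\Lambda_2=\{\emptyset\}$). Since $\Lambda_{n-1}\subseteq\Lambda_n$, the hypotheses descend to $n-1$, so by induction $\W_{r,n-1}$ is semisimple and $\det G_{l,\mu}\ne0$ for all $(l,\mu)\in\Lambda^+_{r,n-1}$; by Theorem~\ref{main} it then suffices to show $\gamma_{\lambda/\mu}\ne0$ for every $(f,\lambda)\in\Lambda^+_{r,n}$ with $f\ge1$ and every $(l,\mu)\rightarrow(f,\lambda)$. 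For $l=f$ (so $\mu$ is $\lambda$ with a removable node deleted) I would read $\gamma_{\lambda/\mu}$ off Proposition~\ref{up1}: its scalar prefactor is a unit, and neither its numerator nor its denominator can vanish, since that would force respectively an addable and a removable node, or two distinct removable nodes, of $\lambda$ to share a content. Hence $\gamma_{\lambda/\mu}\ne0$ in this case unconditionally.

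The hard case is $l=f-1$, where $\mu=\lambda\cup\{p\}$ for an addable node $p$ of $\lambda$ and $\gamma_{\lambda/\mu}$ is computed by Proposition~\ref{down1} or Proposition~\ref{down2}. Each factor occurring there is of one of two kinds: (i) a factor $[a]_{q^2}$ with $0<a<n$, or $u_iu_j^{-1}-q^{2d}$ with $|d|<n$, which is non-zero under the blanket hypothesis; or (ii) a factor of the form $c_\mu(x)-c_\mu(y)^{-1}$, forcing $c_\mu(x)c_\mu(y)=1$, together with the $E$-factor evaluated via (\ref{ekodd})--(\ref{ekeven}), which vanishes only if $c(x)c(y)=1$ for two addable or removable nodes appearing at one step of the relevant updown tableau, or $c(x)\in\{-\epsilon q,\epsilon q^{-1}\}$ (respectively $c(x)^2=q^{\pm2}$ when $r$ is even) for such a node $x$. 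I would then observe that the kind-(ii) conditions are precisely the non-unit factors of the determinants $\det G_{1,\nu}$, $\nu\in\Lambda_n$: by Theorem~\ref{line1111} the scalar $A$ there lies in $R_1$ and so is non-zero under the blanket hypothesis, while the factor $B$ collects the conditions $c_\nu(p)\in\{-\epsilon q,\epsilon q^{-1}\}$ (or $c_\nu(p)^2=q^{\pm2}$) and the product $\prod(c_\nu(p)c_\nu(\tilde p)-1)$ collects the content-product conditions. The reason for taking $\nu$ of the extremal shapes $(k-2)$ and $(1^{k-2})$ for all $2\le k\le n$ is that, as $k$ varies, their addable nodes realize every content, content-product, and value $\pm q^{\pm1}$ that can occur among the nodes entering the formulae of Propositions~\ref{up1}--\ref{down2} at level $n$; this matching is exactly what Propositions~\ref{empty}, \ref{line23} and Lemma~\ref{line234} make explicit. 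Granting it, the hypotheses force every $\gamma_{\lambda/\mu}\ne0$, hence every $\det G_{f,\lambda}\ne0$, hence $\W_{r,n}$ is semisimple. The main obstacle is exactly this combinatorial bookkeeping — verifying that $\Lambda_n$ is neither too small (it catches every content condition that can destroy semisimplicity) nor produces an irrelevant one — which is the content of Theorem~\ref{line1111} and Propositions~\ref{empty}--\ref{line234}.
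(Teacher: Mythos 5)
Your approach is genuinely different from the paper's. You run both directions of part~(a), and part~(b), through the Gram-determinant recursion of Theorem~\ref{main} (descending for the ``easy'' direction, ascending for the ``hard'' one), whereas the paper never touches Theorem~\ref{main} in this proof at all: it works with the functors $\F,\G$ of Section~5 and the adjunction in Theorem~\ref{cons}, passing between radicals of cell modules at levels $n$, $n-1$, $n-2$. The paper's argument for ``$\W_{r,n}$ semisimple $\Longrightarrow$ $\prod\det G_{1,\lambda}\neq 0$'' really does consist in locating, for each offending $\lambda\in\Lambda_{n-1}$, a witness shape $\tilde\lambda$ at level $n$ with $\det G=0$, through a parity analysis and the explicit conditions in Propositions~\ref{line23} and Lemma~\ref{line234}; and the other direction is a contradiction argument that pushes a simple submodule of a radical down by $\F$ to a smaller algebra.

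There are two places where what you wrote does not yet constitute a proof. First, in the ``easy'' implication you deduce $\det G_{l,\mu}\neq 0$ for all $(l,\mu)\in\Lambda^+_{r,n-1}$ from $\det G_{f,\lambda}\neq 0$ and $F$ being a domain, using the factorisation of Theorem~\ref{main}. But Theorem~\ref{main} is an identity over the generic ring $R$ in which the $\gamma_{\lambda/\mu}$ live in $\operatorname{Frac}(R)$, not in $R$; after specialising $R\to F$ a factor $\gamma_{\lambda/\mu}$ can a priori acquire a pole exactly where a $\det G_{l,\mu}$ acquires a zero, so the product can stay non-zero while a factor vanishes. You would need to argue that the $\gamma$'s cannot blow up at the $F$-point (or, as the paper does, avoid the issue by using Theorem~\ref{cons} to climb from a non-zero $\operatorname{Hom}$ at level $n-1$ or $n-2$ up to level $n$).

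Second, and this is the essential point, in the ``hard'' implication you reduce everything to showing that the non-unit factors of the $\gamma_{\lambda/\mu}$ coming from Propositions~\ref{down1}--\ref{down2} (in particular the factors hidden inside $E_{\tt\tt}(n-1)$ and $E_{\vv\vv}(a)$, which depend on the full boundary of an \emph{arbitrary} multipartition $\lambda$) are all among the non-unit factors of $\det G_{1,\nu}$ for the \emph{extremal} shapes $\nu\in\Lambda_n$. You assert this matching and credit it to Theorem~\ref{line1111} and Propositions~\ref{empty}--\ref{line234}, but those results only compute $\det G_{1,\nu}$ for such $\nu$; they do not prove that every content-pair condition $c(x)c(y)=1$ arising on the boundary of an arbitrary multipartition of size $\le n-2$ is already realised on the boundary of some one-row or one-column shape of size $\le n-2$. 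That realisability statement is precisely where the paper does its combinatorial work (the parity split and the construction of $\tilde\lambda$ in the last part of its proof), and it needs to be supplied. As it stands your plan sketches the right bookkeeping target but leaves the decisive combinatorial step unjustified.
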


\begin{proof}  By Propositions~\ref{empty}--\ref{line234},
$\prod_{\lambda\in \Lambda_n\setminus\Lambda_{n-1}} \det G_{1,
\lambda}=0$ if $\det G_{1, \emptyset}=0$. This proves (b).

 We are going to prove (a) by
induction on $n$. When $n=2$, there is nothing to be proved. We
assume  $n\ge 3$ in the remainder of the proof.

In \cite{GL}, Graham and Lehrer proved that a cellular algebra is
(split) semisimple if and only if  no Gram determinant associated to
a cell module which is defined by a cellular basis is equal to zero.
We use it frequently in the proof of this proposition.

 $(\Longrightarrow)$  If   $\W_{r,n}$ is not semisimple, then  $\det
G_{f,\lambda}=0$ for some $(f,\lambda)\in \Lambda^+_{r, n}$. Under
our assumption, $\H_{r, n}$ is semisimple. Since each cell module
$\Delta(0, \lambda)$ for $\W_{r, n}$ can be considered as the cell
module of $\H_{r, n}$ with respect to $\lambda$. So,  $\det G_{0,
\lambda}\neq 0$ for all $\lambda\in \Lambda_r^+(n)$. Therefore, we
can assume that $f>1$.

Take an irreducible module $D^{\ell, \mu}\subset \Rad \Delta(f,
\lambda)$. By general theory  about cellular algebras, we know that
$\ell\le f$. When $\ell>1$, we  use Theorem~\ref{cons} to get a
non-zero $\W_{r, n-2}$-homomorphism from $\Delta(\ell-1, \mu)$ to $
\Delta(f-1, \lambda)$. So, $\W_{r, n-2}$ is not semisimple. This
contradicts to our assumption since $\Lambda_{n-2}\subset
\Lambda_n$. If $\ell=0$, then there is a non-zero homomorphism from
$\text{Ind}_{\W_{r, n-1}}\Delta(0, \mu/p)$ to $\Delta(f, \lambda)$
where $p$ is a removable node of $\mu$ and $\mu/p$ is obtained from
$\mu$ by removing the removable node $p$. Here we use classical
branching rule for $\Delta(0, \mu/p)$ since we are assuming that
$\H_{r, n}$ is semisimple. By Theorem~\ref{cellf}, there is a $(k,
\alpha)\in \Lambda_{r, n-1}$ with $(k, \alpha)\rightarrow (f,
\lambda)$ such that $\Delta(0, \mu/p)$ is a composition factor of $
\Delta(k, \alpha)$. Since we are assuming that $f>1$, $k\geq f-1>0$.
So, $(0, \mu/p)\neq (k, \alpha)$. Therefore, $\W_{r, n-1}$ is not
semisimple. This contradicts our induction assumption again.

 $(\Longleftarrow)$ By assumption, $\det G_{1, \lambda}\neq 0$ for
 all $\lambda\in \Lambda_n\setminus\Lambda_{n-1}$.
Suppose that $\det G_{1, \lambda}=0$ for $\lambda\in \Lambda_{n-1}$.
We can find an irreducible module $D^{\ell, \mu}\subset \Rad
\Delta(1, \lambda)$. We have $\ell=0$. Otherwise, since $\ell\le 1$,
we have $\ell=1$.  By Theorem~\ref{cons}, $\lambda=\mu$, a
contradiction.

If $n-2-|\lambda|=2a$ for some  $a\in \mathbb N$, we can use
Theorem~\ref{cons} to get a non-zero homomorphism from $\Delta(a,
\mu)$ to $\Delta(1+a, \lambda)$. So, $\det G_{1+a, \lambda}=0$,
forcing $\W_{r, n}$ not being semisimple, a contradiction.

Suppose  $n-2-|\lambda|$ is odd.  By Theorem~\ref{line1111}, we can
find a suitable multipartition, say $\tilde\lambda$ which is
obtained from $\lambda$ by adding an addable node, such that  $\det
G_{1, \tilde \lambda}=0$. First,   we  assume that $\lambda\in
\Lambda_r^+(k-2)$ with $\lambda^{(m)}=k-2$ and $k\le n-1$ without
loss of generality. By Proposition~\ref{line23}, either $u_i\in
\{q^a, -q^b\} $ or $u_iu_j=q^c$ for some $1\le i\neq j\le r$ and
some integers $a, b, c$. In the first case, we add a box on
$\lambda^{(j)}$ with $j\neq i$. In the remainder case, we define
$\tilde \lambda^{(m)}=(k-2, 1)$ (resp. $\tilde\lambda^{(m)}=(k-1)$)
if $u_i u_m=q^{4-2k}$ (resp. otherwise).
 In each case,  $\tilde\lambda\in \Lambda_r^+(k-1)$ and
 $\det G_{1, \tilde
\lambda}=0$. Since $n-2-|\tilde\lambda|$ is a non-negative even
number, we get a contradiction by our previous arguments.

By similar arguments, we get a contradiction if we assume
$\lambda\in \Lambda_r^+(k-2)$. We leave the details to the reader.
\end{proof}

For convenience, we define
\begin{equation} Q_{r,\varrho}=\begin{cases}
\{-\epsilon q, \epsilon q^{-1}\}, &\text{if
$2\nmid r, \varrho^{-1}=\epsilon \prod\limits_{i=1}^r u_i$, }\\
\{- q^{\epsilon},q^{\epsilon}\}, &\text{if
$2| r, \varrho^{-1}=\epsilon q^{-\epsilon} \prod\limits_{i=1}^ru_i$, }\\
\end{cases}
\end{equation}

and

 {\small \begin{equation} S_{r,\varrho}=\begin{cases}
\cup_{k=3}^n \{\pm q^{3-k}, \pm q^{k-3}, \epsilon q^{3-2k},,
-\epsilon q^{2k-3}\}, &\text{if
$2\nmid r, \varrho^{-1}=\epsilon \prod\limits_{i=1}^ru_i$, }\\
\cup_{k=3}^n \{\pm q^{3-k}, \pm q^{k-3},  \pm q^{(2k-3)\epsilon} \},
&\text{if
$2| r, \varrho^{-1}=\epsilon q^{-\epsilon}\prod\limits_{i=1}^ru_i$. }\\
\end{cases}
\end{equation}}

\begin{Theorem}\label{semisimple} Let $n\ge 2$ and $r\ge 2$.
 Let $\W_{r, n}$ be defined over the
field $F$ which contains non-zero $u_i, 1\le i\le r$, $q, q-q^{-1}$
such that the assumption~\ref{admiss} holds.
\begin{enumerate} \item
If either $u_i-u_j^{-1}=0$ for different  positive integers $i, j\le
r$ or $u_i\in Q_{r, \varrho}$ for some positive integer $i\le r$,
then $\W_{r,n}$ is not semisimple.
\item  Assume $u_i-u_j^{-1}\neq 0$ for all  different  positive integers $i, j\le
r$ and $u_i\not\in Q_{r,\varrho}$ for all  positive integers $ i \le
r$.
\begin{itemize} \item[(1)] $\W_{r,2}$ is
semisimple if and only if  $o(q^2)>2$ and $|d|\ge 2$ whenever
$u_iu_j^{-1}=q^{2d}$ for any $1\le i < j\le r$ and $d\in \mathbb Z$.

 \item[(2)] Suppose  $n \ge 3$ . Then
$\W_{r,n}$ is semisimple if and only if \begin{itemize}\item [(a)]
$o(q^2)>n$, \item [(b)] $|d|\ge n$ whenever $u_iu_j^{-1}=q^{2d}$ for
any $1\le i < j\le r$ and $d\in \mathbb Z$, \item[(c)] $u_i\not\in
S_{r,\varrho}$,
\item [(d)] $u_iu_j \not\in \cup_{k=3}^n\{q^{4-2k},q^{2k-4}\}$ for all different
positive integers $i, j\le r$.
\end{itemize}
\end{itemize}
\end{enumerate}
\end{Theorem}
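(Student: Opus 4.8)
The plan is to reduce everything to the Graham--Lehrer theorem \cite{GL} that a cellular algebra is split semisimple if and only if no Gram determinant of a cell module vanishes, combined with the semisimplicity criterion for Ariki--Koike algebras \cite{A:semi}, and then to feed in the three explicit determinant computations of Proposition~\ref{empty}, Proposition~\ref{line23} and Lemma~\ref{line234} through Proposition~\ref{ss1}. The one extra observation we need is that $\W_{r,n}$ surjects onto $\W_{r,n}/\Ef_n\cong\H_{r,n}$: since the cell modules $\Delta(0,\lambda)$ of $\W_{r,n}$ are precisely the cell modules of $\H_{r,n}$, if $\W_{r,n}$ is semisimple then all $\det G_{0,\lambda}$ are nonzero, so $\H_{r,n}$ is semisimple, and hence $o(q^2)>n$ and $|d|\ge n$ whenever $u_iu_j^{-1}=q^{2d}$. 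This is exactly what places us inside the standing hypotheses of Propositions~\ref{empty}--\ref{ss1} whenever we invoke them.

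For part~(a), assume $u_i=u_j^{-1}$ for some $i\ne j$, or $u_i\in Q_{r,\varrho}$ for some $i$. If $\H_{r,n}$ is not semisimple, then neither is $\W_{r,n}$ by the preceding remark. If $\H_{r,n}$ is semisimple, the hypothesis precisely negates one of the conditions (1)--(3) of Proposition~\ref{empty}, so $\det G_{1,\emptyset}=0$, and then Proposition~\ref{ss1}(b) shows that $\W_{r,n}$ is not semisimple.

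For part~(b) we assume $u_i\ne u_j^{-1}$ for all $i\ne j$ and $u_i\notin Q_{r,\varrho}$ for all $i$; by Proposition~\ref{empty} this is equivalent to $\det G_{1,\emptyset}\ne0$. When $n=2$ we have $\Lambda_2=\{\emptyset\}$, so Proposition~\ref{ss1} says that $\W_{r,2}$ is semisimple if and only if $\H_{r,2}$ is, which by \cite{A:semi} amounts to condition~(1). When $n\ge3$, if $\W_{r,n}$ is semisimple then as above $\H_{r,n}$ is semisimple, giving conditions (a) and (b) and putting us in the setting of Proposition~\ref{ss1}(a); therefore $\W_{r,n}$ is semisimple if and only if $\det G_{1,\lambda}\ne0$ for every $\lambda\in\Lambda_n$. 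Since $\Lambda_n$ consists of $\emptyset$ together with, for each $3\le k\le n$, the multipartitions with a single nonempty component equal to $(k-2)$ or to $(1^{k-2})$, these non-vanishing conditions are read off from Proposition~\ref{empty}, Proposition~\ref{line23} and Lemma~\ref{line234}. Using (a) and (b) together with the hypotheses of part~(b) to dispose of all the conditions in those statements that involve the order of $q^2$, a product $u_iu_j=1$, or a single parameter lying in $Q_{r,\varrho}$, the remaining obstructions, taken over all $3\le k\le n$ and over all positions of the special component, are exactly $u_i\in S_{r,\varrho}$ and $u_iu_j\in\bigcup_{k=3}^n\{q^{4-2k},q^{2k-4}\}$, i.e. conditions (c) and (d).

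The only real work is this last bookkeeping. Proposition~\ref{line23} and Lemma~\ref{line234} are phrased one shape at a time, in terms of the local size $k$, the index $m$ of the special component, and the sign $\epsilon\in\{1,-1\}$ appearing in the admissibility normalization of $\varrho$; one must check carefully that taking their union over all relevant $k$ and $m$ reassembles precisely the sets $S_{r,\varrho}$ and $\bigcup_{k=3}^n\{q^{4-2k},q^{2k-4}\}$, with nothing gained or lost, and that the four cases ($n=2$; $2\nmid r$; $2\mid r$ with $\varrho^{-1}=q^{-1}\prod_{i=1}^r u_i$; $2\mid r$ with $\varrho^{-1}=-q\prod_{i=1}^r u_i$) are each matched correctly against $Q_{r,\varrho}$. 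Everything else is a formal application of the Graham--Lehrer criterion, Proposition~\ref{ss1}, and the quotient $\W_{r,n}\to\H_{r,n}$.
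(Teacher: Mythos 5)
Your proposal is correct and follows essentially the same route as the paper: reduce to the semisimplicity of $\H_{r,n}$ (via the cell modules $\Delta(0,\lambda)$ and Ariki's criterion), then feed the explicit Gram-determinant computations of Propositions~\ref{empty}, \ref{line23} and Lemma~\ref{line234} through Proposition~\ref{ss1} to translate the non-vanishing conditions into the stated parameter conditions. The bookkeeping you flag as needing care --- verifying that the union over $k$ and $m$ of the obstructions in Proposition~\ref{line23} and Lemma~\ref{line234} reassembles exactly $S_{r,\varrho}$ and $\bigcup_{k=3}^n\{q^{4-2k},q^{2k-4}\}$ --- is likewise left implicit in the paper's proof.
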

\begin{proof} Each cell module $\Delta(0, \lambda)$ for
$\lambda\in\Lambda_r^+(n)$ can be considered as the cell module of
$\H_{r, n}$. So, $\W_{r, n}$ is not semisimple over $F$ if $\H_{r,
n}$ is not semisimple. Therefore, we can assume $\H_{r, n}$ is
semisimple when we discuss the semisimplicity of $\W_{r, n}$. Now,
the result follows from Ariki's result on $\H_{r, n}$ being
semisimple in \cite{A:semi} together with
Propositions~\ref{empty}-\ref{ss1}.
\end{proof}

When $r=1$, Theorem~\ref{semisimple} has been proved in
\cite[5.9]{RS1}. We remark that  the notation $r$ (resp. $\omega$)
in \cite[1.1]{RS1} is the same as $\rho^{-1}$ (resp. $\delta$) in
the current paper.

\providecommand{\bysame}{\leavevmode ---\ } \providecommand{\og}{``}
\providecommand{\fg}{''} \providecommand{\smfandname}{and}
\providecommand{\smfedsname}{\'eds.}
\providecommand{\smfedname}{\'ed.}
\providecommand{\smfmastersthesisname}{M\'emoire}
\providecommand{\smfphdthesisname}{Th\`ese}

\end{document}